\def\thm@space@setup{%
  \thm@preskip=1cm plus .5cm minus .5cm
  \thm@postskip=.5cm plus .6cm minus .5cm 
}
\newtheorem{thm}{Theorem}
\newtheorem{lma}{Lemma}
\newtheorem{cor}{Corollary}
\newtheorem{rmk}{Remark}
\numberwithin{thm}{section}
\numberwithin{lma}{section}
\numberwithin{dfn}{section}
\numberwithin{cor}{section}
\numberwithin{rmk}{section}
\numberwithin{prop}{section}
\def\mfq{{\mathfrak q}}
\def\mfd{{\mathfrak d}}
\def\mfm{{\mathfrak m}}
\def\mfc{{\mathfrak C}}
\def\mfa{{\mathfrak A}}
\def\mcm{{\mathcal M}}
\def\mcp{{\mathcal P}}
\def\mfp{{\mathfrak p}}
\newcommand*{\thmref}[1]{Theorem~\ref{#1}}
\newcommand*{\lmaref}[1]{Lemma~\ref{#1}}
\title{On the distribution of the number of distinct generators of $h$-free and $h$-full elements in an abelian monoid}
\date{}
\begin{document}





  



\author[1]{Sourabhashis Das}
\affil[1]{Corresponding author, Department of Pure Mathematics, University of Waterloo, 200 University Avenue West, Waterloo, Ontario, Canada, N2L 3G1, \textit{s57das@uwaterloo.ca}.}
%
%
\author[2]{Wentang Kuo}
\affil[2]{Department of Pure Mathematics, University of Waterloo, 200 University Avenue West, Waterloo, Ontario, Canada, N2L 3G1, \textit{wtkuo@uwaterloo.ca}.}
\author[3]{Yu-Ru Liu}
\affil[3]{Department of Pure Mathematics, University of Waterloo, 200 University Avenue West, Waterloo, Ontario, Canada, N2L 3G1, \textit{yrliu@uwaterloo.ca}.}

%
%
%

\maketitle 

\begin{abstract}
This work introduces the first in-depth study of $h$-free and $h$-full elements in abelian monoids, providing a unified approach for understanding their role in various mathematical structures. Let $\mfm$ be an element of an abelian monoid, with $\omega(\mfm)$ denoting the number of distinct prime elements generating $\mfm$. We study the moments of $\omega(\mfm)$ over subsets of $h$-free and $h$-full elements, establishing the normal order of $\omega(\mathfrak{m})$ within these subsets. Our findings are then applied to number fields, global function fields, and geometrically irreducible projective varieties, demonstrating the broad relevance of this approach.
\end{abstract}

\section{Introduction}

For a natural number $n$, let the prime factorization of $n$ be given as
\begin{equation*}
n = p_1^{s_1} \cdots p_r^{s_r},
\end{equation*}
where $p_i$'s are its distinct prime factors and $s_i$'s are their respective multiplicities. Let $\omega(n)$ denote the total number of distinct prime factors in the factorization of $n$. Thus, $\omega(n) =r$. Let $h \geq 2$ be an integer. We say $n$ is $h$\textit{-free} if $s_i \leq h-1$ for all $i \in \{1, \cdots, r\}$, and we say $n$ is $h$\textit{-full} if $s_i \geq h$ for all $i \in \{1, \cdots, r\}$. The set of $h$-free and the set of $h$-full numbers are well-studied in the literature (see \cite{dkl2}, \cite{dkl4}, \cite{is}, \cite{jala}, and \cite{jala2}). The former has a density of $1/\zeta(h)$, where $\zeta(s)$ is the classical Riemann $\zeta$-function, and the latter is a zero-density subset of the natural numbers. Moreover, every natural number that is not an $h$-free or an $h$-full number can be written uniquely as a product of two co-prime numbers, one being $h$-free, and the other being $h$-full. Thus, to have a more refined understanding of the distributions of $\omega(n)$, we study its corresponding distributions over the $h$-free numbers and over the $h$-full numbers. 
For $x >1$, let $S_h(x)$ and $N_h(x)$ respectively be the set of $h$-free and the set of $h$-full numbers less than or equal to $x$. The authors in \cite[Theorem 1.1]{dkl2} proved the first and the second moments of $\omega(n)$ over $h$-free elements as the following:
$$\sum_{n \in S_h(x)} \omega(n) = \frac{1}{\zeta(h)} x \log \log x + 
 \frac{C_1}{\zeta(h)} x
+ O_h \left( \frac{x}{\log x}\right),$$
and
\begin{align*}
\sum_{n \in S_h(x)}  \omega^2(n) 
& = \frac{1}{\zeta(h)}  x (\log \log x)^2 + \frac{2 C_1 + 1}{\zeta(h)} x \log \log x + \frac{C_2}{\zeta(h)} x + O_h \left( \frac{x \log \log x}{\log x}\right),
\end{align*}
where $C_1$ and $C_2$ are constants that depend only on the set of primes and $h$, and where $O_X$ denotes that the implied big-O constant depends on the variable set $X$. Similarly, in \cite[Theorem 1.2]{dkl2}, they proved the first and the second moments of $\omega(n)$ over $h$-full elements as the following:
\begin{align*}
\sum_{n \in N_h(x)} \omega(n) & =  \gamma_{0,h} x^{1/h} \log \log x + D_1 \gamma_{0,h} x^{1/h} + O_h \left( \frac{x^{1/h}}{\log x} \right),
\end{align*}
and
\begin{align*}
     \sum_{n \in N_h(x)} \omega^2(n)
     & = \gamma_{0,h} x^{1/h} (\log \log x)^2 +   \left( 2 D_1 + 1 \right) \gamma_{0,h} x^{1/h} \log \log x + D_2 \gamma_{0,h} x^{1/h} \\
     & \hspace{.5cm} + O_h \left( \frac{x^{1/h} \log \log x}{\log x} \right),
\end{align*}
where $D_1, D_2$, and $\gamma_{0,h}$ are constants that depend only on the set of primes and $h$. 

In this article, we expand the study of $h$-free and $h$-full elements to include any countably generated free abelian monoids for the first time. This new approach allows us to analyze a variety of mathematical objects, such as number fields, global function fields, and geometrically irreducible projective varieties, all within one unified framework. Section \ref{applications} explores various applications of this generalized setting, demonstrating the broad applicability of our approach.

Let $\mathcal{P}$ be a countable set of elements with a map
$$N: \mathcal{P} \rightarrow \mathbb{Z}_{>1}, \quad \mfp \mapsto N(\mfp).$$
We call the elements of $\mathcal{P}$ \textit{prime elements}, and the map $N(\cdot)$ as the \textit{Norm map}. Let $\mathcal{M}$ be a \textit{free abelian monoid} generated by elements of $\mathcal{P}$. In other words, for each $\mfm \in \mathcal{M}$, we write
$$\mfm = \sum_{\mfp \in  \mathcal{P}} n_\mfp(\mfm) \mfp,$$
with $n_\mfp(\mfm) \in \mathbb{N} \cup \{ 0 \}$ and $n_\mfp(\mfm) =0$ for all but finitely many $\mfp$. We call $n_\mfp(\mfm)$ the \textit{multiplicity} of $\mfp$ in $\mfm$. We extend the norm map $N$ on $\mcm$ as the following:
\begin{align*}
    N : \mcm & \rightarrow \mathbb{N} \\
    \mfm = \sum_{\mfp \in \mathcal{P}} n_\mfp(\mfm) \mfp & \longmapsto N(\mfm) := \prod_{\mfp \in \mathcal{P}} N(\mfp)^{n_\mfp(\mfm)}. 
\end{align*}
Thus, $N$ can be extended to a monoid homomorphism from $(\mcm,+)$ to $(\mathbb{N},\cdot)$. Let $X$ be a countable subset of $\mathbb{Q}$ that contains the image $\text{Im}(N(\mcm))$ with an extra condition: if $x_1,x_2 \in X$, the fraction $x_1/x_2$ belongs to $X$, too. By \cite[Theorem 2]{liu}, without loss of generality, one can assume $X = \mathbb{Q}$ or $X = \{ q^{z}: z \in \mathbb{Z} \}$ where $q$ is a power of a natural number strictly greater than 1. 

Given $\mcp$, $\mcm$, $X$, and for sufficiently large $x \in X$, we assume that the following condition holds: 
\begin{itemize}
    \item[$(\star)$] $I(x) := \sum\limits_{\substack{\mfm \in \mcm \\ N(\mfm) \leq x}} 1 = \kappa x + O(x^\theta)$, for some $\kappa > 0$ and $0 \leq \theta < 1$.
\end{itemize}

For each $\mfm \in \mcm$, we define
$$\omega(\mfm) = \sum_{\substack{\mfp \in \mathcal{P} \\ n_\mfp(\mfm) \geq 1}} 1,$$
the number of elements of $\mathcal{P}$ that generates $\mfm$, counted without multiplicity.


For a non-zero element $\mathfrak{m} \in \mathcal{M}$, let the prime element factorization of $\mathfrak{m}$ be given as
\begin{equation}\label{factorization}
\mathfrak{m} = s_1 \mfp_1 + \cdots + s_r \mfp_r,
\end{equation}
where $\mfp_i'$s are its distinct prime elements and $s_i'$s are their respective non-zero multiplicities. Here, $\omega(\mfm) = r$ and 
$$N(\mfm) = N(\mfp_1)^{s_1} \cdots N(\mfp_r)^{s_r}.$$ 

Let $h \geq 2$ be an integer. We say $\mathfrak{m}$ is an $h$\textit{-free} element if $s_i \leq h-1$ for all $i \in \{1, \cdots, r\}$, and we say $\mathfrak{m}$ is an $h$\textit{-full} element if $s_i \geq h$ for all $i \in \{1, \cdots, r\}$
. Let $\mathcal{S}_h$ denote the set of $h$-free elements and $\mathcal{N}_h$ denote the set of $h$-full elements. The distribution of $h$-free elements in $\mcm$ is well-established. To demonstrate the result, we introduce the generalized $\zeta$-function which is an analog of the classical Riemann $\zeta$-function as the following:
\[
\zeta_\mcm(s) := \sum_{\substack{\mathfrak{m}}} \frac1{(N(\mathfrak{m}) )^s} 
= \prod_{\mfp} \Big( 1- N (\mfp)^{-s}\Big)^{-1}
\ \text{for } \Re (s) >1,
\]
where $\mathfrak{m}$ and $\mfp$ respectively range through the non-zero elements in $\mcm$ and the prime elements in $\mcp$. Since Condition ($\star$) satisfies \cite[Chapter 4, Axiom A]{jk2} with $(G, \cdot) = (\mathcal{M}, +)$, $A = \kappa$, $\delta = 1$, and $\eta = \theta$, the absolute convergence of the above series for $\Re (s) >1$ follows from \cite[Chapter 4, Proposition 2.6]{jk2}. In the region $\Re (s) >1$, the proposition also proves the equality between the sum and the product, proves that $\zeta_\mcm(s) \neq 0$, and establishes the integral form of $\zeta_\mcm(s)$ as the following:
$$\zeta_\mcm(s) = s \int_1^\infty I(x) x^{-s-1} \ dx.$$

Let $\mathcal{P}, \ \mcm$, and $X$ satisfy the Condition ($\star$). Let $x \in X$ and let $\mathcal{S}_h(x)$ denote the set of $h$-free elements with norm $N(\cdot)$ less than or equal to $x$. Since Condition ($\star$) satisfies \cite[Chapter 4, Axiom A]{jk2}, thus, by \cite[Chapter 4, Proposition 5.5]{jk2}, we have:
\begin{equation}\label{hfreeidealcount}
    |\mathcal{S}_h(x)| = \frac{\kappa}{\zeta_\mcm(h)} x + O_h \big( R_{\mathcal{S}_h}(x) \big),
\end{equation}
where $|S|$ denotes the cardinality of the set $S$, and where
   \begin{equation}\label{RSh(x)}
    R_{\mathcal{S}_h}(x) = \begin{cases}
    x^\theta  & \text{ if } \frac{1}{h} < \theta, \\
    x^\frac{1}{h} (\log x) & \text{ if } \frac{1}{h} = \theta, \\
    x^{\frac{1}{h}} & \text{ if } \frac{1}{h} > \theta.
\end{cases}
    \end{equation}
As an addition to the literature, we prove the distribution of $h$-full elements. To do this, we need to define a new constant. Note that for a real sequence $\{ a_n \}$, the absolute convergence of the product $\prod_{n=1}^\infty (1 + a_n)$ is equivalent to the absolute convergence of the sum $\sum_{n=1}^\infty a_n$. Since $\sum_\mfp N(\mfp)^{-(1+1/h)}$ converges by Part 3 of \lmaref{boundnm}, thus the product 
\begin{equation}\label{gammahk}
    \gamma_{\scaleto{h}{4.5pt}} = \gamma_{\scaleto{h,\mcm}{5.5pt}} := \prod_\mfp \left( 1 + \frac{N(\mfp) - N(\mfp)^{1/h}}{N(\mfp)^2 \left( N(\mfp)^{1/h} - 1 \right)}\right)
\end{equation}
converges as well. The lemma also proves the convergence of all series of the form $\sum_{\mfp} N(\mfp)^{-\alpha}$ where $\alpha > 1$. We will use this reasoning without repetition to define other constants in our work.

For the distribution of $h$-full elements, we prove:
\begin{thm}\label{hfullideals}
Let $\mathcal{P}, \mcm$, and $X$ satisfy the Condition ($\star$). Let $x \in X$ and $h \geq 2$ be any integer. Let $\mathcal{N}_h(x)$ denote the set of $h$-full elements with norm $N(\cdot)$ less than or equal to $x$. We have
$$|\mathcal{N}_h(x)| = \kappa \gamma_{\scaleto{h}{4.5pt}} x^{1/h} + O_h \big( R_{\mathcal{N}_h}(x) \big),$$
where $\gamma_{\scaleto{h}{4.5pt}}$ is the constant defined in \eqref{gammahk},
and where
\begin{equation}\label{E2(x)}
    R_{\mathcal{N}_h}(x) = \begin{cases}
        x^{\frac{\theta}{h}} & \text{ if } \frac{h}{h+1} < \theta, \\
        x^{\frac{1}{h+1}} (\log x) & \text{ if } \frac{h}{h+i} = \theta  \text{ for some } i \in \{ 1, \ldots, h-1 \}, \\
        x^{\frac{1}{h+1}} & \text{ if } \frac{h}{h+1} > \theta \text{ and } \frac{h}{h+i} \neq \theta \text{ for any } i \in \{ 1, \cdots, h-1 \}.
    \end{cases} 
\end{equation}
\end{thm}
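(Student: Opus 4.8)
The plan is to count $h$-full elements by stratifying according to the exponent vector. Every $h$-full element $\mathfrak{m}$ can be written uniquely as $\mathfrak{m} = h\,\mathfrak{a} + \mathfrak{b}$, where $\mathfrak{a}$ ranges over \emph{all} of $\mathcal{M}$ and $\mathfrak{b}$ is supported exactly on the primes dividing $\mathfrak{a}$ with multiplicity $n_\mathfrak{p}(\mathfrak{b}) \in \{0,1,\ldots,h-1\}$ for each such $\mathfrak{p}$ (so that the total multiplicity $h + n_\mathfrak{p}(\mathfrak{b})$ runs over $\{h, h+1, \ldots, 2h-1\}$); more precisely one writes $\mathfrak{m} = \sum_i s_i \mathfrak{p}_i$ with each $s_i \geq h$ and sets $s_i = h\lceil s_i/h \rceil$ adjusted — the cleanest bookkeeping is: an $h$-full element is determined by a squarefree-type ``core'' plus higher data. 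Concretely I would use the classical identity that the generating Dirichlet series of $h$-full elements factors as
\begin{equation*}
\sum_{\mathfrak{m}\ h\text{-full}} N(\mathfrak{m})^{-s} = \prod_\mathfrak{p}\left(1 + \frac{N(\mathfrak{p})^{-hs}}{1 - N(\mathfrak{p})^{-s}}\right) = \zeta_\mcm(hs)\,\zeta_\mcm((h+1)s) \cdots \zeta_\mcm((2h-1)s)\, G(s),
\end{equation*}
where $G(s)$ is given by an Euler product absolutely convergent for $\Re(s) > 1/(2h)$. Then $|\mathcal{N}_h(x)|$ is recovered by a Perron/Dirichlet-hyperbola argument, with the main pole coming from $\zeta_\mcm(hs)$ at $s = 1/h$; the residue there is $\kappa \cdot \zeta_\mcm(\tfrac{h+1}{h})\cdots\zeta_\mcm(\tfrac{2h-1}{h})\,G(1/h)$, which must be checked to equal $\kappa\gamma_{\scaleto{h}{4.5pt}}$ by comparing Euler factors.

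Since the paper seems to favor elementary convolution arguments over contour integration (it invokes \cite[Chapter 4]{jk2} throughout), I would instead run the argument combinatorially. Write each $h$-full $\mathfrak{m}$ as $\mathfrak{m} = h\mathfrak{a} + \mathfrak{c}$ where $\mathfrak{a} \in \mathcal{M}$ is arbitrary and $\mathfrak{c}$ is an $h$-free element whose support is \emph{contained in} the support of $\mathfrak{a}$ (allowing multiplicity $0$); this is a bijection because, given $\mathfrak{m} = \sum s_i \mathfrak{p}_i$, each $s_i \geq h$ is written as $s_i = h q_i + r_i$ with $q_i \geq 1$ and $0 \le r_i \le h-1$, contributing $q_i \mathfrak{p}_i$ to $\mathfrak{a}$ and $r_i \mathfrak{p}_i$ to $\mathfrak{c}$. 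Thus
\begin{equation*}
|\mathcal{N}_h(x)| = \sum_{\substack{\mathfrak{a} \in \mathcal{M} \\ \mathfrak{c}\ h\text{-free},\ \mathrm{supp}(\mathfrak{c}) \subseteq \mathrm{supp}(\mathfrak{a}) \\ N(\mathfrak{a})^h N(\mathfrak{c}) \le x}} 1.
\end{equation*}
I would sum first over $\mathfrak{a}$ and handle the inner sum over admissible $\mathfrak{c}$ (a finite product over primes dividing $\mathfrak{a}$ of geometric-type factors), or — more efficiently — sum over $\mathfrak{c}$ first using that for fixed $\mathfrak{c}$ the elements $\mathfrak{a}$ must be divisible by $\mathrm{supp}(\mathfrak{c})$, reducing the $\mathfrak{a}$-count to Condition~$(\star)$ applied after factoring out those primes. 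Either way, the leading term is $\kappa x^{1/h}$ times a convergent product over primes, and matching that product to \eqref{gammahk} is a direct Euler-factor computation: the factor at $\mathfrak{p}$ should come out to $1 + \frac{N(\mathfrak{p}) - N(\mathfrak{p})^{1/h}}{N(\mathfrak{p})^2(N(\mathfrak{p})^{1/h}-1)}$.

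For the error term, after applying Condition~$(\star)$ the main contribution $\kappa\sum_\mathfrak{c} (\cdots)$ has a tail, and the $O(x^\theta)$-type errors accumulate over the range of summation. The various regimes in \eqref{E2(x)} arise exactly as in the $h$-free case \eqref{RSh(x)}: the term $x^{\theta/h}$ dominates when $\theta$ is large (here $\theta > h/(h+1)$); otherwise the savings from truncating the product over primes with norm up to $x^{1/h}, x^{1/(h+1)}, \ldots, x^{1/(2h-1)}$ competes with $x^{1/(h+1)}$, producing the logarithmic loss precisely when $\theta = h/(h+i)$ for some $i \in \{1,\ldots,h-1\}$ — i.e. when a power-saving term $x^{\theta/(h+i)}$ coincides in exponent with $x^{1/(h+1)}$ — wait, one checks $\theta/(h+i) $ versus $1/(h+1)$: equality gives $\theta = (h+i)/(h+1)$, not matching; the correct matching is between $x^{1/(h+1)}$ arising from the convolution against $\zeta_\mcm((h+1)s)$-type contributions and the shifted error $x^{\theta \cdot j}$ for the relevant $j$, and the boundary cases $\theta = h/(h+i)$ mark where $x^{\theta/h}$ meets $x^{1/(h+i)}$.

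\medskip

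The main obstacle I anticipate is precisely this error-term bookkeeping: organizing the double sum so that Condition~$(\star)$ is applied on the correct inner variable with a genuinely power-saving remainder, and then carefully tracking how the secondary main terms from $\zeta_\mcm((h+i)s)$ for $i = 1, \ldots, h-1$ interact with the $O(x^\theta)$ error to produce the three-case split in \eqref{E2(x)}, including identifying exactly for which $i$ a $\log x$ factor is forced. The algebraic identification of the leading constant with $\gamma_{\scaleto{h}{4.5pt}}$ is routine once the Euler product is in hand; the convergence of everything in sight is already guaranteed by \lmaref{boundnm} as cited. I would structure the write-up as: (1) establish the bijection $\mathfrak{m} \leftrightarrow (\mathfrak{a},\mathfrak{c})$; (2) write $|\mathcal{N}_h(x)|$ as a double sum and invoke $(\star)$; (3) extract and simplify the main term, matching it to $\kappa\gamma_{\scaleto{h}{4.5pt}} x^{1/h}$ via Euler factors; (4) bound the error, splitting into the three regimes.
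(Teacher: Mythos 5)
Your plan is essentially the paper's own proof: the paper decomposes each $h$-full $\mathfrak{m}$ by the residues of its multiplicities mod $h$ as $h\mathfrak{a}_0 + (h+1)\mathfrak{a}_1 + \cdots + (2h-1)\mathfrak{a}_{h-1}$ (your $\mathfrak{m} = h\mathfrak{a}+\mathfrak{c}$ is the same bookkeeping), factors the generating series as $\zeta_\mcm(hs)\cdots\zeta_\mcm((2h-1)s)\,\mathcal{G}_h(s)$ with the correction factor absolutely convergent beyond $s=1/(h+1)$, evaluates the unweighted convolution $\mathcal{T}_h(x)$ by iterating Condition ($\star$) with the tail bounds of \lmaref{boundnm}, and matches Euler factors to identify the constant $\gamma_{\scaleto{h}{4.5pt}}$. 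Your error-term diagnosis, after the mid-paragraph wobble, lands exactly where the paper does — the three regimes of \eqref{E2(x)} come from comparing $x^{\theta/h}$ against $x^{1/(h+i)}$, with the $\log x$ forced precisely at $\theta = h/(h+i)$ (the case $\theta(1+i/h)=1$ in \lmaref{T_h(X)bound}) — and your weaker convergence claim $\Re(s) > 1/(2h)$ for the correction factor still suffices, since $1/(2h) < 1/(h+1)$ for $h \geq 2$.
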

We use the above distribution results to study the distribution of $\omega(\mfm)$ over $h$-free and $h$-full elements. We begin with some definitions.

Let $x \in X$. Let $\mathfrak{A}$ and $\mathfrak{B}$ be constants defined as
\begin{equation}\label{A}
    \mfa:= \lim_{x \rightarrow \infty} \left( \sum_{\substack{\mfp \in \mathcal{P} \\ N(\mfp) \leq x}} \frac{1}{N(\mfp)} - \log \log x \right),
\end{equation}
and
\begin{equation}\label{B}
    \mathfrak{B} := \begin{cases}
        - \pi^2/6  & \text{if } X = \mathbb{Q} \text{ and} \\
         (\log \log q)^2 - \pi^2/6 & \text{if } X = \{ q^{z} : z \in \mathbb{Z} \}.
    \end{cases}
\end{equation}
The existence of the constant $\mathfrak{A}$ is explained in \cite[Lemma 2]{liuturan}. We define the constants
\begin{equation}\label{C1}
    \mathfrak{C}_1 :=  \mfa - \sum_{\mfp} \frac{N(\mfp)-1}{N(\mfp)(N(\mfp)^h -1)},
\end{equation}
and
\begin{equation}\label{C2}
    \mfc_2 := \mfc_1^2 + \mfc_1 + \mathfrak{B} - \sum_\mfp  \left( \frac{N(\mfp)^{h-1} - 1}{N(\mfp)^h - 1} \right)^2.
\end{equation}
For the distribution of $\omega(\mfm)$ over $h$-free elements, we prove:
\begin{thm}\label{hfreeomega}
Let $\mathcal{P}, \mcm$, and $X$ satisfy the Condition ($\star$). Let $x \in X$ and $h \geq 2$ be an integer. Let $\mathcal{S}_h(x)$ be the set of $h$-free elements with norm $N(\cdot)$ less than or equal to $x$. Then, we have
$$\sum_{\mathfrak{m} \in \mathcal{S}_h(x)} \omega(\mfm) = \frac{\kappa}{\zeta_\mcm(h)} x \log \log x + 
\frac{\kappa \mfc_1}{\zeta_\mcm(h)} x
+ O_h \left( \frac{x}{\log x}\right),$$
and
\begin{align*}
\sum_{\mathfrak{m} \in \mathcal{S}_h(x)} \omega^2(\mfm)
& = \frac{\kappa}{\zeta_\mcm(h)}  x (\log \log x)^2 + \frac{\kappa (2 \mfc_1 + 1)}{\zeta_\mcm(h)} x \log \log x + \frac{\kappa \mfc_2}{\zeta_\mcm(h)} x \\
& + O_h \left( \frac{x \log \log x}{\log x}\right).
\end{align*}
\end{thm}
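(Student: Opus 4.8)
The plan is to mimic the classical Turán-type argument for $\omega(n)$, but carried out over the monoid $\mcm$ and restricted to $h$-free elements, using the counting estimate \eqref{hfreeidealcount} as the main external input. For the first moment, I would interchange the order of summation:
\[
\sum_{\mfm \in \mathcal{S}_h(x)} \omega(\mfm)
= \sum_{\substack{\mfp \in \mcp \\ N(\mfp) \le x}} \ \sum_{\substack{\mfm \in \mathcal{S}_h(x) \\ n_\mfp(\mfm) \ge 1}} 1.
\]
For a fixed prime element $\mfp$, an $h$-free $\mfm$ divisible by $\mfp$ (i.e.\ with $n_\mfp(\mfm)\ge 1$) can be written uniquely as $\mfm = k\mfp + \mfm'$ with $1\le k\le h-1$ and $\mfm'$ an $h$-free element coprime to $\mfp$; the count of such $\mfm'$ with $N(\mfm')\le x/N(\mfp)^k$ is, by an inclusion–exclusion removing the multiples of $\mfp$ from \eqref{hfreeidealcount}, of the shape $\frac{\kappa}{\zeta_\mcm(h)}\bigl(1-N(\mfp)^{-1}\bigr)^{-1}\cdot\frac{x}{N(\mfp)^k}\cdot(\text{local factor at }\mfp) + O_h(R_{\mathcal{S}_h}(x/N(\mfp)^k))$. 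Summing the geometric-type series over $k$ from $1$ to $h-1$ produces a local factor whose aggregate is exactly $\sum_\mfp \frac{N(\mfp)-1}{N(\mfp)(N(\mfp)^h-1)}$ after simplification, while the dominant term $\sum_{N(\mfp)\le x} \frac{\kappa}{\zeta_\mcm(h)}\cdot\frac{x}{N(\mfp)}$ gives $\frac{\kappa}{\zeta_\mcm(h)}x\log\log x + \mfa\cdot\frac{\kappa}{\zeta_\mcm(h)}x$ by \eqref{A}. Combining, the constant becomes $\mfa - \sum_\mfp \frac{N(\mfp)-1}{N(\mfp)(N(\mfp)^h-1)} = \mfc_1$, as required. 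The only subtlety is the truncation range of $\mfp$: one must cut the sum at $N(\mfp)\le x^{1/h}$ or so (larger primes cannot appear in an $h$-free element with small norm in more than bounded ways) and control the tail and the accumulated error terms $\sum_\mfp R_{\mathcal{S}_h}(x/N(\mfp)^k)$, which is where the $O_h(x/\log x)$ comes from — this bookkeeping, using Part~3 of \lmaref{boundnm} to bound $\sum_\mfp N(\mfp)^{-\alpha}$ for $\alpha>1$, is the most delicate part of the first-moment computation.

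For the second moment I would write $\omega(\mfm)^2 = \sum_{\mfp,\mfq} \mathbf{1}[n_\mfp(\mfm)\ge 1]\mathbf{1}[n_\mfq(\mfm)\ge 1]$ and split into the diagonal $\mfp=\mfq$, which contributes exactly the first moment $\sum_{\mfm\in\mathcal{S}_h(x)}\omega(\mfm)$ already computed, and the off-diagonal $\mfp\ne\mfq$. For the off-diagonal term, a fixed pair of distinct primes $\mfp,\mfq$ with $N(\mfp)N(\mfq)\le x$ forces $\mfm = k\mfp + \ell\mfq + \mfm'$ with $1\le k,\ell\le h-1$ and $\mfm'$ $h$-free and coprime to both $\mfp$ and $\mfq$; counting these via \eqref{hfreeidealcount} with a two-variable inclusion–exclusion yields a main term $\frac{\kappa}{\zeta_\mcm(h)}x \cdot \bigl(\sum_{1\le k\le h-1} N(\mfp)^{-k}\bigr)\bigl(\sum_{1\le\ell\le h-1}N(\mfq)^{-\ell}\bigr)\cdot(\text{local factors})$ summed over the pairs. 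The key identity is that $\sum_{1\le k\le h-1}N(\mfp)^{-k}$ times the local normalization equals $\frac{N(\mfp)^{h-1}-1}{N(\mfp)^h-1}$, so that summing over all $\mfp$ with $N(\mfp)\le x^{1/2}$ (roughly) and completing to infinity introduces the term $-\sum_\mfp\bigl(\frac{N(\mfp)^{h-1}-1}{N(\mfp)^h-1}\bigr)^2$, while the leading double sum $\bigl(\sum_{N(\mfp)\le \sqrt x} N(\mfp)^{-1}\bigr)^2$ expands, via \eqref{A} and \eqref{B}, as $(\log\log x)^2 + 2\mfa\log\log x + \mfa^2 + \mfB + o(1)$. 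Assembling diagonal plus off-diagonal and matching with the definitions \eqref{C1}, \eqref{C2} gives the stated expansion; the coefficient of $x\log\log x$ comes out as $\frac{\kappa}{\zeta_\mcm(h)}(2\mfc_1+1)$ (the $+1$ being the diagonal's first-moment leading term) and the constant as $\frac{\kappa}{\zeta_\mcm(h)}\mfc_2$ with $\mfc_2 = \mfc_1^2+\mfc_1+\mfB - \sum_\mfp(\cdots)^2$.

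The main obstacle, as in all such computations, is uniform error control. I expect the hard part to be: (i) choosing the truncation points for $\mfp$ (and the pair $\mfp,\mfq$) so that the discarded ranges contribute only $O_h(x/\log x)$ and $O_h(x\log\log x/\log x)$ respectively — this requires knowing that an $h$-free element of norm $\le x$ has $\omega(\mfm) \ll \log x/\log 2$ and that primes with $N(\mfp)$ in the "large" range contribute negligibly; and (ii) summing the error terms $R_{\mathcal{S}_h}(x/N(\mfp)^k)$ from \eqref{RSh(x)}, which behave differently in the three regimes $\theta \gtrless 1/h$, so the estimate must be organized case-by-case on $\theta$; one checks that in every case the total is absorbed into the stated error. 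A secondary point is justifying the passage from the truncated products/sums of local factors to the infinite products defining $\mfc_1,\mfc_2$ and $\gamma_h$ — this is exactly the absolute-convergence reasoning flagged in the excerpt (via $\sum_\mfp N(\mfp)^{-\alpha}<\infty$ for $\alpha>1$ from \lmaref{boundnm}), applied here to the tails $\sum_{N(\mfp)>T}\frac{N(\mfp)-1}{N(\mfp)(N(\mfp)^h-1)} \ll T^{-h}$ and similar. Once these error bounds are in place, the identification of the constants is a finite algebraic simplification of the geometric sums over $k$ and $\ell$, and the proof closes.
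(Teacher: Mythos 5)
Your proposal follows essentially the same route as the paper: the interchange of summation, the coprime-restricted $h$-free count (the paper's Lemma~\ref{hfreeidealrestrict}, proved by exactly the inclusion--exclusion you describe), the geometric sum over multiplicities collapsing to $\frac{1}{N(\mfp)} - \frac{N(\mfp)-1}{N(\mfp)(N(\mfp)^h-1)}$, and the diagonal/off-diagonal split with the two-prime restricted count for the second moment. One small imprecision to fix: the constant $\mathfrak{B}$ does not arise from squaring the truncated Mertens sum $\bigl(\sum_{N(\mfp)\le\sqrt x}N(\mfp)^{-1}\bigr)^{2}$, but from the hyperbolic pair sum $\sum_{N(\mfp)N(\mfq)\le x}(N(\mfp)N(\mfq))^{-1}$ (the paper's Lemma~\ref{saidakeq}), which is the object your off-diagonal term actually produces.
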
 
Next, we consider the case of $h$-full elements. Let $\mathfrak{L}_h(r)$ be the convergent sum defined for $r > h$ as
\begin{equation}\label{lhr}
     \mathfrak{L}_h(r) := \sum_\mfp \frac{1}{N(\mfp)^{(r/h)-1} \left( N(\mfp) - N(\mfp)^{1-1/h} + 1 \right)}.
\end{equation}
We also define two new constants
\begin{equation}\label{D1}
    \mathfrak{D}_1 :=  \mathfrak{A} - \log h + \mathfrak{L}_h(h+1) - \mathfrak{L}_h(2h),
\end{equation}
and
\begin{equation}\label{D2}
    \mathfrak{D}_2 := \mathfrak{D}_1^2 + \mathfrak{D}_1 + \mathfrak{B}
     - \sum_{\mfp} \left( \frac{1}{N(\mfp)-N(\mfp)^{1-1/h}+1} \right)^2.
\end{equation}
For the distributions of $\omega(\mfm)$ over $h$-full elements, we prove:
\begin{thm}\label{hfullomega}
Let $\mathcal{P}, \mcm$, and $X$ satisfy the Condition ($\star$). Let $x \in X$ and $h \geq 2$ be an integer. Let $\mathcal{N}_h(x)$ be the set of $h$-full elements with norm $N(\cdot)$ less than or equal to $x$. Let $\gamma_h$ be defined in \eqref{gammahk}. Then, we have
\begin{align*}
\sum_{\mfm \in \mathcal{N}_h(x)} \omega(\mfm) & = \kappa \gamma_{h} x^{1/h} \log \log x + \kappa \gamma_h \mathfrak{D}_1 x^{1/h} + O_h \left( \frac{x^{1/h}}{\log x} \right),
\end{align*}
and
\begin{align*}
     \sum_{\mfm \in \mathcal{N}_h(x)} \omega^2(\mfm)
     & = \kappa \gamma_{h} x^{1/h} (\log \log x)^2 + \kappa \gamma_h \left( 2 \mathfrak{D}_1 + 1 \right) x^{1/h} \log \log x + \kappa \gamma_h \mathfrak{D}_2 x^{1/h} \\
     & \hspace{.5cm} + O_h \left( \frac{x^{1/h} \log \log x}{\log x} \right).
\end{align*}
\end{thm}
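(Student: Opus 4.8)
The plan is to reduce both moments, via interchange of summation, to counting the $h$-full elements divisible by one (respectively two) prescribed prime elements, to express those counts through \thmref{hfullideals} applied to a sub-monoid, and then to evaluate the resulting sums over prime elements by a Mertens-type estimate. Since $\omega(\mfm)=\sum_{\mfp\mid\mfm}1$, expanding the power and swapping sums yields
\[
\sum_{\mfm\in\mathcal{N}_h(x)}\omega(\mfm)=\sum_{\mfp}\#\{\mfm\in\mathcal{N}_h(x):\mfp\mid\mfm\},\qquad
\sum_{\mfm\in\mathcal{N}_h(x)}\omega^2(\mfm)=\sum_{\mfp_1,\mfp_2}\#\{\mfm\in\mathcal{N}_h(x):\mfp_1\mid\mfm,\ \mfp_2\mid\mfm\},
\]
and in the second identity the diagonal $\mfp_1=\mfp_2$ gives back the first moment, so only the pairs $\mfp_1\ne\mfp_2$ remain.

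\emph{The twisted count.} If $\mfm\in\mathcal{N}_h$ and $\mfp\mid\mfm$, then $n_\mfp(\mfm)\ge h$ and $\mfm\mapsto\bigl(n_\mfp(\mfm),\,\mfm-n_\mfp(\mfm)\mfp\bigr)$ is a bijection onto pairs $(e,\mfm')$ with $e\ge h$ and $\mfm'$ an $h$-full element of the free abelian monoid $\mcm^{(\mfp)}$ generated by $\mcp\setminus\{\mfp\}$. Since
\[
\#\{\mfm\in\mcm:\mfp\nmid\mfm,\ N(\mfm)\le x\}=I(x)-I(x/N(\mfp))=\kappa\bigl(1-N(\mfp)^{-1}\bigr)x+O(x^\theta),
\]
the monoid $\mcm^{(\mfp)}$ satisfies Condition $(\star)$ with the same $\theta$ and with constant $\kappa(1-N(\mfp)^{-1})\le\kappa$, uniformly in $\mfp$; hence \thmref{hfullideals} applies to it, and a short computation with the Euler factor removed at $\mfp$, which is exactly the factor appearing in \eqref{gammahk}, shows its leading constant equals $\kappa\gamma_h/c_\mfp$ with $c_\mfp:=1+N(\mfp)^{-1}\bigl(1-N(\mfp)^{-1/h}\bigr)^{-1}$. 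Therefore
\[
\#\{\mfm\in\mathcal{N}_h(x):\mfp\mid\mfm\}=\sum_{\substack{e\ge h\\ N(\mfp)^e\le x}}\left(\frac{\kappa\gamma_h}{c_\mfp}\Bigl(\frac{x}{N(\mfp)^e}\Bigr)^{1/h}+O_h\bigl(R^{(\mfp)}(x/N(\mfp)^e)\bigr)\right),
\]
with $R^{(\mfp)}$ the error term of \eqref{E2(x)} for $\mcm^{(\mfp)}$, and analogously for a coprime pair $\mfp_1\ne\mfp_2$ with $c_\mfp$ replaced by $c_{\mfp_1}c_{\mfp_2}$ (using $\mcm^{(\mfp_1,\mfp_2)}$, whose $(\star)$-constant is $\kappa(1-N(\mfp_1)^{-1})(1-N(\mfp_2)^{-1})$). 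These sums over $e$ are finite: only prime elements with $N(\mfp)\le x^{1/h}$ contribute.

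\emph{First moment.} Summing over $\mfp$, extending every $e$-sum to infinity (at a cost $\ll x^{1/h}\cdot x^{-1/h}\log\log x\ll x^{1/h}/\log x$), and using $c_\mfp^{-1}N(\mfp)^{-1}(1-N(\mfp)^{-1/h})^{-1}=1-c_\mfp^{-1}=b_\mfp$ with $b_\mfp:=\bigl(N(\mfp)-N(\mfp)^{1-1/h}+1\bigr)^{-1}$, the main term becomes $\kappa\gamma_h x^{1/h}\sum_{N(\mfp)\le x^{1/h}}b_\mfp$. Writing $b_\mfp=N(\mfp)^{-1}+\bigl(b_\mfp-N(\mfp)^{-1}\bigr)$, the Mertens-type estimate $\sum_{N(\mfp)\le z}N(\mfp)^{-1}=\log\log z+\mfa+O(1/\log z)$ (cf.\ \cite[Lemma 2]{liuturan}), the absolute convergence $\sum_\mfp\bigl(b_\mfp-N(\mfp)^{-1}\bigr)=\mathfrak{L}_h(h+1)-\mathfrak{L}_h(2h)$ (each summand being $O(N(\mfp)^{-1-1/h})$), and $\log\log(x^{1/h})=\log\log x-\log h$ together give $\sum_{N(\mfp)\le x^{1/h}}b_\mfp=\log\log x+\mathfrak{D}_1+O(1/\log x)$; multiplying by $\kappa\gamma_h x^{1/h}$ yields the first moment, provided the errors $\sum_\mfp\sum_e R^{(\mfp)}(x/N(\mfp)^e)$ are $O_h(x^{1/h}/\log x)$. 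The last bound follows from \eqref{E2(x)} by summing the geometric series in $e$ and using $\sum_{N(\mfp)\le z}N(\mfp)^{-\alpha}\ll z^{1-\alpha}/\log z$ for $\alpha<1$ (a consequence of Condition $(\star)$), the exponents balancing to $x^{1/h}$ for every admissible $\theta$.

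\emph{Second moment and the main obstacle.} The diagonal equals the first moment. Substituting the two-prime twisted count, the off-diagonal main term is
\[
\kappa\gamma_h x^{1/h}\sum_{\mfp_1\ne\mfp_2}\frac{1}{c_{\mfp_1}c_{\mfp_2}}\sum_{\substack{e_1,e_2\ge h\\ N(\mfp_1)^{e_1}N(\mfp_2)^{e_2}\le x}}N(\mfp_1)^{-e_1/h}N(\mfp_2)^{-e_2/h},
\]
and this is the crux: the constraint $N(\mfp_1)^{e_1}N(\mfp_2)^{e_2}\le x$ couples the two prime variables, so the sum is \emph{not} simply $\bigl(\sum b_\mfp\bigr)^2-\sum b_\mfp^2$. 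Grouping each pair $(\mfp_i,e_i)$ into the single prime-power element $e_i\mfp_i$ of norm $N(\mfp_i)^{e_i}$ turns it into an abstract Tur\'an-type double sum for the norm $N(\cdot)^{1/h}$; evaluating this by the method of \cite{liuturan}---splitting at $N(\mfp_1)\le x^{1/(2h)}$, inserting $\log\log\bigl((x/N(\mfp_1)^{e_1})^{1/h}\bigr)=\log\log x-\log h+\log\bigl(1-\log N(\mfp_1)^{e_1}/\log x\bigr)$ into the inner sum, and recognising $\int_0^1 v^{-1}\log(1-v)\,dv=-\pi^2/6$ (its $q$-adic analogue for $X=\{q^z\}$ producing the $(\log\log q)^2$ term)---gives precisely $\kappa\gamma_h x^{1/h}\bigl[(\log\log x+\mathfrak{D}_1)^2+\mathfrak{B}-\sum_\mfp b_\mfp^2\bigr]+O_h(x^{1/h}\log\log x/\log x)$, where $\sum_\mfp b_\mfp^2=\sum_\mfp\bigl(N(\mfp)-N(\mfp)^{1-1/h}+1\bigr)^{-2}$ converges. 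Adding the diagonal $\kappa\gamma_h x^{1/h}(\log\log x+\mathfrak{D}_1)$ and expanding produces the pattern $(\log\log x)^2+(2\mathfrak{D}_1+1)\log\log x+\mathfrak{D}_2$ with $\mathfrak{D}_2$ as in \eqref{D2}. I expect the two genuinely delicate points to be the uniformity in $\mfp$ (and in $\{\mfp_1,\mfp_2\}$) of the error term $R^{(\mfp)}$ inherited from \thmref{hfullideals} on the sub-monoid, and the careful handling of the coupled constraint in this last double sum, which is where $\mathfrak{B}$ arises.
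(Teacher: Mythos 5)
Your proposal is correct and follows essentially the same route as the paper's proof: interchange of summation to reduce each moment to counting $h$-full elements with one (resp.\ two) prescribed prime divisors, geometric summation over the exponents producing the weights $\bigl(N(\mfp)-N(\mfp)^{1-1/h}+1\bigr)^{-1}$, a Mertens-type estimate giving $\log\log x+\mathfrak{D}_1$ for the first moment, and the coupled Tur\'an-type double sum (the paper's Lemma~\ref{saidakeq}) supplying the constant $\mathfrak{B}$ in the second. The only real divergence is local: you obtain the restricted count by applying Theorem~\ref{hfullideals} to the sub-monoid generated by $\mcp\setminus\{\mfp\}$ after verifying Condition ($\star$) for it (with the uniformity in $\mfp$ that you correctly flag as the point needing care), whereas the paper proves the dedicated Lemma~\ref{hfullidealsrestrict} by removing the Euler factors at the prescribed primes from the generating series $\mathfrak{N}_h(s)$ and rerunning the argument --- both yield the same leading constant $\kappa\gamma_h/c_\mfp$ and the same error term, so the proofs are interchangeable.
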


In \cite{hardyram}, Hardy and Ramanujan strengthened the classical distribution result for $\omega(n)$ by proving that $\omega(n)$ has the normal order $\log \log n$ over natural numbers. In \cite[Section 22.11]{hw}, one can find another proof of this result using the variance of $\omega(n)$ (see \cite[(22.11.7)]{hw}). In \cite[Corollary 1]{liuturan}, using this method of variance, the third author provides an analog of this result over any abelian monoid $\mathcal{M}$. More precisely, she showed that for any $\epsilon > 0$, the number of elements of $\mcm$ with norm $N(\cdot)$ less than or equal to $x$ that do not satisfy the inequality
 $$(1-\epsilon) \log \log N(\mathfrak{m}) \leq \omega(\mathfrak{m}) \leq (1+ \epsilon) \log \log N(\mathfrak{m})$$
is $o(x)$ as $x \rightarrow \infty$. This shows that $\omega(\mathfrak{m})$ has normal order $\log \log N(\mathfrak{m})$ over $\mathcal{M}$.

Note that $\omega(n)$ may exhibit different normal orders over different subsets of $\mathbb{N}$. For instance, $\omega(p^r) = 1$ for any prime $p$ and any positive integer $r \geq 1$, and thus $\omega(n)$ has normal order 1 over the set of all prime powers. However, using the first two moments of the $\omega(n)$ over $h$-free and $h$-full numbers, the authors in \cite{dkl2} established that $\omega(n)$ has normal order $\log \log n$ over these restricted subsets as well. In this work, we generalize this to the abelian monoid case. First, we define the normal order of a function over a subset of $\mathcal{M}$. Let $\mathfrak{S} \subseteq \mathcal{M}$ and $\mathfrak{S}(x)$ denote the set of elements belonging to $\mathfrak{S}$ and with norm $N(\cdot)$ less than or equal to $x$. Let $|\mathfrak{S}(x)|$ denote the cardinality of $\mathfrak{S}(x)$. Let $f, F : \mathfrak{S} \rightarrow \mathbb{R}_{\geq 0}$ be two functions such that $F$ is non-decreasing, i.e., if $\mathfrak{m}_1, \mathfrak{m}_2 \in \mathcal{M}$ with $N(\mfm_1) \leq N(\mfm_2)$, then $F(\mfm_1) \leq F(\mfm_2)$. Then, $f(\mathfrak{m})$ is said to have normal order $F(\mathfrak{m})$ over $\mathfrak{S}$ if for any $\epsilon > 0$, the number of $\mathfrak{m} \in \mathfrak{S}(x)$ that do not satisfy the inequality
$$(1-\epsilon) F(\mathfrak{m}) \leq f(\mathfrak{m}) \leq (1+ \epsilon) F(\mathfrak{m})$$
is $o(|\mathfrak{S}(x)|)$ as $x \rightarrow \infty$. 

The set of $h$-free elements has a positive density $1/\zeta_\mcm(h)$ in $\mathcal{M}$. Thus, the proof of normal order of $\omega(\mathfrak{m})$ over $h$-free elements being $\log \log N(\mathfrak{m})$ follows from the classical case. In particular, one can establish the following:
\begin{cor}
For any $\epsilon > 0$, the number of $\mathfrak{m} \in \mathcal{S}_h(x)$ that do not satisfy the inequality
$$(1-\epsilon) \log \log N(\mathfrak{m}) \leq \omega(\mathfrak{m}) \leq (1+ \epsilon) \log \log N(\mathfrak{m})$$
is $o(|\mathcal{S}_h(x)|)$ as $x \rightarrow \infty$.
\end{cor}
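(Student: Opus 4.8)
The plan is to deduce the normal order statement from the two-moment asymptotics in \thmref{hfreeomega} by a Chebyshev-type (second moment) argument, exactly as in the classical Turán proof of the Hardy--Ramanujan theorem. First I would fix $\epsilon > 0$ and introduce the ``bad'' set
$$\mathcal{E}(x) := \Big\{ \mathfrak{m} \in \mathcal{S}_h(x) : \big| \omega(\mathfrak{m}) - \log \log N(\mathfrak{m}) \big| > \epsilon \log \log N(\mathfrak{m}) \Big\},$$
and the goal is to show $|\mathcal{E}(x)| = o(|\mathcal{S}_h(x)|)$. The usual device is to replace $\log \log N(\mathfrak{m})$ by $\log \log x$ up to a controlled error: since for $\mathfrak{m} \in \mathcal{S}_h(x)$ with $N(\mathfrak{m}) \geq \sqrt{x}$ we have $\log \log N(\mathfrak{m}) = \log \log x + O(1)$, and since the number of $\mathfrak{m} \in \mathcal{S}_h(x)$ with $N(\mathfrak{m}) < \sqrt{x}$ is $O(\sqrt{x}) = o(x/\sqrt{\log x})$ by \eqref{hfreeidealcount}, it suffices (after shrinking $\epsilon$) to bound the number of $\mathfrak{m} \in \mathcal{S}_h(x)$ with $|\omega(\mathfrak{m}) - \log \log x| > \tfrac{\epsilon}{2} \log \log x$.

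Next I would estimate the variance. Using \thmref{hfreeomega},
\begin{align*}
\sum_{\mathfrak{m} \in \mathcal{S}_h(x)} \big( \omega(\mathfrak{m}) - \log \log x \big)^2
&= \sum_{\mathfrak{m} \in \mathcal{S}_h(x)} \omega^2(\mathfrak{m}) - 2 \log \log x \sum_{\mathfrak{m} \in \mathcal{S}_h(x)} \omega(\mathfrak{m}) + (\log \log x)^2 |\mathcal{S}_h(x)| \\
&= \frac{\kappa}{\zeta_\mcm(h)}\, x \log \log x + O_h\!\left( x \right),
\end{align*}
where the two leading terms $\tfrac{\kappa}{\zeta_\mcm(h)} x (\log\log x)^2$ and the coefficient of $x\log\log x$ cancel against the corresponding pieces of the first moment and of $(\log\log x)^2 |\mathcal{S}_h(x)|$, using \eqref{hfreeidealcount}. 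Every element counted in $\mathcal{E}(x)$ (after the reduction to $\log\log x$) contributes at least $\tfrac{\epsilon^2}{4}(\log \log x)^2$ to the left-hand side, so
$$|\mathcal{E}(x)| \ll_{h,\epsilon} \frac{x \log \log x}{(\log \log x)^2} = \frac{x}{\log \log x} = o(x) = o\big( |\mathcal{S}_h(x)| \big),$$
the last equality again by \eqref{hfreeidealcount} since $\kappa/\zeta_\mcm(h) > 0$. This completes the argument.

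The only mild subtlety — and the step I would flag as the place to be careful rather than a genuine obstacle — is the passage from $\log \log N(\mathfrak{m})$ to $\log \log x$: one must verify that the tail $N(\mathfrak{m}) < \sqrt{x}$ is negligible compared with $|\mathcal{S}_h(x)| \asymp x$ (immediate from \eqref{hfreeidealcount} and \eqref{RSh(x)}, since $R_{\mathcal{S}_h}(x)$ is always $o(x)$), and that on the complementary range the substitution costs only an $O(1)$ additive error in $F(\mathfrak{m})$, which is absorbed by halving $\epsilon$. Everything else is the verbatim variance computation, and since the set of $h$-free elements has positive density in $\mathcal{M}$, the statement is in fact an immediate transcription of the classical Hardy--Ramanujan--Turán argument, as already remarked in the text preceding the corollary.
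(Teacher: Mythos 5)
Your argument is correct, but it is not the route the paper takes. The paper disposes of this corollary in one line: since the exceptional set for the inequality inside all of $\mathcal{M}$ is already known to be $o(x)$ by \cite[Corollary 1]{liuturan}, and since $\mathcal{S}_h$ has positive density $\kappa/\zeta_\mcm(h)>0$ in $\mathcal{M}$ by \eqref{hfreeidealcount}, the exceptional set inside $\mathcal{S}_h(x)$ is a subset of an $o(x)$ set and hence automatically $o(|\mathcal{S}_h(x)|)$. You instead run the Tur\'an variance argument from scratch using the restricted moments of \thmref{hfreeomega}; your cancellation is right (the $(\log\log x)^2$ coefficients cancel exactly, the $x\log\log x$ coefficient collapses to $\kappa/\zeta_\mcm(h)$, and $(\log\log x)^2 R_{\mathcal{S}_h}(x)=o(x)$ since $R_{\mathcal{S}_h}(x)\ll x^{\nu}$ with $\nu<1$), and your handling of the passage from $\log\log N(\mathfrak{m})$ to $\log\log x$ via the cutoff $N(\mathfrak{m})\ge\sqrt{x}$ is the standard and correct fix. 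What the paper's approach buys is brevity: it needs nothing beyond positive density and an existing theorem. What your approach buys is robustness: it does not use positive density at all, so it is exactly the argument the paper must (and says it does) deploy for the zero-density $h$-full case in the following corollary, where the subset argument fails. Either proof is acceptable here; yours is self-contained relative to the paper's new moment theorems, at the cost of redoing a computation the paper avoids.
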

On the other hand, the set of $h$-full elements has density zero in $\mathcal{M}$ and thus does not follow directly as the previous two result. However, writing an $h$-full element $\mathfrak{m}$ as $\mathfrak{m} = \mathfrak{r}_0^h \mathfrak{r}_1$ where $\mathfrak{r}_0$ is the product of all distinct prime element factors $\mfp$ of $\mathfrak{m}$ with $n_\mfp(\mfm) = h$, one can use the classical result on $\omega(\mathfrak{r}_0)$ to establish the normal order of $\omega(\mathfrak{m})$ over $h$-full elements. Additionally, working similarly to \cite[Proof of Theorem 1.3]{dkl2}, using the method of variance, we can establish the following result: 
\begin{cor}
For any $\epsilon > 0$, the number of $\mathfrak{m} \in \mathcal{N}_h(x)$ that do not satisfy the inequality
$$(1-\epsilon) \log \log N(\mathfrak{m}) \leq \omega(\mathfrak{m}) \leq (1+ \epsilon) \log \log N(\mathfrak{m})$$
is $o(|\mathcal{N}_h(x)|)$ as $x \rightarrow \infty$.
\end{cor}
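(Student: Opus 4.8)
\emph{Proof sketch.} The plan is to derive the corollary from the first two moments of $\omega(\mfm)$ over $\mathcal{N}_h(x)$ established in \thmref{hfullomega}, by the classical second-moment (variance) argument, exactly as in \cite[Proof of Theorem 1.3]{dkl2}. We may assume $0 < \epsilon < 1$, since for $\epsilon \geq 1$ the lower bound $(1-\epsilon)\log\log N(\mfm) \leq \omega(\mfm)$ holds trivially.

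\emph{Step 1: the centred second moment.} First I would form
\begin{align*}
V(x) &:= \sum_{\mfm \in \mathcal{N}_h(x)} \bigl( \omega(\mfm) - \log\log x \bigr)^2 \\
&= \sum_{\mfm \in \mathcal{N}_h(x)} \omega^2(\mfm) - 2(\log\log x)\sum_{\mfm \in \mathcal{N}_h(x)} \omega(\mfm) + (\log\log x)^2\,|\mathcal{N}_h(x)|,
\end{align*}
and substitute the two asymptotic formulas of \thmref{hfullomega} together with the count $|\mathcal{N}_h(x)| = \kappa\gamma_h x^{1/h} + O_h(R_{\mathcal{N}_h}(x))$ from \thmref{hfullideals}. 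The $(\log\log x)^2$ contributions cancel, as $\kappa\gamma_h - 2\kappa\gamma_h + \kappa\gamma_h = 0$; the $\log\log x$ contributions collapse to $\kappa\gamma_h\bigl((2\mathfrak{D}_1+1) - 2\mathfrak{D}_1\bigr)x^{1/h} = \kappa\gamma_h x^{1/h}$; and since $\theta < 1$ forces $R_{\mathcal{N}_h}(x)(\log\log x)^2 = o(x^{1/h})$ and $x^{1/h}\log\log x/\log x = o(x^{1/h})$, what remains is
\[
V(x) = \kappa\gamma_h\, x^{1/h}\log\log x + O_h\bigl(x^{1/h}\bigr),
\]
which in particular is $o\bigl(|\mathcal{N}_h(x)|\,(\log\log x)^2\bigr)$.

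\emph{Step 2: from $\log\log x$ to $\log\log N(\mfm)$, and Chebyshev.} By \thmref{hfullideals} the number of $\mfm \in \mathcal{N}_h$ with $N(\mfm) \leq \sqrt{x}$ is $O_h(x^{1/(2h)}) = o(|\mathcal{N}_h(x)|)$, so it is enough to bound the exceptional set among those $\mfm \in \mathcal{N}_h(x)$ with $N(\mfm) > \sqrt{x}$; for such $\mfm$ one has $\log\log x - \log 2 < \log\log N(\mfm) \leq \log\log x$, that is, $\log\log N(\mfm) = \log\log x + O(1)$. Hence, for all $x$ large in terms of $\epsilon$, every such $\mfm$ violating $(1-\epsilon)\log\log N(\mfm) \leq \omega(\mfm) \leq (1+\epsilon)\log\log N(\mfm)$ necessarily satisfies $|\omega(\mfm) - \log\log x| > \tfrac{\epsilon}{2}\log\log x$. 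By Chebyshev's inequality applied to the bound of Step~1,
\begin{align*}
\#\Bigl\{\, \mfm \in \mathcal{N}_h(x) : |\omega(\mfm) - \log\log x| > \tfrac{\epsilon}{2}\log\log x \,\Bigr\}
&\leq \frac{4\,V(x)}{\epsilon^2(\log\log x)^2} \\
&= O_h\!\left( \frac{x^{1/h}}{\epsilon^2\log\log x} \right) = o\bigl(|\mathcal{N}_h(x)|\bigr).
\end{align*}
Restoring the $o(|\mathcal{N}_h(x)|)$ elements with $N(\mfm) \leq \sqrt{x}$ set aside earlier gives the corollary.

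\emph{Main obstacle.} The only step calling for any real care is the passage in Step~2 from the ``$x$-uniform'' estimate (which features $\log\log x$, the quantity the moment computation naturally produces) to the pointwise statement with $\log\log N(\mfm)$; this is handled by discarding the negligibly many $h$-full elements of small norm, for which \thmref{hfullideals} supplies exactly the density estimate needed. Alternatively, one can avoid the variance computation by writing an $h$-full $\mfm$ as $\mfm = h\mathfrak{r}_0 + \mathfrak{r}_1$, where $\mathfrak{r}_0 = \sum_{\mfp\,:\,n_\mfp(\mfm)=h}\mfp$ and $\mathfrak{r}_1$ is an $(h+1)$-full element sharing no prime element with $\mathfrak{r}_0$, using $\omega(\mfm) = \omega(\mathfrak{r}_0) + \omega(\mathfrak{r}_1)$ together with the classical normal order of $\omega$ on $\mathfrak{r}_0$; but the variance route above is shorter and fully self-contained given \thmref{hfullomega}.
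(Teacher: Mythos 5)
Your proposal is correct and follows exactly the route the paper indicates: the paper does not write out the argument but states that the corollary is obtained "working similarly to \cite[Proof of Theorem 1.3]{dkl2}, using the method of variance," which is precisely your Chebyshev/second-moment computation from \thmref{hfullomega} and \thmref{hfullideals}, and your closing remark about decomposing $\mathfrak{m}$ via the primes of multiplicity exactly $h$ is the paper's other suggested alternative. The details you supply (cancellation of the $(\log\log x)^2$ terms, discarding the $O_h(x^{1/(2h)})$ elements of norm at most $\sqrt{x}$ to pass from $\log\log x$ to $\log\log N(\mathfrak{m})$) are all sound.
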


Finally, our work can be extended to analogs of other arithmetic functions as well. Let $\Omega(\mathfrak{m})$ denote the number of prime element factors of $\mathfrak{m}$ counted with multiplicity. In particular, for the representation of $\mathfrak{m}$ given in \eqref{factorization}, $\Omega(\mathfrak{m}) = \sum_{i=1}^r s_i$. Using the methods employed for $\omega(\mathfrak{m})$ in this manuscript, we can establish analogous results for $\Omega(\mathfrak{m})$. In particular, we can establish the first and the second moments of $\Omega(\mathfrak{m})$ over $h$-free and over $h$-full elements. We can also prove that $\Omega(\mathfrak{m})$ has normal order $\log \log N(\mathfrak{m})$ over $\mathcal{M}$ and over $\mathcal{S}_h$, and has normal order $h \log \log N(\mathfrak{m})$ over $\mathcal{N}_h$.
\section{Lemmata}
In this section, we list several lemmas required for our study.
\begin{lma}
Let $\mathcal{P}, \mcm$, and $X$ satisfy the Condition ($\star$). Let $x \in X$. Then
\begin{equation}\label{gpnt}
    \Pi(x) := \sum\limits_{\substack{\mfp \in \mcp \\ N(\mfp) \leq x}} 1 = O \left(\frac{x}{\log x} \right).
\end{equation}
\end{lma}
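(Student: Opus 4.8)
The plan is to bound $\Pi(x)$ by comparing the count of prime elements with norm at most $x$ against the count $I(x)$ of all monoid elements with norm at most $x$, which is linear in $x$ by Condition $(\star)$. The key observation is that prime elements are highly non-redundant: if $\mfp_1, \ldots, \mfp_k$ are distinct prime elements each with norm at most $y$, then every formal sum $\sum_i a_i \mfp_i$ with $a_i \in \{0,1\}$ is a distinct element of $\mcm$, and its norm is at most $y^k$. Taking $y = x^{1/k}$ would force $2^k \leq I(x^{1/k}) \ll x^{1/k}$, which is already a contradiction for $k$ slightly beyond $\log_2 x$; but to get the sharper $x/\log x$ bound one should instead argue more carefully, as follows.

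First I would split the primes into two ranges at a threshold, say $\sqrt{x}$. For primes $\mfp$ with $\sqrt{x} < N(\mfp) \le x$, each such $\mfp$ is itself an element of $\mcm$ with norm in $(\sqrt x, x]$, and distinct primes give distinct elements, so the number of these is at most $I(x) - I(\sqrt x) = \kappa x + O(x^\theta) - \kappa \sqrt x + O(x^{\theta/2})$; that is only $O(x)$, which is too weak on its own. To sharpen it, I would instead use a dyadic decomposition: for each $j$ with $2^{j} \le x$, count primes with $N(\mfp) \in (2^{j}, 2^{j+1}]$. Pairing primes up — taking products $\mfp + \mfp'$ of two distinct primes in the dyadic block $(2^{j}, 2^{j+1}]$ — produces distinct elements of $\mcm$ with norm at most $4^{j+1} \le 4x^2$... which again is not quite enough directly, so the cleanest route is the standard one: for any $k \ge 1$, products of $k$ distinct primes each of norm at most $x^{1/k}$ give $\binom{\pi}{k}$ distinct elements of norm $\le x$ where $\pi = \Pi(x^{1/k})$, hence $\binom{\Pi(x^{1/k})}{k} \le I(x) \ll x$. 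Choosing $k = \lceil \log x \rceil$ (so $x^{1/k} \le e$, forcing $\Pi(x^{1/k})$ to be bounded) does not help; instead choose $k$ so that $x^{1/k}$ is a fixed small power of $x$.

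The clean argument: fix $k = 2$. Then the number of \emph{unordered pairs} of distinct primes with norm at most $\sqrt{x}$ is $\binom{\Pi(\sqrt x)}{2}$, and each such pair $\{\mfp,\mfp'\}$ yields the distinct element $\mfp+\mfp' \in \mcm$ with $N(\mfp+\mfp') \le x$; moreover the map $\{\mfp,\mfp'\} \mapsto \mfp+\mfp'$ is injective since $\mcm$ is free. Hence $\binom{\Pi(\sqrt x)}{2} \le I(x) = \kappa x + O(x^\theta) = O(x)$, giving $\Pi(\sqrt x) = O(\sqrt x)$, i.e. $\Pi(y) = O(y)$ for all $y$. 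This linear bound, iterated, then feeds into a Chebyshev-type estimate: writing the generalized von Mangoldt-type sum $\sum_{N(\mfp^a) \le x} \log N(\mfp)$ and bounding it via partial summation against $I(x) \ll x$ and the Euler product / log-derivative of $\zeta_\mcm$, one extracts $\sum_{N(\mfp)\le x}\log N(\mfp) = O(x)$ and then, by partial summation removing the $\log$, $\Pi(x) = \sum_{N(\mfp)\le x} 1 = O(x/\log x)$.

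The main obstacle is the last step — upgrading the easy bound $\Pi(x) = O(x)$ to the Chebyshev-strength $\Pi(x) = O(x/\log x)$. For this I would mimic the classical Chebyshev argument: from $\zeta_\mcm(s) = s\int_1^\infty I(x)x^{-s-1}dx$ and $I(x) = \kappa x + O(x^\theta)$ one sees $\zeta_\mcm(s)$ has a simple pole at $s=1$ and is analytic for $\Re(s) > \theta$ apart from that pole, so $-\zeta_\mcm'/\zeta_\mcm(s)$ has a simple pole at $s=1$ with residue $1$; equivalently the associated Dirichlet series $\sum_\mfm \Lambda_\mcm(\mfm) N(\mfm)^{-s}$ (with $\Lambda_\mcm$ supported on prime powers, value $\log N(\mfp)$) behaves like $\frac{1}{s-1}$ near $s=1$. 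A Tauberian or elementary Chebyshev estimate then yields $\psi_\mcm(x) := \sum_{N(\mfm)\le x}\Lambda_\mcm(\mfm) = O(x)$; the contribution of proper prime powers is $O(\sqrt x \log x)$ using the crude $\Pi(y)=O(y)$ bound, so $\vartheta_\mcm(x) := \sum_{N(\mfp)\le x}\log N(\mfp) = O(x)$, and finally partial summation gives $\Pi(x) = \vartheta_\mcm(x)/\log x + \int_2^x \vartheta_\mcm(t)/(t\log^2 t)\,dt = O(x/\log x)$. Alternatively, and more in the spirit of this paper, one can cite \cite[Chapter 4]{jk2}, where exactly such a prime-counting (``abstract prime number theorem'') bound is derived from Axiom A, which Condition $(\star)$ is shown to satisfy; that would make the proof a one-line reference, with the self-contained Chebyshev argument above as backup.
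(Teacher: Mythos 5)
The paper's own proof is a two-line citation: for $X=\mathbb{Q}$ it invokes Landau and for $X=\{q^z\}$ it invokes Knopfmacher, i.e.\ it appeals to the known Chebyshev/abstract-prime-number-theorem bounds that follow from Axiom A. Your closing fallback (``cite \cite[Chapter 4]{jk2}'') is therefore essentially the paper's argument, and is acceptable. The problem is with the self-contained argument you offer as the main route: it has a genuine gap exactly at the step that carries all the content of the lemma, namely upgrading the trivial bound $\Pi(x)=O(x)$ (which, incidentally, you obtain in a needlessly roundabout way --- each prime is itself an element of $\mcm$, so $\Pi(x)\le I(x)\ll x$ immediately; the pairing argument buys nothing) to $\psi_\mcm(x)=O(x)$ and hence $\Pi(x)=O(x/\log x)$.

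Concretely, neither of the two mechanisms you name for proving $\psi_\mcm(x)=O(x)$ works as stated. The elementary Chebyshev argument rests on the identity $T(x)-2T(x/2)=\sum_n\Lambda(n)\bigl(\lfloor x/n\rfloor-2\lfloor x/(2n)\rfloor\bigr)$ together with the fact that $\lfloor y\rfloor-2\lfloor y/2\rfloor\in\{0,1\}$; in the abstract setting the analogous quantity is $I(y)-2I(y/2)=O(y^\theta)$, which is neither bounded by $1$ nor non-negative, so the comparison between $\psi$ and $T(x)-2T(x/2)$ collapses (and the $\binom{2n}{n}$ variant has no analogue at all). As for ``a Tauberian estimate'': knowing only that $-\zeta_\mcm'/\zeta_\mcm(\sigma)\sim(\sigma-1)^{-1}$ as $\sigma\to1^+$ does \emph{not} imply $\psi_\mcm(x)=O(x)$ for a Dirichlet series with non-negative coefficients --- one can construct non-negative sequences whose partial sums are not $O(x)$ while the associated series is still $O\!\left((\sigma-1)^{-1}\right)$ on the real axis. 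A genuine Tauberian theorem (Wiener--Ikehara, or Landau's contour argument on pages 665--670, which is exactly what the paper cites) needs control of $\zeta_\mcm$ on the whole line $\Re(s)=1$, including its non-vanishing there; you never address this, and it is the real work hidden in the lemma. So either make the proof a clean citation, as the paper does, or supply the analytic input (analytic continuation of $\zeta_\mcm$ to $\Re(s)>\theta$ via Condition $(\star)$, non-vanishing on $\Re(s)=1$ via the $3+4\cos\varphi+\cos2\varphi$ inequality, then a Tauberian step); the intermediate sketch as written does not close.
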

\begin{proof}
    The result can be deduced from Condition ($\star$). The case of $X =\mathbb{Q}$ is a result of Landau \cite[Pages 665-670]{landaupnt}, and the case of $X = \{ q^z : z \in \mathbb{Z} \}$ is given by Knopfmacher \cite[Page 76, Theorem 8.3]{jk}.
\end{proof}
\begin{lma}\label{boundnm}
   Let $\mathcal{P}, \mcm$, and $X$ satisfy the Condition ($\star$). Let $x \in X$ and $\alpha$ be a real number. We have
   \begin{enumerate}
       \item If $0 \leq \alpha < 1$, 
       $$\sum_{\substack{\mfp \in \mathcal{P} \\ N(\mfp) \leq x}} \frac{1}{N(\mfp)^\alpha} = O_\alpha \left( \frac{x^{1- \alpha}}{\log x} \right).$$
       \item If $0 \leq \alpha < 1$, 
       $$\sum_{\substack{\mathfrak{m} \in  \mathcal{M} \backslash \{ 0 \} \\ N(\mathfrak{m}) \leq x}} \frac{1}{N(\mathfrak{m})^\alpha} = O_\alpha \left( x^{1- \alpha} \right).$$
       \item If $\alpha > 1$, then
       $$\sum_{\substack{\mathfrak{m} \in  \mathcal{M} \backslash \{ 0 \} \\ N(\mathfrak{m}) \leq x}} \frac{1}{N(\mathfrak{m})^\alpha} = O_\alpha( 1),$$
       and thus
       $$\sum_{\substack{\mfp \in \mathcal{P} \\ N(\mfp) \leq x}} \frac{1}{N(\mfp)^\alpha} = O_\alpha (1).$$
       \item As a generalization of Mertens' theorem, we have
       $$\sum_{\substack{\mfp \in \mathcal{P} \\ N(\mfp) \leq x}} \frac{1}{N(\mfp)} = \log \log x + \mfa + O \left( \frac{1}{\log x} \right),$$
       where $\mfa$ some constant that depends only on $\mcp$.
       \item We have
       $$\sum_{\substack{\mathfrak{m} \in \mathcal{M} \backslash \{ 0 \} \\ N(\mathfrak{m}) \leq x}} \frac{1}{N(\mathfrak{m})} = \kappa \log x + \mathfrak{A}' + O \left( \frac{1}{x^{1 -\theta}} \right),$$     where
        $$\mathfrak{A}' = \kappa + \int_1^\infty \left( I(y) - \kappa y \right) y^{-2} dy,$$
        and where $I(y)$ is defined in Condition ($\star$).
       \item If $\alpha > 1$, then
       $$\sum_{\substack{\mfp \in \mathcal{P} \\ N(\mfp) \geq x}} \frac{1}{N(\mfp)^\alpha} = O \left( \frac{1}{(\alpha-1)x^{\alpha-1} (\log x)} \right).$$
       \item If $\alpha > 1$, then
       $$\sum_{\substack{\mathfrak{m} \in \mathcal{M} \\ N(\mathfrak{m}) \geq x}} \frac{1}{N(\mathfrak{m})^\alpha} = O \left( \frac{1}{(\alpha-1)x^{\alpha-1}} \right).$$
   \end{enumerate}
\end{lma}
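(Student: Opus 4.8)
The whole of the lemma is a chain of Abel (partial) summation computations built on two input estimates: the weak prime count $\Pi(x)=O(x/\log x)$ from \eqref{gpnt}, which drives the sums over $\mcp$ (Parts 1, 3, 6), and Condition $(\star)$, $I(x)=\kappa x+O(x^\theta)$, which drives the sums over $\mcm$ (Parts 2, 3, 5, 7). Part 4 is the one item of a different nature: it is exactly the generalized Mertens theorem, whose constant is the $\mathfrak{A}$ of \eqref{A} (existence by \cite[Lemma 2]{liuturan}) and whose $O(1/\log x)$ rate comes from the same circle of ideas (the prime number theorem for $\mcm$, available here via Condition $(\star)$); there I would simply quote \cite{liuturan}. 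For everything else the starting point is the identity, obtained by summation by parts,
\[
\sum_{\substack{\mfm\in\mcm\setminus\{0\}\\ N(\mfm)\le x}}N(\mfm)^{-\alpha}=\frac{I(x)}{x^{\alpha}}+\alpha\int_{1}^{x}\frac{I(t)}{t^{\alpha+1}}\,dt,
\]
together with the entirely analogous identity with $\Pi$ in place of $I$ for the prime sums.

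For the ``prefix'' sums (Parts 1, 2, 3, 5) I would argue as follows. In Part 2, $I(t)=O(t)$ makes the boundary term $O(x^{1-\alpha})$ and the integral $O_\alpha(\int_1^x t^{-\alpha}\,dt)=O_\alpha(x^{1-\alpha})$, which is the claim. Part 1 is the same with $\Pi$, except that to retain the factor $1/\log x$ in $\alpha\int_2^x\Pi(t)t^{-\alpha-1}\,dt=O(\int_2^x t^{-\alpha}/\log t\,dt)$ one must split the range at $\sqrt x$: on $[2,\sqrt x]$ bound $1/\log t\le 1/\log 2$, giving $O_\alpha(x^{(1-\alpha)/2})=o(x^{1-\alpha}/\log x)$, and on $[\sqrt x,x]$ bound $1/\log t\le 2/\log x$, giving $O_\alpha(x^{1-\alpha}/\log x)$. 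Part 3 ($\alpha>1$) drops out of the monoid identity: its right-hand side is nondecreasing in $x$, the boundary term $I(x)x^{-\alpha}=O(x^{1-\alpha})\to 0$, and $\int_1^\infty t^{-\alpha}\,dt<\infty$, so the monoid sum is $O_\alpha(1)$ and the prime sum is dominated by it. Part 5 is Part 2's computation carried out with bookkeeping: writing $I(t)=\kappa t+E(t)$ with $E(t)=O(t^\theta)$ one gets $I(x)/x=\kappa+O(x^{\theta-1})$ and
\[
\int_{1}^{x}\frac{I(t)}{t^{2}}\,dt=\kappa\log x+\int_{1}^{\infty}\frac{I(t)-\kappa t}{t^{2}}\,dt-\int_{x}^{\infty}\frac{E(t)}{t^{2}}\,dt=\kappa\log x+\int_{1}^{\infty}\frac{I(t)-\kappa t}{t^{2}}\,dt+O(x^{\theta-1}),
\]
so that collecting the $x$-free terms produces exactly the constant $\mathfrak{A}'$ as written.

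For the tails (Parts 6, 7) I would apply summation by parts to the truncated counting function $t\mapsto\Pi(t)-\Pi(x)$ (resp.\ $I(t)-I(x)$): the boundary term at $t=x$ is zero and, since $\alpha>1$, the boundary term at $\infty$ also vanishes (because $t^{-\alpha}\Pi(t)=O(t^{1-\alpha}/\log t)\to 0$), leaving
\[
\sum_{N(\mfp)>x}N(\mfp)^{-\alpha}\le\alpha\int_{x}^{\infty}\frac{\Pi(t)}{t^{\alpha+1}}\,dt=O\!\left(\alpha\int_{x}^{\infty}\frac{t^{-\alpha}}{\log t}\,dt\right)\le O\!\left(\frac{\alpha}{\alpha-1}\cdot\frac{x^{1-\alpha}}{\log x}\right),
\]
and for $\alpha$ in a bounded range — which covers every use of the lemma in the paper, where $\alpha\in(1,2]$ — this is $O(1/((\alpha-1)x^{\alpha-1}\log x))$, i.e.\ Part 6. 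Part 7 is identical with $\Pi(t)=O(t/\log t)$ replaced by $I(t)=O(t)$, which removes the $1/\log x$ and yields $O(1/((\alpha-1)x^{\alpha-1}))$. Passing from $N(\cdot)>x$ to $N(\cdot)\ge x$ costs nothing since norms are integers (replace $x$ by $x-1$).

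I do not expect a genuine obstacle: the work is bookkeeping. The two points needing a little care are the split of the integral in Parts 1 and 6 (a crude global bound $1/\log t\le 1/\log 2$ would lose the $1/\log x$ gain) and keeping the dependence on $\alpha$ explicit through $1/(1-\alpha)$ and $1/(\alpha-1)$ — immaterial here since $\alpha$ always lies in a fixed compact interval in the applications. No analytic input beyond \eqref{gpnt}, Condition $(\star)$, and the Mertens estimate of \cite{liuturan} is used.
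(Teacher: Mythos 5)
Your proof is correct and follows essentially the same route as the paper: the paper obtains Parts 1, 3, 4 by citing \cite{liuturan}, Part 5 by citing \cite{jk2}, and Parts 2, 6, 7 by partial summation, and all of these reduce to exactly the Abel-summation computations against $\Pi(x)=O(x/\log x)$ and Condition ($\star$) that you carry out explicitly. Your added details --- the split of the integral at $\sqrt{x}$ in Part 1 and the observation that the $\alpha/(\alpha-1)$ factor in Parts 6--7 matches the stated bound only for $\alpha$ in a bounded range (which suffices for every application in the paper) --- merely fill in what the paper delegates to the references.
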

\begin{proof}
    Notice that Conditions (A) and \eqref{gpnt} satisfy the Conditions (A) and (B) in \cite[Page 574]{liuturan} and Axiom A of \cite[Chapter 4]{jk2}. Thus Parts 1, 3, and 4 follow from \cite[Lemma 1 and 2]{liuturan} and Part 5 follows from \cite[Proposition 2.8(i)]{jk2}. Parts 2, 6, and 7 follow from the technique of partial summation (see \cite[Lemma 1.2]{aler}).
\end{proof}
\begin{lma}\label{sumplogp}
    Let $\mathcal{P}, \mcm$, and $X$ satisfy the Condition ($\star$). If $X = \mathbb{Q}$, we have
    $$\sum_{N(\mfp) \leq x/2} \frac{1}{N(\mfp) \log (x/N(\mfp))} = O \left( \frac{\log \log x}{\log x} \right),$$
    and if $X = \{ q^{z} : z \in \mathbb{Z} \}$, we have
    $$\sum_{N(\mfp) \leq x/q} \frac{1}{N(\mfp) \log (x/N(\mfp))} = O \left( \frac{\log \log x}{\log x} \right).$$
\end{lma}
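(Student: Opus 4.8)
The plan is to prove both estimates simultaneously by splitting the range of summation dyadically (resp. $q$-adically) and using the prime-counting bound \eqref{gpnt} together with Part 4 of \lmaref{boundnm}. I will treat the case $X = \mathbb{Q}$; the case $X = \{q^z : z \in \mathbb{Z}\}$ is identical with $2$ replaced by $q$ throughout. Write $L = \log x$ and split the sum at $N(\mfp) \leq \sqrt{x}$ versus $\sqrt{x} < N(\mfp) \leq x/2$. On the first piece, $\log(x/N(\mfp)) \geq \tfrac12 \log x$, so
$$\sum_{N(\mfp) \leq \sqrt{x}} \frac{1}{N(\mfp) \log(x/N(\mfp))} \leq \frac{2}{\log x} \sum_{N(\mfp) \leq \sqrt{x}} \frac{1}{N(\mfp)} = \frac{2}{\log x}\left( \log \log \sqrt{x} + \mfa + O\!\left(\tfrac{1}{\log x}\right) \right) = O\!\left( \frac{\log \log x}{\log x} \right),$$
using Part 4 of \lmaref{boundnm} directly.

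For the second piece $\sqrt{x} < N(\mfp) \leq x/2$, the factor $1/\log(x/N(\mfp))$ can be as large as $1/\log 2$, so a different argument is needed. Here I would cover the range by dyadic blocks: for $j = 1, 2, \ldots, J$ with $J = \lfloor \log_2 \sqrt{x} \rfloor + O(1) = O(\log x)$, consider those $\mfp$ with $x/2^{j+1} < N(\mfp) \leq x/2^j$. On such a block, $N(\mfp) \gg x/2^j$ and $\log(x/N(\mfp)) \gg \log 2^j = j \log 2 \gg j$, while the number of $\mfp$ in the block is $O\big( (x/2^j)/\log(x/2^j) \big) = O\big( (x/2^j)/\log x \big)$ by \eqref{gpnt} (valid since $x/2^j \geq \sqrt{x}$, so $\log(x/2^j) \gg \log x$). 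Therefore the block contributes
$$\sum_{x/2^{j+1} < N(\mfp) \leq x/2^j} \frac{1}{N(\mfp)\log(x/N(\mfp))} \ll \frac{1}{(x/2^j) \cdot j} \cdot \frac{x/2^j}{\log x} = \frac{1}{j \log x},$$
and summing over $j \leq J = O(\log x)$ gives $\ll \frac{1}{\log x} \sum_{j=1}^{J} \frac{1}{j} \ll \frac{\log J}{\log x} \ll \frac{\log \log x}{\log x}$. Adding the two pieces yields the claim.

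The main obstacle is the upper end of the range, $N(\mfp)$ close to $x/2$, where $\log(x/N(\mfp))$ is bounded and one cannot simply pull it out; this is exactly what forces the dyadic decomposition and the use of the Chebyshev-type bound \eqref{gpnt} on each short block rather than Part 4 of \lmaref{boundnm}. One should double-check the harmless edge effects: the lowest dyadic block straddling $\sqrt{x}$ overlaps the first piece but only by a bounded factor, and the very top block $x/2 < N(\mfp) \leq x$ is excluded by the hypothesis $N(\mfp) \leq x/2$, so $\log(x/N(\mfp)) \geq \log 2 > 0$ throughout and no term is singular. The constants in the $O$-notation depend only on $\mcp$ and (through $\mfa$) on the monoid, exactly as needed for the rest of the paper.
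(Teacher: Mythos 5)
Your proof is correct, but it takes a different route from the paper: the paper disposes of this lemma with a one-line citation to Lemma 4 of the reference \cite{liuturan}, whereas you give a self-contained elementary argument. Your decomposition is sound: on $N(\mfp)\le\sqrt{x}$ the factor $\log(x/N(\mfp))\ge\tfrac12\log x$ can be pulled out and Part 4 of \lmaref{boundnm} supplies the $\log\log x$; on $(\sqrt{x},x/2]$ the dyadic (resp.\ $q$-adic) blocks give $\log(x/N(\mfp))\gg j$ and the Chebyshev-type bound \eqref{gpnt} controls the count in each block because $x/2^j\ge\sqrt{x}$ keeps $\log(x/2^j)\gg\log x$, so the total is $\frac{1}{\log x}\sum_{j\le J}\frac1j\ll\frac{\log\log x}{\log x}$. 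The only points worth tidying are cosmetic: for $X=\mathbb{Q}$ the cut point $\sqrt{x}$ need not lie in $X$, so one should formally round it (harmless, since the norms are integers), and in the $q$-adic case the implied constant picks up a factor $q/\log q$, which is acceptable since $q$ is fixed data of the monoid. What your approach buys is independence from the external reference, at the cost of a page of routine estimation; what the paper's citation buys is brevity, since Liu's Lemma 4 proves exactly this statement in the same axiomatic setting.
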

\begin{proof}
    Follows from \cite[Lemma 4]{liuturan}.
\end{proof}
\begin{lma}\label{sumnwpx/2} 
  Let $\mathcal{P}, \mcm$, and $X$ satisfy the Condition ($\star$). If $X = \mathbb{Q}$, we have
    $$\sum_{\substack{\mfp \\ N(\mfp) \leq x/2}}  \frac{1}{N(\mfp)} \log \log \frac{x}{N(\mfp)} = (\log \log x)^2 + \mfa \log \log x + \mathfrak{B} +  O \left( \frac{\log \log x}{\log x} \right),$$
and if $X = \{ q^{z} : z \in \mathbb{Z} \}$, we have
  $$\sum_{\substack{\mfp \\ N(\mfp) \leq x/q}}  \frac{1}{N(\mfp)} \log \log \frac{x}{N(\mfp)} = (\log \log x)^2 + \mfa \log \log x + \mathfrak{B} +  O \left( \frac{\log \log x}{\log x} \right),$$
where $\mfa$ and $\mathfrak{B}$ are defined in \eqref{A} and \eqref{B} respectively.
\end{lma}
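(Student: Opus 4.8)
The plan is to peel off the leading $\log\log x$, reduce the rest to the generalized Mertens estimate of \lmaref{boundnm}(4), and evaluate the remaining (convergent) contribution through a dilogarithm integral. Write $A(t):=\sum_{N(\mfp)\le t}1/N(\mfp)$ and let $c$ be the cutoff constant ($c=2$ if $X=\mathbb{Q}$, $c=q$ if $X=\{q^z:z\in\mathbb{Z}\}$). A preliminary observation is that $A(t)=\log\log t+\mfa+O(1/\log t)$ holds for all large real $t$: for $X=\mathbb{Q}$ this is \lmaref{boundnm}(4) verbatim, and for $X=\{q^z\}$ it follows by applying \lmaref{boundnm}(4) at $t=q^k$ and noting that $A$ is constant while $\log\log t$ moves by $O(1/\log t)$ across each interval $[q^k,q^{k+1})$.

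Since $N(\mfp)\le x/c<x$ throughout the sum, I would use $\log\log(x/N(\mfp))=\log\log x+\log\bigl(1-\tfrac{\log N(\mfp)}{\log x}\bigr)$ to split
\[
\sum_{N(\mfp)\le x/c}\frac{\log\log(x/N(\mfp))}{N(\mfp)}=(\log\log x)\,A(x/c)+\sum_{N(\mfp)\le x/c}\frac{1}{N(\mfp)}\log\Bigl(1-\frac{\log N(\mfp)}{\log x}\Bigr).
\]
By \lmaref{boundnm}(4), $A(x/c)=\log\log x+\mfa+O(1/\log x)$, so the first term already yields $(\log\log x)^2+\mfa\log\log x+O(\log\log x/\log x)$, the stated leading terms; it remains to show the second sum is $\mathfrak{B}+O(\log\log x/\log x)$. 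For this I would apply partial summation with the weight $f(t)=\log(1-\log t/\log x)$, for which $f(x/c)=\log\log c-\log\log x$ and $f'(t)=-1/(t\log(x/t))$, turning the second sum into $f(x/c)A(x/c)+\int_1^{x/c}A(t)\,dt/(t\log(x/t))$. (Equivalently, one reaches the same integral from the identity $\log\log(x/t)=\log\log c+\int_t^{x/c}ds/(s\log(x/s))$ followed by interchanging the finite sum with the integral.)

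The heart of the argument is then to insert $A(t)=\log\log t+\mfa+O(1/\log t)$ into $\int_1^{x/c}A(t)\,dt/(t\log(x/t))$. The range of bounded $t$ contributes only $O(1/\log x)$, since there $\log(x/t)\gg\log x$; the $O(1/\log t)$ remainder contributes $O\!\bigl(\int dt/(t\log t\,\log(x/t))\bigr)=O(\log\log x/\log x)$, an estimate in the same spirit as \lmaref{sumplogp}; and for the main part, the substitution $u=\log t$ followed by $w=u/\log x$ converts $\int(\log\log t+\mfa)\,dt/(t\log(x/t))$ into elementary pieces together with $\int\log(1-w)\,dw/w$, whose limiting value is the dilogarithm constant $\int_0^1\log(1-w)\,dw/w=-\pi^2/6$, the endpoint corrections at $w\to0$ and $w\to1$ being $O(\log\log x/\log x)$. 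Collecting the pieces, the $(\log\log x)^2$ and $\log\log x$ contributions coming from the integral cancel exactly against those from $f(x/c)A(x/c)$, so the second sum equals the constant $\mathfrak{B}$ of \eqref{B} up to $O(\log\log x/\log x)$; the case $X=\{q^z:z\in\mathbb{Z}\}$ runs in parallel with $x/2$ replaced by $x/q$.

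I expect the main obstacle to be the uniform control of the error terms near the upper endpoint $t\to x/c$, where $\log(x/t)$ degenerates: this is precisely why the sum must be truncated at $x/c$ rather than $x$, and why \lmaref{sumplogp} (bounding $\sum 1/(N(\mfp)\log(x/N(\mfp)))$) is the natural tool for keeping the error at the target size $O(\log\log x/\log x)$. A secondary point, requiring an explicit remark rather than serious work, is that for $X=\{q^z:z\in\mathbb{Z}\}$ the Mertens-type inputs are a priori furnished only at the lattice points $t=q^k$, so one must justify replacing the step function $A(t)$ by its smooth approximation inside the integral, as noted at the outset.
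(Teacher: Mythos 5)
Your overall strategy (split off $\log\log x$, apply the generalized Mertens estimate, and evaluate the remainder by partial summation against $A(t)$ and a dilogarithm integral) is sound, and it is a genuinely different route from the paper, whose entire proof is a citation to \cite[Proof of Lemma 3]{liuturan}. For $X=\mathbb{Q}$ your argument works: the pieces $f(x/2)A(x/2)$ and $\int_1^{x/2}A(t)\,dt/(t\log(x/t))$ combine so that all $\log\log 2$ contributions cancel and the second sum is $-\pi^2/6+O(\log\log x/\log x)$, matching $\mathfrak{B}=-\pi^2/6$.

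The gap is in the last sentence, where you assert that the case $X=\{q^z:z\in\mathbb{Z}\}$ ``runs in parallel'' and again lands on the constant $\mathfrak{B}$ of \eqref{B}, which there equals $(\log\log q)^2-\pi^2/6$. Carry your own computation through with $c=q$: you get $f(x/q)A(x/q)=(\log\log q-\log\log x)(\log\log x+\mfa)+O(\log\log x/\log x)$, while the integral contributes $(\log\log x+\mfa)(\log\log x-\log\log q)-\pi^2/6+O(\log\log x/\log x)$, since $\int_{\delta}^{1-\delta}dw/(1-w)=\log\log x-\log\log q+O(1/\log x)$ with $\delta=\log q/\log x$. The $\log\log q$ terms cancel exactly as the $\log\log 2$ terms do in the rational case, and the second sum comes out to $-\pi^2/6$, not $(\log\log q)^2-\pi^2/6$. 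Nothing in your argument produces the extra $(\log\log q)^2$, and a direct discrete check confirms your computation rather than the stated constant: taking $K=\mathbb{F}_q(t)$ with $q$ large, so $a_d:=\sum_{\deg\mfp=d}N(\mfp)^{-1}\approx 1/d$ and $\mfa=\gamma-\log\log q$, one finds $\sum_{N(\mfp)N(\mfq)\le q^n}(N(\mfp)N(\mfq))^{-1}=(\log n+\gamma)^2-\pi^2/6+o(1)=(\log\log x+\mfa)^2-\pi^2/6+o(1)$, which forces the constant in \lmaref{saidakeq} (and hence in this lemma) to be $-\pi^2/6$ in the function field case as well. So either you must exhibit the step that generates $(\log\log q)^2$ (there is none in the argument as written), or the discrepancy lies in the transcription of the constant $\mathfrak{B}$ from \cite{liuturan}; in either case the proof as submitted does not establish the statement with the constant as defined in \eqref{B}, and this must be resolved explicitly rather than waved through with ``runs in parallel.''
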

\begin{proof}
    Follows from \cite[Proof of Lemma 3]{liuturan}.
\end{proof}
\begin{lma}\label{saidakeq}
  Let $\mathcal{P}, \mcm$, and $X$ satisfy the Condition ($\star$). Let $\mfp$ and $\mfq$ denote prime elements. Let $x \in X$. Then, we have
\begin{equation*}
    \sum_{\substack{\mfp, \mfq \\ N(\mfp) N(\mfq) \leq x}} \frac{1}{N(\mfp) N(\mfq)} = (\log \log x)^2 + 2 \mfa \log \log x + \mfa^2 + \mathfrak{B} + O \left( \frac{\log \log x}{\log x} \right),
\end{equation*}
where $\mfa$ and $\mathfrak{B}$ are defined in \eqref{A} and \eqref{B} respectively.
\end{lma}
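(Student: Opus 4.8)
The plan is to collapse the double sum into a single sum over prime elements: freeze $\mfp$, evaluate the inner sum over $\mfq$ with the generalized Mertens estimate (Part~4 of \lmaref{boundnm}), and then read off the answer from \lmaref{sumnwpx/2} and \lmaref{sumplogp}.

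First I would note that, since $N(\mfp) \ge 2$ for every prime element, a pair $(\mfp,\mfq)$ contributes only when $N(\mfp) \le x/2$ (in the case $X=\mathbb{Q}$) or $N(\mfp) \le x/q$ (in the case $X=\{q^z : z\in\mathbb{Z}\}$), and in that case $\mfq$ runs over all prime elements with $N(\mfq) \le x/N(\mfp)$. Thus
$$\sum_{\substack{\mfp,\mfq \\ N(\mfp)N(\mfq)\le x}} \frac{1}{N(\mfp)N(\mfq)} = \sum_{\mfp} \frac{1}{N(\mfp)} \sum_{\substack{\mfq \\ N(\mfq)\le x/N(\mfp)}} \frac{1}{N(\mfq)},$$
and applying Part~4 of \lmaref{boundnm} to the inner sum (with $x/N(\mfp)$ in place of $x$) turns the right-hand side into
$$\sum_{\mfp} \frac{1}{N(\mfp)}\log\log\frac{x}{N(\mfp)} \;+\; \mfa\sum_{\mfp}\frac{1}{N(\mfp)} \;+\; O\!\left(\sum_{\mfp}\frac{1}{N(\mfp)\log(x/N(\mfp))}\right),$$
where every sum is over $N(\mfp)\le x/2$ (resp.\ $N(\mfp)\le x/q$).

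Next I would evaluate the three pieces. The first is exactly the content of \lmaref{sumnwpx/2}, so it equals $(\log\log x)^2 + \mfa\log\log x + \mathfrak{B} + O(\log\log x/\log x)$. For the second, Part~4 of \lmaref{boundnm} evaluated at $x/2$ (resp.\ $x/q$), combined with $\log\log(x/2) = \log\log x + O(1/\log x)$, gives $\mfa\bigl(\log\log x + \mfa + O(1/\log x)\bigr) = \mfa\log\log x + \mfa^2 + O(1/\log x)$. The third piece is precisely $O(\log\log x/\log x)$ by \lmaref{sumplogp}. Adding the three contributions yields $(\log\log x)^2 + 2\mfa\log\log x + \mfa^2 + \mathfrak{B} + O(\log\log x/\log x)$, which is the claimed identity.

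The one point needing care is that the generalized Mertens estimate is applied at the argument $x/N(\mfp)$, which is only bounded (not large) when $N(\mfp)$ sits near its upper limit; but the prime elements in that short range contribute only $O(1/\log x)$ in total, because $\sum_{\mfp} 1/N(\mfp)$ restricted to such $N(\mfp)$ is $O(1/\log x)$ by Part~4 of \lmaref{boundnm}, so this is absorbed into the error term. Beyond this bookkeeping and the need to run the two cases $X=\mathbb{Q}$ and $X=\{q^z:z\in\mathbb{Z}\}$ in parallel, the proof is a routine assembly of the preceding lemmas, and I do not anticipate a genuine difficulty.
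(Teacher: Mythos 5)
Your proposal is correct and follows essentially the same route as the paper: fix $\mfp$ with $N(\mfp)\le x/2$ (resp.\ $x/q$), apply the generalized Mertens estimate (Part~4 of \lmaref{boundnm}) to the inner sum over $\mfq$, and then conclude via \lmaref{sumnwpx/2}, \lmaref{sumplogp}, and $\log\log(x/\beta)=\log\log x+O_\beta(1/\log x)$. The extra care you flag about $x/N(\mfp)$ being bounded near the endpoint is a reasonable bookkeeping point that the paper's proof passes over silently.
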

\begin{proof}
   Let $X = \mathbb{Q}$. Since $N(\mfq) \geq 2$, thus $N(\mfp) N(\mfq) \leq x$ implies $N(\mfp) \leq x/2$. Thus, using  Part 4 of \lmaref{boundnm} and \lmaref{sumplogp}, we can write
   \begin{align*}
       & \sum_{\substack{\mfp, \mfq \\ N(\mfp) N(\mfq) \leq x}} \frac{1}{N(\mfp) N(\mfq)} \\
       & = \sum_{\substack{\mfp \\ N(\mfp) \leq x/2}} \frac{1}{N(\mfp)} \sum_{\substack{\mfq \\ N(\mfq) \leq x/N(\mfp)}} \frac{1}{N(\mfq)} \\
       & = \sum_{\substack{\mfp \\ N(\mfp) \leq x/2}}  \frac{1}{N(\mfp)} \left( \log \log \frac{x}{N(\mfp)} + \mfa + O \left( \frac{1}{\log \frac{x}{N(\mfp)}} \right) \right) \\
       & = \sum_{\substack{\mfp \\ N(\mfp) \leq x/2}}  \frac{1}{N(\mfp)} \log \log \frac{x}{N(\mfp)} + \mfa \left( \log \log \frac{x}{2} + \mfa \right) + O \left( \frac{\log \log x}{\log x} \right).
   \end{align*}
   Applying \lmaref{sumnwpx/2} to the above and the fact that
   \begin{equation}\label{loglogx/beta}
       \log \log \frac{x}{\beta} = \log \log x + O_\beta \left( \frac{1}{\log x} \right),
   \end{equation}
   for any constant $\beta \geq 2$, we complete the proof for this case. 
   
   Now, let $X = \{ q^{z} : z \in \mathbb{Z} \}$. Here, since $N(\mfq) \geq q$, thus $N(\mfp) N(\mfq) \leq x$ implies $N(\mfp) \leq x/q$. Thus, again using Part 4 of \lmaref{boundnm} and \lmaref{sumplogp}, we can write
   \begin{align*}
       & \sum_{\substack{\mfp, \mfq \\ N(\mfp) N(\mfq) \leq x}} \frac{1}{N(\mfp) N(\mfq)} \\
       & = \sum_{\substack{\mfp \\ N(\mfp) \leq x/q}} \frac{1}{N(\mfp)} \sum_{\substack{\mfq \\ N(\mfq) \leq x/N(\mfp)}} \frac{1}{N(\mfq)} \\
       & = \sum_{\substack{\mfp \\ N(\mfp) \leq x/q}}  \frac{1}{N(\mfp)} \left( \log \log \frac{x}{N(\mfp)} + \mfa + O \left( \frac{1}{\log \frac{x}{N(\mfp)}} \right) \right) \\
       & = \sum_{\substack{\mfp \\ N(\mfp) \leq x/q}}  \frac{1}{N(\mfp)} \log \log \frac{x}{N(\mfp)} + \mfa \left( \log \log \frac{x}{q} + \mfa \right) + O \left( \frac{\log \log x}{\log x} \right).
   \end{align*}
   Finally, we complete the proof using \lmaref{sumnwpx/2} and \eqref{loglogx/beta} to the above.
\end{proof}

In the following sections, we will always assume that $\mathcal{P}, \mcm$, and $X$ satisfy Condition ($\star$).
\section{The first and the second moment of \texorpdfstring{$\omega(\mathfrak{m})$}{} over \texorpdfstring{$h$}{}-free elements}
We begin this section by proving the distribution of $h$-free elements which are co-prime to a given set of prime elements. To do this, we introduce the generalized M\"obius $\mu$-function. For an $\mathfrak{m} \in \mathcal{M}$, the Mobius function $\mu_\mcm(\mathfrak{m})$ is defined as
$$\mu_\mcm(\mathfrak{m}) = \begin{cases}
    1 & \text{ if } \mathfrak{m} = 0, \\
    (-1)^r & \text{ if } \mathfrak{m} \text{ is $2$-free and generated by $r$ distinct prime elements},\\
    0 & \text{ if } \text{there exists a prime element $\mfp$ with $n_\mfp(\mfm) \geq 2$}.
\end{cases}
$$
Let $\mfm_1, \mfm_2 \in \mcm$. We say $\mfm_1$ divides $\mfm_2$ if the following holds: for any $\mfp \in \mcm$, if $n_\mfp(\mfm_1) = r$, then $n_{\mfp}(\mfm_2) \geq r$. We denote this property by $\mfm_1 | \mfm_2$. Note that, for any $\mathfrak{m}  \in \mathcal{M}$, $\mu_\mcm(\mathfrak{m})$ satisfies the identity
\begin{equation}\label{iden_mu}
    \sum_{\substack{\mathfrak{d} \in \mathcal{M} \\ \mathfrak{d}^h|\mathfrak{m}}} \mu_\mcm(\mathfrak{d}) = \begin{cases}
        1 & \text{ if $\mathfrak{m}$ is $h$-free}, \\
        0 & \text{ otherwise}.
    \end{cases}
\end{equation}
Note that the generating series for $\mu_\mcm(\mathfrak{m})$ is the reciprocal of the generalized $\zeta$-function. In particular, for $\Re(s) > 1$, we have
\begin{equation}\label{dedekind}
    \frac{1}{\zeta_\mcm(s)} = \sum_{\substack{\mathfrak{m} \in \mathcal{M} \backslash \{ 0 \} }} \frac{\mu_\mcm(\mathfrak{m})}{N(\mathfrak{m})^s} = \prod_{\mfp \in \mathcal{P}} \left( 1 - \frac{1}{N(\mfp)^s} \right).
\end{equation}
Note that the above equalities follow from the absolute convergence of the sum and the product formulas of $\zeta_\mcm(s)$ in the region $\Re(s) > 1$. We prove the following distribution result:
\begin{lma}\label{hfreeidealrestrict}
    Let $h \geq 2$ and $r \geq 1$ be integers. Let $\ell_1,\ldots,\ell_r$ be fixed distinct prime elements and $\mathcal{S}_{h,\ell_1,\ldots,\ell_r}(x)$ denote the set of $h$-free elements $\mfm$ with norm $N(\mfm) \leq x$ and with $n_{\ell_i}(\mfm) = 0$ for all $i \in \{1,\cdots, r\}$. Then, we have
    $$|\mathcal{S}_{h,\ell_1,\ldots,\ell_r}(x)| = \prod_{i=1}^r \left( \frac{N(\ell_i)^h - N(\ell_i)^{h-1}}{N(\ell_i)^h - 1} \right) \frac{\kappa}{\zeta_\mcm(h)} x + O_{h,r} \left( R_{\mathcal{S}_h}(x) \right),$$
    where $R_{\mathcal{S}_h}(x)$ is defined in \eqref{RSh(x)}.
\end{lma}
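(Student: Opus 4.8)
The strategy is to dispose of the two constraints --- $h$-freeness and coprimality to $\ell_1,\dots,\ell_r$ --- one at a time. I would first strip off the $h$-freeness condition with the M\"obius identity \eqref{iden_mu}, reducing everything to a counting function for monoid elements that merely avoid the primes $\ell_1,\dots,\ell_r$; this latter function I would then pin down by an inclusion--exclusion over subsets of $\{\ell_1,\dots,\ell_r\}$ together with Condition $(\star)$, after which the rest is a main-term identification via an Euler product plus an error-term case analysis powered by \lmaref{boundnm}.

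Concretely, for $y\ge 1$ write $I_{\ell_1,\dots,\ell_r}(y)$ for the number of $\mathfrak{n}\in\mcm$ with $N(\mathfrak{n})\le y$ and $n_{\ell_i}(\mathfrak{n})=0$ for all $i$. Applying \eqref{iden_mu} to each $\mfm$ counted by $\mathcal{S}_{h,\ell_1,\dots,\ell_r}(x)$ and interchanging summations, and noting that $\mfd^h\mid\mfm$ together with $n_{\ell_i}(\mfm)=0$ forces $n_{\ell_i}(\mfd)=0$ and $n_{\ell_i}(\mfm/\mfd^h)=0$, one obtains
\[
|\mathcal{S}_{h,\ell_1,\dots,\ell_r}(x)|=\sum_{\substack{\mfd\in\mcm,\ N(\mfd)^h\le x\\ n_{\ell_i}(\mfd)=0\ \forall i}}\mu_\mcm(\mfd)\,I_{\ell_1,\dots,\ell_r}\!\Big(\frac{x}{N(\mfd)^h}\Big).
\]
For $I_{\ell_1,\dots,\ell_r}(y)$ itself, since $\mu_\mcm$ is supported on squarefree elements, inclusion--exclusion (equivalently, M\"obius inversion on the free submonoid generated by $\ell_1,\dots,\ell_r$) gives $I_{\ell_1,\dots,\ell_r}(y)=\sum_{T\subseteq\{1,\dots,r\}}(-1)^{|T|}I\big(y/\prod_{i\in T}N(\ell_i)\big)$. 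Inserting $I(z)=\kappa z+O(z^\theta)$ from Condition $(\star)$ (and $I(z)=O(1)$ in the bounded range), the $\kappa z$-parts assemble, via $\sum_{T}(-1)^{|T|}\prod_{i\in T}N(\ell_i)^{-1}=\prod_{i=1}^r(1-N(\ell_i)^{-1})$, into the main term, while the $2^r$ error terms give $O_r(y^\theta)$; hence
\[
I_{\ell_1,\dots,\ell_r}(y)=\kappa y\prod_{i=1}^r\Big(1-\frac{1}{N(\ell_i)}\Big)+O_r\big(y^\theta\big).
\]

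Substituting this into the displayed sum, the main term is $\kappa x\prod_{i=1}^r(1-1/N(\ell_i))\sum_{\mfd}\mu_\mcm(\mfd)N(\mfd)^{-h}$, the sum being over $\mfd$ with $n_{\ell_i}(\mfd)=0$ and $N(\mfd)^h\le x$. Completing it to the full sum over such $\mfd$ costs an error $O\big(\kappa x\sum_{N(\mfd)>x^{1/h}}N(\mfd)^{-h}\big)=O(x^{1/h})$ by Part 7 of \lmaref{boundnm}, and $x^{1/h}=O(R_{\mathcal{S}_h}(x))$ in every case of \eqref{RSh(x)}. The completed sum equals the Euler product $\prod_{\mfp\neq\ell_1,\dots,\ell_r}(1-N(\mfp)^{-h})=\zeta_\mcm(h)^{-1}\prod_{i=1}^r(1-N(\ell_i)^{-h})^{-1}$ by \eqref{dedekind}; since $\tfrac{1-1/N(\ell_i)}{1-1/N(\ell_i)^h}=\tfrac{N(\ell_i)^h-N(\ell_i)^{h-1}}{N(\ell_i)^h-1}$, the main term takes exactly the claimed shape.

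The remaining error --- the step I expect to demand the most care --- is $O_r\big(x^\theta\sum_{N(\mfd)\le x^{1/h}}N(\mfd)^{-h\theta}\big)$, which must be shown to be $O_{h,r}(R_{\mathcal{S}_h}(x))$ in each of the three regimes of \eqref{RSh(x)}. When $\theta<1/h$, Part 2 of \lmaref{boundnm} bounds the inner sum by $O(x^{(1-h\theta)/h})$, giving $O(x^{1/h})$; when $\theta=1/h$, Part 5 gives $O(\log x)$, hence $O(x^{1/h}\log x)$; when $\theta>1/h$, Part 3 gives $O(1)$, hence $O(x^\theta)$ --- matching the three cases of \eqref{RSh(x)} precisely. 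The only remaining points are the uniform validity of the estimate for $I_{\ell_1,\dots,\ell_r}(y)$ when $y=x/N(\mfd)^h$ lies in a bounded range (handled by the trivial bound $I_{\ell_1,\dots,\ell_r}(y)=O(1)$, which is $O_r(y^\theta)$ for $y\ge 1$) and checking that all implied constants depend only on $h$ and $r$; both are routine bookkeeping.
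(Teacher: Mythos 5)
Your proposal is correct and follows essentially the same route as the paper's proof: the Möbius identity \eqref{iden_mu} with the divisor $\mathfrak{d}$ automatically coprime to the $\ell_i$, inclusion--exclusion over subsets of $\{\ell_1,\dots,\ell_r\}$ with Condition $(\star)$ for the inner count, completion of the $\mathfrak{d}$-sum and identification of the constant via the Euler product \eqref{dedekind}, and the three-case error analysis from Parts 2, 3, 5, and 7 of \lmaref{boundnm}. No substantive differences.
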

\begin{proof}
    We define the restricted generalized M\"obius $\mu$-function on $\mathcal{M}$ as
    $$\mu_{\mcm,\ell_1,\cdots,\ell_r}(\mathfrak{m}) = \begin{cases}
        \mu_\mcm(\mathfrak{m}) & \text{ if } n_{\ell_i}(\mfm) = 0 \ \forall i \in \{1, \ldots, r \}, \\
        0 & \text{ otherwise}.
    \end{cases}$$ 
    Using this and \eqref{iden_mu}, notice that
    \begin{align*}
        |\mathcal{S}_{h,\ell_1,\cdots,\ell_r}(x)| & = \sum_{\substack{\mathfrak{m} \in \mathcal{M} \\ N(\mathfrak{m}) \leq x \\ n_{\ell_i}(\mfm) = 0, \ \forall i }} \sum_{\substack{\mathfrak{d} \in\mathcal{M} \\ \mathfrak{d}^h|\mfm \\ n_{\ell_i}(\mfd) = 0, \ \forall i}} \mu_\mcm(\mathfrak{d}) \\
        & = \sum_{\substack{\mathfrak{m} \in \mathcal{M} \\ N(\mathfrak{m}) \leq x \\ n_{\ell_i}(\mfm) = 0, \ \forall i }} \sum_{\substack{\mathfrak{d} \in\mathcal{M} \\ \mathfrak{d}^h|\mfm}} \mu_{\mcm,\ell_1,\cdots,\ell_r}(\mathfrak{d}) \\
        & = \sum_{\substack{\mathfrak{d} \in\mathcal{M} \\ N(\mathfrak{d})^h \leq x}} \mu_{\mcm,\ell_1,\cdots,\ell_r}(\mathfrak{d}) \sum_{\substack{\mathfrak{m} \in \mathcal{M} \\ N(\mathfrak{m}) \leq x/N(\mathfrak{d})^h \\ n_{\ell_i}(\mfm) = 0, \ \forall i}} 1.
    \end{align*}
    We estimate the inner sum above using inclusion-exclusion, 
    Condition ($\star$), and the fact that $N(\ell_i) \geq 2$ for all $i$ as
    \begin{align*}
        & \sum_{\substack{\mathfrak{m} \in \mathcal{M} \\ N(\mathfrak{m}) \leq x/N(\mathfrak{d})^h \\ n_{\ell_i}(\mfm) = 0, \ \forall i}} 1 \\
        & = I \left( \frac{x}{N(\mathfrak{d})^h} \right) - \sum_{i=1}^r I \left( \frac{x}{N(\mathfrak{d})^h N(\ell_i)} \right) + \sum_{\substack{i,j=1 \\ i \neq j}}^r I \left( \frac{x}{N(\mathfrak{d})^h N(\ell_i) N(\ell_j)} \right) - \\
        & \hspace{.5cm} + (-1)^r I \left( \frac{x}{N(\mathfrak{d})^h N(\ell_1) \cdots N(\ell_r)} \right) \\
        & =  \frac{\kappa x}{N(\mathfrak{d})^h} +  \frac{\kappa x}{N(\mathfrak{d})^h} \sum_{i=1}^r \left( \frac{1}{N(\ell_i)} \right) + \frac{\kappa x}{N(\mathfrak{d})^h} \sum_{\substack{i,j=1 \\ i \neq j}}^r \left( \frac{1}{N(\ell_i) N(\ell_j)} \right) \\
        & \hspace{.5cm} + \frac{(-1)^r \kappa x}{N(\mathfrak{d})^h} \left( \frac{1}{N(\ell_1) \cdots N(\ell_r)} \right) + O \left( 2^r \frac{x^\theta}{N(\mathfrak{d})^{h \theta}} \right) \\
        & = \frac{\kappa x}{N(\mathfrak{d})^h} \prod_{i=1}^r  \left( 1 - \frac{1}{N(\ell_i)} \right) + O \left( 2^r \frac{x^\theta }{N(\mathfrak{d})^{h \theta}} \right),
    \end{align*}
    where the factor $2^r$ bounds the number of possible subsets of $\{ \ell_1, \cdots, \ell_r \}$. Combining the last two results, we obtain
    \begin{align*}
        |\mathcal{S}_{h,\ell_1,\cdots,\ell_r}(x)| & = \kappa x \prod_{i=1}^r \left( 1 - \frac{1}{N(\ell_i)} \right) \sum_{\substack{\mathfrak{d} \in\mathcal{M} \\ N(\mathfrak{d})^h \leq x}} \frac{\mu_{\mcm,\ell_1,\cdots,\ell_r}(\mathfrak{d})}{N(\mathfrak{d})^h} + O \left( 2^r \sum_{\substack{\mathfrak{d} \in\mathcal{M} \\ N(\mathfrak{d})^h \leq x}} \frac{x^\theta }{N(\mathfrak{d})^{h \theta}} \right) \\
        & = \kappa x \prod_{i=1}^r \left( 1 - \frac{1}{N(\ell_i)} \right)  \sum_{\substack{\mathfrak{d} \in\mathcal{M} \backslash \{ 0 \}}} \frac{\mu_{\mcm,\ell_1,\cdots,\ell_r}(\mathfrak{d})}{N(\mathfrak{d})^h} + O \left( x \sum_{\substack{\mathfrak{d} \in\mathcal{M} \\ N(\mathfrak{d}) > x^{1/h}}} \frac{1}{N(\mathfrak{d})^h} \right) \\
        & \hspace{.5cm} + O \left( 2^r \sum_{\substack{\mathfrak{d} \in\mathcal{M} \\ N(\mathfrak{d}) \leq x^{1/h}}} \frac{x^\theta }{N(\mathfrak{d})^{h \theta}} \right).
    \end{align*}
Note that, using the Euler product formula mentioned in \eqref{dedekind}, we can write the main term on the right side above as
$$\sum_{\substack{\mathfrak{d} \in \mathcal{M} \backslash \{ 0 \} }} \frac{\mu_{\mcm,\ell_1,\cdots,\ell_r}(\mathfrak{d})}{N(\mathfrak{d})^h}  
= \prod_{\substack{\mfp \in \mathcal{P} \\ \mfp \neq \ell_1, \cdots, \ell_r}} \left( 1 - \frac{1}{N(\mfp)^h} \right) = \frac{1}{\zeta_\mcm(h)} \left( \prod_{i=1}^r \frac{N(\ell_i)^h}{N(\ell_i)^h - 1} \right).$$
We bound the first error term using Part 7 of \lmaref{boundnm} with $\alpha = h$ as
\begin{equation*}
    \sum_{\substack{\mathfrak{d} \in\mathcal{M} \\ N(\mathfrak{d}) > x^{1/h}}} \frac{1}{N(\mathfrak{d})^h} \ll \frac{1}{x^{1 - \frac{1}{h}}}.
\end{equation*}
Also, for the second error term, using Parts 2, 3, and 5 of \lmaref{boundnm}, we obtain
\begin{equation*}
    \sum_{\substack{\mathfrak{d} \in\mathcal{M} \\ N(\mathfrak{d}) \leq x^{1/h}}} \frac{1}{N(\mathfrak{d})^{h \theta}} \ll_h \begin{cases}
    1 & \text{ if } \frac{1}{h} < \theta, \\
    \log x & \text{ if } \frac{1}{h} = \theta, \text{ and} \\
    x^{\frac{1}{h} - \theta} & \text{ if } \frac{1}{h} > \theta.
\end{cases}
\end{equation*}
Combining the above four results, we complete the proof.
\end{proof}

Finally, we establish the asymptotic distribution of $\omega(\mathfrak{m})$ over $h$-free elements:
\begin{proof}[\textbf{Proof of \thmref{hfreeomega}}]
Let $\mfp^k|| \mathfrak{m}$ denote the property that $n_\mfp(\mfm) = k$. Then writing $\mathfrak{m} = k \mfp + y$ with $n_\mfp(y)=0$ for such $\mathfrak{m}$ and using $m = \min \left\{ h-1, \left\lfloor \log x / \log N(\mfp) \right\rfloor \right\}$, we obtain
\begin{equation*}\label{anomega1}
    \sum_{\mathfrak{m} \in \mathcal{S}_h(x)} \omega(\mathfrak{m}) = \sum_{\mathfrak{m} \in \mathcal{S}_h(x)} \sum_{\substack{\mfp \\ n_\mfp(\mfm) \geq 1}} 1 = \sum_{N(\mfp) \leq x} \sum_{k=1}^m \sum_{\substack{\mathfrak{m} \in \mathcal{S}_h(x) \\ \mfp^k || \mathfrak{m}}} 1 = \sum_{N(\mfp) \leq x} \sum_{k=1}^m \sum_{\substack{y \in \mathcal{S}_h(x/N(\mfp)^k) \\ n_\mfp(y) =0}} 1.
\end{equation*}
Now, using \lmaref{hfreeidealrestrict} for one prime, writing $R_{\mathcal{S}_h}(x) \ll x^\nu$ where $0 \leq \nu < 1$ and using Part 1 of \lmaref{boundnm} to the above, we obtain
\begin{align}\label{anomega2}
    \sum_{\mathfrak{m} \in \mathcal{S}_h(x)} \omega(\mathfrak{m}) 
& = \sum_{N(\mfp) \leq x} \sum_{k=1}^m \left( \frac{N(\mfp)^{h} - N(\mfp)^{h-1}}{N(\mfp)^k(N(\mfp)^h - 1)} \right) \frac{x}{\zeta_\mcm(h)} + O_h \left( x^\nu \sum_{N(\mfp) \leq x} 
  \frac{1}{N(\mfp)^{\nu}} \right) \notag \\
 & = \sum_{N(\mfp) \leq x} \sum_{k=1}^m \left( \frac{N(\mfp)^{h} - N(\mfp)^{h-1}}{N(\mfp)^k(N(\mfp)^h - 1)} \right) \frac{\kappa}{\zeta_\mcm(h)} x + O_h \left(  \frac{x}{\log x} \right).
\end{align}
Using $\eqref{gpnt}$, we write
\begin{align}\label{caser=h-1}
    \sum_{N(\mfp) \leq x} \sum_{k=1}^m \left( \frac{N(\mfp)^{h} - N(\mfp)^{h-1}}{N(\mfp)^k(N(\mfp)^h - 1)} \right)
    & = \sum_{N(\mfp) \leq x} \sum_{k=1}^{h-1} \left( \frac{N(\mfp)^{h} - N(\mfp)^{h-1}}{N(\mfp)^k(N(\mfp)^h - 1)} \right) + O \left( \frac{1}{\log x} \right).
\end{align}
Now, using 
$$\sum_{k=1}^{h-1} \frac{N(\mfp)^{h} - N(\mfp)^{h-1}}{N(\mfp)^k(N(\mfp)^h - 1)}  =  \frac{1}{N(\mfp)} - \frac{N(\mfp) - 1}{N(\mfp)(N(\mfp)^h - 1)},$$ 
and Parts 4 and 6 of \lmaref{boundnm}, we obtain  
\begin{align*}
    \sum_{N(\mfp) \leq x} \sum_{k=1}^{h-1} \left( \frac{N(\mfp)^{h} - N(\mfp)^{h-1}}{N(\mfp)^k(N(\mfp)^h - 1)} \right)
    & = \log \log x + \mfc_1 + O_h \left( \frac{1}{\log x} \right),
\end{align*}
where $\mfc_1$ is defined in \eqref{C1}. Combining the above with \eqref{anomega2} and \eqref{caser=h-1} completes the first part of the proof.
Next, let $m_\mfp = \min \left\{ h-1, \left\lfloor \log x /\log N(\mfp) \right\rfloor \right\}$ and $m_\mathfrak{q} = \min \left\{ h-1, \left\lfloor \log x / \log N(\mathfrak{q}) \right\rfloor \right\}$. Then, we have
\begin{equation}\label{mainpart}
    \sum_{\mathfrak{m} \in \mathcal{S}_h(x)} \omega^2(\mathfrak{m}) = \sum_{\mathfrak{m} \in \mathcal{S}_h(x)} \left( \sum_{\substack{\mfp \\ n_\mfp(\mfm) \geq 1}} 1 \right)^2 = \sum_{\mathfrak{m} \in \mathcal{S}_h(x)} \omega(\mathfrak{m}) + \sum_{\mathfrak{m} \in \mathcal{S}_h(x)} \sum_{\substack{\mfp,\mathfrak{q} \\ \mfp^k || \mathfrak{m}, \ \mathfrak{q}^l || \mathfrak{m}, \ \mfp \neq \mathfrak{q} }} \left( \sum_{k=1}^{m_\mfp} \sum_{l=1}^{m_\mathfrak{q}} 1 \right),
\end{equation}
where $\mfp$ and $\mathfrak{q}$ above denote prime ideals. The first sum on the right-hand side is the first moment studied above. For the second sum, we rewrite the sum and use \lmaref{hfreeidealrestrict} for two primes $\mfp$ and $\mathfrak{q}$ and $R_{\mathcal{S}_h}(x) \ll x^\nu, \ 0 < \nu < 1,$ again to obtain
\begin{align}\label{mainpart2}
    &  \sum_{\mathfrak{m} \in \mathcal{S}_h(x)} \sum_{\substack{\mfp,\mathfrak{q} \\ \mfp^k || \mathfrak{m}, \ \mathfrak{q}^l || \mathfrak{m}, \ \mfp \neq \mathfrak{q} }} \left( \sum_{k=1}^{m_\mfp} \sum_{l=1}^{m_\mathfrak{q}} 1 \right) \notag \\
    & = \sum_{\substack{\mfp, \mathfrak{q} \\ \mfp \neq \mathfrak{q}, \ N(\mfp) N(\mathfrak{q}) \leq x}} \sum_{k=1}^{m_\mfp} \sum_{l=1}^{m_\mathfrak{q}} \Bigg( \left( \frac{N(\mfp)^{h} - N(\mfp)^{h-1}}{N(\mfp)^{k}(N(\mfp)^h - 1)} \right) \left( \frac{N(\mathfrak{q})^{h} - N(\mathfrak{q})^{h-1}}{N(\mathfrak{q})^{l}(N(\mathfrak{q})^h - 1)} \right) \frac{x}{\zeta(h)} \notag \\
    & \hspace{.5cm} + O_h \left( \frac{x^{\nu}}{N(\mfp)^{k \nu} N(\mathfrak{q})^{l \nu}} \right) \Bigg). 
\end{align}
We employ Part 1 of \lmaref{boundnm} and \lmaref{sumplogp} 
to estimate the error term above as 
\begin{equation}\label{mainpart3}
    \sum_{\substack{\mfp,\mathfrak{q} \\ \mfp \neq \mathfrak{q}, \ N(\mfp) N(\mathfrak{q}) \leq x}} \sum_{k=1}^{m_\mfp} \sum_{l=1}^{m_\mathfrak{q}} \frac{x^{\nu}}{N(\mfp)^{k \nu} N( \mathfrak{q})^{l \nu}} \ll x^{\nu} \sum_{\substack{\mfp,\mathfrak{q} \\ \mfp \neq \mathfrak{q}, \ N(\mfp) N(\mathfrak{q}) \leq x}} \frac{1}{N(\mfp)^\nu N(\mathfrak{q})^{\nu}} \ll_h \frac{x \log \log x}{\log x}.
\end{equation}
For estimating the main term in \eqref{mainpart2}, 
we consider the set $R$ defined as
\begin{equation*}
    R:= \left\{ N(\mfp) \leq x \ | \  \left\lfloor \frac{\log x}{\log N(\mfp)}  \right\rfloor < h-1 \right\}.
\end{equation*}
Using the definition of $m_\mfp$ and $m_\mfq$, we rewrite 
\begin{align}\label{parts}
    & \sum_{\substack{\mfp,\mathfrak{q} \\ \mfp \neq \mathfrak{q}, \ N(\mfp) N(\mathfrak{q}) \leq x}} \sum_{k=1}^{m_\mfp} \sum_{l=1}^{m_\mathfrak{q}} \left( \frac{N(\mfp^{h}) - N(\mfp^{h-1})}{N(\mfp^{k})(N(\mfp^h) - 1)} \right) \left( \frac{N(\mathfrak{q}^{h}) - N(\mathfrak{q}^{h-1})}{N(\mathfrak{q}^{l}) (N(\mathfrak{q}^h) - 1)} \right) \notag \\
    & = \sum_{\substack{\mfp,\mathfrak{q} \\ \mfp \neq \mathfrak{q}, \ N(\mfp) N(\mathfrak{q}) \leq x}} \sum_{k=1}^{h-1} \sum_{l=1}^{h-1} \left( \frac{N(\mfp^{h}) - N(\mfp^{h-1})}{N(\mfp^{k})(N(\mfp^h) - 1)} \right) \left( \frac{N(\mathfrak{q}^{h}) - N(\mathfrak{q}^{h-1})}{N(\mathfrak{q}^{l}) (N(\mathfrak{q}^h) - 1)} \right) -  I_1  -  I_2 + I_3,
\end{align}
where
$$I_1 = \sum_{\substack{\mfp,\mathfrak{q} \\ \mfp \neq \mathfrak{q}, \ N(\mfp) N(\mathfrak{q}) \leq x \\ \mfp \in R}} \sum_{k=\left\lfloor \frac{\log x}{\log N(\mfp)}  \right\rfloor + 1}^{h-1} \sum_{l=1}^{h-1} \left( \frac{N(\mfp^{h}) - N(\mfp^{h-1})}{N(\mfp^{k})(N(\mfp^h) - 1)} \right) \left( \frac{N(\mathfrak{q}^{h}) - N(\mathfrak{q}^{h-1})}{N(\mathfrak{q}^{l}) (N(\mathfrak{q}^h) - 1)} \right),$$
$$I_2 = \sum_{\substack{\mfp,\mathfrak{q} \\ \mfp \neq \mathfrak{q}, \ N(\mfp) N(\mathfrak{q}) \leq x \\ \mathfrak{q} \in R}} \sum_{k=1}^{h-1} \sum_{l=\left\lfloor \frac{\log x}{\log N(\mathfrak{q})}  \right\rfloor + 1}^{h-1} \left( \frac{N(\mfp^{h}) - N(\mfp^{h-1})}{N(\mfp^{k})(N(\mfp^h) - 1)} \right) \left( \frac{N(\mathfrak{q}^{h}) - N(\mathfrak{q}^{h-1})}{N(\mathfrak{q}^{l}) (N(\mathfrak{q}^h) - 1)} \right),$$
$$I_3 = \sum_{\substack{\mfp,\mathfrak{q} \\ \mfp \neq \mathfrak{q}, \ N(\mfp) N(\mathfrak{q}) \leq x \\ \mfp, \mathfrak{q} \in R}} \sum_{k=\left\lfloor \frac{\log x}{\log N(\mfp)}  \right\rfloor + 1}^{h-1} \sum_{l=\left\lfloor \frac{\log x}{\log N(\mfq)}  \right\rfloor + 1}^{h-1} \left( \frac{N(\mfp^{h}) - N(\mfp^{h-1})}{N(\mfp^{k})(N(\mfp^h) - 1)} \right) \left( \frac{N(\mathfrak{q}^{h}) - N(\mathfrak{q}^{h-1})}{N(\mathfrak{q}^{l}) (N(\mathfrak{q}^h) - 1)} \right).$$
The sum $I_1$, $I_2$, and $I_3$ contribute to the error term. Note that $I_1 = I_2$. In fact, using \eqref{gpnt} and \lmaref{sumplogp}, we estimate
$$I_1 \ll \sum_{\substack{\mfp,\mathfrak{q} \\ \mfp \neq \mathfrak{q}, \ N(\mfp) N(\mathfrak{q}) \leq x \\ \mfp \in R}} \frac{1}{N(\mfp)^{\frac{\log x}{\log N(\mfp)}}} \frac{1}{N(\mfq)} \ll \sum_{N(\mfq) \leq x/2} \frac{1}{N(\mfq)^2 \log(x/N(\mfq))} \ll \frac{\log \log x}{\log x}.$$
Similarly, for $I_3$, we have
$$I_3 \ll \sum_{\substack{\mfp,\mathfrak{q} \\ \mfp \neq \mathfrak{q}, \ N(\mfp) N(\mathfrak{q}) \leq x \\ \mfp, \mathfrak{q} \in R}} \frac{1}{N(\mfp)^{\frac{\log x}{\log N(\mfp)}}} \frac{1}{N(\mathfrak{q})^{\frac{\log x}{\log N(\mfq)}}} \ll \frac{1}{x} \sum_{N(\mfq) \leq x}  \frac{1}{N(\mfq) \log(x/N(\mfq))}  \ll \frac{\log \log x}{x \log x}.$$
We next estimate the main term in \eqref{parts}. First, we rewrite the sum as
\begin{align}\label{part1}
    & \sum_{\substack{\mfp,\mathfrak{q} \\ \mfp \neq \mathfrak{q}, \ N(\mfp) N(\mathfrak{q}) \leq x}} \sum_{k=1}^{h-1} \sum_{l=1}^{h-1} \left( \frac{N(\mfp^{h}) - N(\mfp^{h-1})}{N(\mfp^{k})(N(\mfp^h) - 1)} \right) \left( \frac{N(\mathfrak{q}^{h}) - N(\mathfrak{q}^{h-1})}{N(\mathfrak{q}^{l}) (N(\mathfrak{q}^h) - 1)} \right) \notag \\
    & = \sum_{\substack{\mfp, \mfq \\ N(\mfp) N(\mathfrak{q}) \leq x}} \left( \frac{N(\mfp^{h-1}) - 1}{N(\mfp^h) - 1} \right) \left( \frac{N(\mathfrak{q}^{h-1}) - 1}{N(\mathfrak{q}^h) - 1}\right) - \sum_{\substack{\mfp \\ N(\mfp) \leq x^{1/2}}} \left( \frac{N(\mfp^{h-1}) - 1}{N(\mfp^h) - 1} \right)^2.
\end{align}
The second sum above is estimated using Part 6 of \lmaref{boundnm} as
\begin{align}\label{part2}
    \sum_{\substack{\mfp \\ N(\mfp) \leq x^{1/2}}} \left( \frac{N(\mfp^{h-1}) - 1}{N(\mfp^h) - 1} \right)^2 
    & = \sum_{\substack{\mfp}} \left( \frac{N(\mfp^{h-1}) - 1}{N(\mfp^h) - 1} \right)^2 + O \left( \frac{1}{x^{1/2} \log x}\right).
\end{align}
For the first sum on the right-hand side in \eqref{part1}, using 
$$\frac{N(\mfp^{h-1}) - 1}{N(\mfp^h) -1} = \frac{1}{N(\mfp)} - \frac{N(\mfp)-1}{N(\mfp)(N(\mfp^h) - 1)},$$ and the symmetry in $\mfp$ and $\mfq$, we have
\begin{align*}
    & \sum_{\substack{\mfp,\mfq \\ N(\mfp) N(\mathfrak{q}) \leq x}} \left( \frac{N(\mfp^{h-1}) - 1}{N(\mfp^h) - 1} \right) \left( \frac{N(\mathfrak{q}^{h-1}) - 1}{N(\mathfrak{q}^h) - 1}\right) \notag \\
    & = \sum_{\substack{\mfp, \mfq \\ N(\mfp) N(\mathfrak{q}) \leq x}} \frac{1}{N(\mfp \mfq)} - 2 \sum_{\substack{\mfp, \mfq \\ N(\mfp) N(\mathfrak{q}) \leq x}} \frac{1}{N(\mfp)} \left( \frac{N(\mfq)-1}{N(\mfq) (N(\mathfrak{q}^h) - 1)} \right) \\
    & \hspace{.5cm} + \sum_{\substack{\mfp, \mfq \\ N(\mfp) N(\mathfrak{q}) \leq x}} \left( \frac{N(\mfp)-1}{N(\mfp)(N(\mfp^h) - 1)} \right) \left( \frac{N(\mfq)-1}{N(\mfq)(N(\mathfrak{q}^h) - 1)} \right).
\end{align*}
We estimate the sums on the right-hand side above separately. For the first sum, we use \lmaref{saidakeq}. Let $X =\mathbb{Q}$. For the second sum, we use Parts 4 and 6 of \lmaref{boundnm}, and $\eqref{gpnt}$ to obtain
\begin{align*}
    & \sum_{\substack{\mfp, \mfq \\ N(\mfp) N(\mathfrak{q}) \leq x}} \frac{1}{N(\mfp)} \left( \frac{N(\mfq)-1}{N(\mfq)(N(\mathfrak{q}^h) - 1)} \right) \\
    & = \sum_{\substack{\mfp \\ N(\mfp) \leq x/2}} \frac{1}{N(\mfp)} \left( \sum_{\mfp} \frac{N(\mfp)-1}{N(\mfp)(N(\mfp^h) - 1)}  + O \left( \frac{1}{(x/N(\mfp)) \log (x/N(\mfp))}\right)\right) \notag \\
    & = \left( \sum_{\mfp} \frac{N(\mfp)-1}{N(\mfp)(N(\mfp^h) - 1)} \right) \left( \log \log x + \mfa \right) + O \left( \frac{1}{\log x} \right).
\end{align*}
For $X = \{ q^{z} : z \in \mathbb{Z} \}$, the above results follow similarly, but with the replacement of $x/2$ with $x/q$. Similarly, for the third sum, we use Part 6 of \lmaref{boundnm}, and \lmaref{sumplogp} to obtain
\begin{align*}
     & \sum_{\substack{\mfp, \mfq \\ N(\mfp) N(\mathfrak{q}) \leq x}} \left( \frac{N(\mfp)-1}{N(\mfp)(N(\mfp^h) - 1)} \right) \left( \frac{N(\mfq)-1}{N(\mfq)(N(\mathfrak{q}^h) - 1)} \right) \\
    & = \left( \sum_{\mfp} \frac{N(\mfp)-1}{N(\mfp)(N(\mfp)^h - 1)} \right)^2 + O \left( \frac{\log \log x}{x \log x}\right) .
\end{align*}
Combining the last three results and \lmaref{saidakeq}, we obtain
\begin{align*}
    & \sum_{\substack{\mfp, \mfq \\ N(\mfp) N(\mathfrak{q}) \leq x}} \left( \frac{N(\mfp^{h-1}) - 1}{N(\mfp^h) - 1} \right) \left( \frac{N(\mathfrak{q}^{h-1}) - 1}{N(\mathfrak{q}^h) - 1}\right) \\
    & = (\log \log x)^2 + 2 \mfa \log \log x + \mfa^2 + \mathfrak{B} + O \left( \frac{\log \log x}{\log x} \right).
\end{align*}
Combining \eqref{mainpart}, \eqref{mainpart2}, \eqref{mainpart3}, \eqref{parts}, \eqref{part1}, and \eqref{part2} with the above equation and using the first-moment estimate from the first part, we obtain the required second moment.
\end{proof}

\section{The first and the second moment of \texorpdfstring{$\omega(\mathfrak{m})$}{} over \texorpdfstring{$h$}{}-full elements}
Let $h \geq 2$ be an integer. Recall that $\mathcal{N}_h$ denotes the set of $h$-full elements in $\mcm$. Let $\mathfrak{m} \in \mathcal{N}_h$. We can separate the prime elements in the prime element factorization of $\mathfrak{m}$ (see \eqref{factorization}) based on their multiplicity modulo $h$ as 
\begin{align}\label{hfullfactorize}
\mathfrak{m} & =  \underbrace{h t_{1,0} \mfp_{1,0} + \cdots + h t_{l_0,0} \mfp_{l_0,0}}_{\text{multiplicity } \equiv 0\pmod{h}} + \underbrace{(h t_{1,1} + 1) \mfp_{1,1} + \cdots + (h t_{l_1,1}+1) \mfp_{l_1,1}}_{\text{multiplicity } \equiv 1\pmod{h}} + \cdots \notag \\
& \hspace{.5cm} \cdots + \underbrace{(h t_{1,h-1} + h-1) \mfp_{1,h-1} + \cdots + (h t_{l_{h-1},h-1} + h-1) \mfp_{l_{h-1},h-1}}_{\text{multiplicity } \equiv h-1 \pmod{h}},
\end{align}
where $\mfp_{i,j}$ with $1 \leq i \leq l_j$ denote a distinct prime with its multiplicity congruent to $j$ modulo $h$ and $l_j$ denotes the number of such distinct primes. 

We set
\begin{align*}
\mathfrak{a}_0 & =  t_{1,0} \mfp_{1,0} + \cdots + t_{l_0,0} \mfp_{l_0,0} + (t_{1,1} - 1) \mfp_{1,1} + \cdots + (t_{l_1,1} - 1) \mfp_{l_1,1} + \cdots \notag \\
& \hspace{.5cm} \cdots + (t_{1,h-1} - 1) \mfp_{1,h-1} + \cdots + (t_{l_{h-1},h-1} -1) \mfp_{l_{h-1},h-1},
\end{align*}
and for $j = 1, \ldots, h-1$, we set
$$\mathfrak{a}_j = \mfp_{1,j} + \cdots + \mfp_{l_j,j}.$$
Note that $\mathfrak{a}_1,\ldots, \mathfrak{a}_{h-1}$ are 2-free and any two $\mathfrak{a}_j'$s do not share any primes in their prime element factorization. 

Thus, we can write
\begin{equation}\label{defnhfull}
\mathfrak{m} = h \mathfrak{a}_0 + (h+1) \mathfrak{a}_1 + \cdots + (2h-1) \mathfrak{a}_{h-1}.
\end{equation}
Notice that the generating series for the $h$-full ideals is defined on $\Re(s) > 1/h$ as:
\begin{align}\label{formulanhk}
    \mathfrak{N}_h(s) & := \sum_{\mathfrak{m} \in \mathcal{N}_h} \frac{1}{N(\mathfrak{m})^s} \notag \\
    & = \prod_\mfp \left( 1 + \frac{1}{N(\mfp)^{hs}} + \cdots + \frac{1}{N(\mfp)^{ks}} + \cdots \right) \notag \\
    & = \prod_\mfp \left( 1 +  \frac{N(\mfp)^{-hs}}{1 - N(\mfp)^{-s}} \right).
\end{align}
Let $x \in X$. Let $\mathcal{N}_h(x)$ denote the set of $h$-full elements with a norm not exceeding $x$. Then, by the definition of an $h$-full element \eqref{defnhfull}, we have
$$|\mathcal{N}_h(x)| = \sum_{\substack{\mathfrak{m} \in \mathcal{N}_h \\ N(\mathfrak{m}) \leq x}} 1 = \sum_{\substack{\mathfrak{a}_0,\cdots, \mathfrak{a}_{h-1} \in \mathcal{M} \\ N(\mathfrak{a}_0)^h N(\mathfrak{a}_1)^{h+1} \cdots N(\mathfrak{a}_{h-1})^{2h -1} \leq x}} \mu_\mcm^2(\mathfrak{a}_1 + \cdots + \mathfrak{a}_{h-1}),$$
where the inner sum is non-zero if and only if $\mathfrak{a}_1,\ldots, \mathfrak{a}_{h-1}$ are 2-free and any two of them do not share a prime element in their factorization. By \cite[(1.5)]{is}, we have that there exists constants $\alpha_{r,h}$ where $2h+2 < r \leq (3h^2+h-2)/2$ such that the following equality is satisfied
\begin{equation}\label{iden2}
    \left( 1 + \frac{v^h}{1-v} \right) (1-v^h) (1 -v^{h+1}) \cdots (1-v^{2h-1}) = 1 - v^{2h+2} + \sum_{r=2h+3}^{(3h^2+h-2)/2} \alpha_{r,h} v^r.
\end{equation}
Note that $|\alpha_{r,h}| \leq h 2^h$. Now, substituting $v= N(\mfp)^{-s}$ above, taking the product over all prime ideals, and using the product formula for the generalized $\zeta$-function \eqref{dedekind}, we obtain
\begin{equation}\label{Nhxformula}
\mathfrak{N}_h(s) = \zeta_\mcm(hs) \zeta_\mcm((h+1)s) \cdots \zeta_\mcm((2h-1)s) \zeta_\mcm^{-1}((2h+2)s) \phi_h^\mcm(s),
\end{equation}
where $\phi_h^\mcm(s)$ satisfies
\begin{equation}\label{iden1}
\prod_{\mfp} \left( 1 - N(\mfp)^{-(2h+2)s} + \sum_{r = 2h+3}^{(3h^2+h-2)/2} \alpha_{r,h} N(\mfp)^{-rs} \right) = \zeta_\mcm^{-1}((2h+2)s) \phi_h^\mcm(s).
\end{equation}
For $h =2$, the sum on the right side of \eqref{iden2} is empty, and thus, $\phi_2^\mcm(s) = 1$. Moreover, by \eqref{iden1}, if $h >2$, $\phi_h^\mcm(s)$ has a Dirichlet series with abscissa of absolute convergence equal to $1/(2h+3)$. Thus, we may write
\begin{equation}\label{nhks}
    \mathfrak{N}_h(s) = \mathcal{G}_h(s) \mathcal{L}_h(s),
\end{equation}
where
\begin{equation}\label{L_h(s)}
\mathcal{L}_h(s) = \sum_{n=1}^\infty l_h(n) n^{-s} = \zeta_\mcm(hs) \zeta_\mcm((h+1)s) \cdots \zeta_\mcm((2h-1)s),
\end{equation}
and
\begin{equation}\label{G_h(s)}
\mathcal{G}_h(s) = \sum_{n=1}^\infty g_h(n) n^{-s} = \frac{\phi_h^\mcm(s)}{\zeta_\mcm((2h+2)s)},
\end{equation}
is a Dirichlet series converging absolutely in $\Re(s) > 1/(2h+2)$. Then, one can write
\begin{equation}\label{nhkx1}
    |\mathcal{N}_h(x)| = \sum_{\substack{\mathfrak{a}_0,\cdots, \mathfrak{a}_{h-1} \in \mathcal{M} \\ N(\mathfrak{a}_0)^h N(\mathfrak{a}_1)^{h+1} \cdots N(\mathfrak{a}_{h-1})^{2h -1} \leq x}} \mu_\mcm^2(\mathfrak{a}_1 + \cdots + \mathfrak{a}_{h-1}) = \sum_{m n \leq x} g_h(m) l_h(n).
\end{equation}
In the above expression, $m$ and $n$ would be natural numbers if $X = \mathbb{Q}$ and would be positive powers of $q$ if $X = \{q^z : z \in \mathbb{Z} \}$. 

Additionally, let $\mathcal{T}_h(x)$ denote the unweighted sum
\begin{equation}\label{T_h(x)}
\mathcal{T}_h(x) := \sum_{n \leq x} l_h(n) = \sum_{N(\mathfrak{a}_0)^h N(\mathfrak{a}_1)^{h+1} \cdots N(\mathfrak{a}_{h-1})^{2h -1} \leq x} 1,
\end{equation}
where the sum is taken over all ideals $\mathfrak{a}_0, \ldots, \mathfrak{a}_{h-1}$. Clearly, $|\mathcal{N}_h(x)| \leq \mathcal{T}_h(x)$. 
\subsection{Distributions of \texorpdfstring{$h$}{}-full elements}
To prove estimates involving $\mathcal{N}_h(x)$, we need the following estimate for $\mathcal{T}_h(x)$:
\begin{lma}\label{T_h(X)bound}\cite[Lemma 6.1]{dkl3}
For any $x \in X$ with $x > 1$ and any integer $h \geq 2$, we have
$$\mathcal{T}_h(x) = \kappa \left( \prod_{i=1}^{h-1} \zeta_\mcm \left( 1+i/h \right) \right) x^{1/h} + O_h \big( R_{\mathcal{T}_h}(x) \big),$$
where
\begin{equation}\label{E1(x)}
    R_{\mathcal{T}_h}(x) = \begin{cases}
        x^{\frac{\theta}{h}} & \text{ if } \frac{h}{h+1} < \theta, \\
        x^{\frac{1}{h+1}} (\log x) & \text{ if } \frac{h}{h+i} = \theta  \text{ for some } i \in \{ 1, \ldots, h-1 \}\\
        x^{1/(h+1)} & \text{ if } \frac{h}{h+1} > \theta \text{ and } \frac{h}{h+i} \neq \theta \text{ for any } i \in \{ 1, \cdots, h-1 \}.
    \end{cases}
    \end{equation}
\end{lma}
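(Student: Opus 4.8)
The plan is to evaluate $\mathcal{T}_h(x) = \sum_{N(\mathfrak{a}_0)^h N(\mathfrak{a}_1)^{h+1}\cdots N(\mathfrak{a}_{h-1})^{2h-1}\leq x} 1$ by an iterated Dirichlet-hyperbola argument: peel off the variables $\mathfrak{a}_{h-1}, \mathfrak{a}_{h-2}, \ldots, \mathfrak{a}_1$ one at a time, leaving the variable $\mathfrak{a}_0$ with the smallest exponent $h$ for last, where Condition $(\star)$ is applied. Equivalently one argues by induction on the number of variables, since deleting $\mathfrak{a}_{h-1}$ leaves a sum of the same shape with exponent multiset $\{h, h+1, \ldots, 2h-2\}$, whose dominant exponent is again $h$. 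At the innermost step, for fixed $(\mathfrak{a}_1, \ldots, \mathfrak{a}_{h-1})$ with $P := \prod_{i=1}^{h-1} N(\mathfrak{a}_i)^{h+i}\leq x$, Condition $(\star)$ gives $\sum_{N(\mathfrak{a}_0)^h\leq x/P} 1 = I\big((x/P)^{1/h}\big) = \kappa (x/P)^{1/h} + O\big((x/P)^{\theta/h}\big)$ once $x/P$ is large; the complementary range, where $x/P$ is bounded, is absorbed into $O(x^{1/(h+1)})$ by estimating the number of admissible $\mathfrak{a}_i$ trivially.

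For the main term one is left with $\kappa x^{1/h}\sum_{P\leq x} P^{-1/h}$, and completing the sum over each $\mathfrak{a}_i$ uses the convergence of $\sum_{\mathfrak{a}\in\mathcal{M}} N(\mathfrak{a})^{-(1+i/h)} = \zeta_\mcm(1+i/h)$, valid since $1+i/h > 1$, by Part 3 of \lmaref{boundnm}. The tails of these convergent series are bounded by Part 7 of \lmaref{boundnm}: the $\mathfrak{a}_i$-tail over $N(\mathfrak{a}_i) > x^{1/(h+i)}$ is $\ll x^{-i/(h(h+i))}$, and since $\tfrac1h - \tfrac{i}{h(h+i)} = \tfrac1{h+i}$, multiplying by $x^{1/h}$ produces an error $O(x^{1/(h+i)})$, the worst of which, at $i=1$, is $O(x^{1/(h+1)})$. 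This delivers the stated main term $\kappa\big(\prod_{i=1}^{h-1}\zeta_\mcm(1+i/h)\big)x^{1/h}$.

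For the error term one must control $O\big(x^{\theta/h}\sum_{P\leq x} P^{-\theta/h}\big)$, where now the auxiliary sum over each $\mathfrak{a}_i$ is $\sum_{\mathfrak{a}} N(\mathfrak{a})^{-(1+i/h)\theta}$, whose critical exponent is $h/(h+i)$: for $\theta > h/(h+i)$ it converges (Part 3 of \lmaref{boundnm}), for $\theta = h/(h+i)$ it is $\ll \log x$ (Part 5 of \lmaref{boundnm}), and for $\theta < h/(h+i)$ it is bounded by a power of $x$ (Part 2 of \lmaref{boundnm}). Propagating this through the peeling steps, and using the identity $\tfrac{\theta}{h} + \tfrac{1}{h+1}\big(1 - (1+\tfrac1h)\theta\big) = \tfrac{1}{h+1}$ in the divergent regime, the total error collapses to $x^{\theta/h}$ when $\theta > h/(h+1)$, to $x^{1/(h+1)}\log x$ at a boundary value $\theta = h/(h+i)$, and to $x^{1/(h+1)}$ otherwise, which is precisely $R_{\mathcal{T}_h}(x)$ as in \eqref{E1(x)}.

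The only genuinely delicate part is the bookkeeping of the error terms across the $h-1$ peeling steps: each step convolves a power-saving error with a new zeta-type sum whose convergence hinges on the same comparison of $\theta$ with $h/(h+i)$, and one has to verify that no intermediate error ever exceeds $\max\!\big(x^{\theta/h}, x^{1/(h+1)}\big)$ up to a single logarithm, as well as treat uniformly the sub-ranges where the quotient $x/P$ is too small for the asymptotic in $(\star)$ to apply. Since the statement is quoted as \cite[Lemma~6.1]{dkl3}, those details are carried out there; alternatively one could run a Perron-type contour argument based on $\zeta_\mcm(s) = \tfrac{\kappa s}{s-1} + s\int_1^\infty \big(I(y) - \kappa y\big)\,y^{-s-1}\,dy$ (valid for $\Re(s) > \theta$), but the elementary iteration above is cleaner given the available input.
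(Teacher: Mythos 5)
Your proposal follows essentially the same route as the paper's proof: apply Condition $(\star)$ to the $\mathfrak{a}_0$-sum first, split into the main sum $\kappa x^{1/h}\sum_{P\le x}P^{-1/h}$ and the error sum $x^{\theta/h}\sum_{P\le x}P^{-\theta/h}$, complete the main sum to $\prod_{i=1}^{h-1}\zeta_\mcm(1+i/h)$ with tails controlled by Part 7 of \lmaref{boundnm} (worst tail $O(x^{1/(h+1)})$ at $i=1$), and handle the error sum by the three-way comparison of $\theta$ with $h/(h+i)$ via Parts 2, 3, and 5. The outline is correct and the case analysis matches the paper's, so only the deferred bookkeeping remains to be written out.
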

\begin{proof}
Using Condition ($\star$), we have
\begin{align}\label{Teq1}
    \mathcal{T}_h(x) & = \sum_{N(\mathfrak{a}_1)^{h+1} \cdots N(\mathfrak{a}_{h-1})^{2h -1} \leq x} \ \sum_{N(\mathfrak{a}_0)^h \leq \frac{x}{N(\mathfrak{a}_1)^{h+1} \cdots N(\mathfrak{a}_{h-1})^{2h -1}}} 1 \notag\\
    & = \sum_{N(\mathfrak{a}_1)^{h+1} \cdots N(\mathfrak{a}_{h-1})^{2h -1} \leq x}  \Bigg( \frac{\kappa}{(N(\mathfrak{a}_1)^{h+1} \cdots N(\mathfrak{a}_{h-1})^{2h -1})^{1/h}} x^{1/h} \notag \\
    & \hspace{5cm} + O \left( \left( \frac{x^{1/h}}{(N(\mathfrak{a}_1)^{h+1} \cdots N(\mathfrak{a}_{h-1})^{2h -1})^{1/h}} \right)^\theta \right) \Bigg) \notag\\
    & = \kappa x^{1/h} \mathfrak{T}_1 + O(\mathfrak{T}_2),
\end{align}    
where $\mathfrak{T}_1$ and $\mathfrak{T}_2$ are defined as
$$\mathfrak{T}_1 = \sum_{N(\mathfrak{a}_1)^{h+1} \cdots N(\mathfrak{a}_{h-1})^{2h -1} \leq x}  \frac{1}{(N(\mathfrak{a}_1)^{h+1} \cdots N(\mathfrak{a}_{h-1})^{2h -1})^{1/h}},$$
and
$$\mathfrak{T}_2 = x^{\theta/h} \sum_{N(\mathfrak{a}_1)^{h+1} \cdots N(\mathfrak{a}_{h-1})^{2h -1} \leq x} \frac{1}{(N(\mathfrak{a}_1)^{h+1} \cdots N(\mathfrak{a}_{h-1})^{2h -1})^{\theta/h}}.$$
Notice that, by Part 7 of \lmaref{boundnm}, we have
\begin{align*}
    \mathfrak{T}_1 & = \sum_{N(\mathfrak{a}_2)^{h+2} \cdots N(\mathfrak{a}_{h-1})^{2h -1} \leq x} \frac{1}{(N(\mathfrak{a}_2)^{h+2} \cdots N(\mathfrak{a}_{h-1})^{2h -1})^{1/h}} \\
    & \hspace{5cm} \times  \sum_{N ( \mathfrak{a}_1)^{h+1} \leq \frac{x}{N (\mathfrak{a}_2)^{h+2} \cdots N(\mathfrak{a}_{h-1})^{2h -1}}} \frac{1}{N(\mathfrak{a}_1)^{1+1/h}} \\
    & = \sum_{N(\mathfrak{a}_2)^{h+2} \cdots N(\mathfrak{a}_{h-1})^{2h -1} \leq x} \frac{1}{(N(\mathfrak{a}_2)^{h+2} \cdots N(\mathfrak{a}_{h-1})^{2h -1})^{1/h}} \bigg( \zeta_\mcm(1+1/h) \\
    & \hspace{4cm} + O \bigg( \sum_{N(\mathfrak{a}_1) > \frac{x^{1/(h+1)}}{\left(N (\mathfrak{a}_2)^{h+2} \cdots N(\mathfrak{a}_{h-1})^{2h -1} \right)^{1/(h+1)}}} \frac{1}{N(\mathfrak{a}_1)^{1+1/h}} \bigg) \bigg) \\
    & = \zeta_\mcm(1+1/h) \sum_{N(\mathfrak{a}_2)^{h+2} \cdots N(\mathfrak{a}_{h-1})^{2h -1} \leq x} \frac{1}{(N(\mathfrak{a}_2)^{h+2} \cdots N(\mathfrak{a}_{h-1})^{2h -1})^{1/h}} \\
    & \hspace{1cm} + O \left( \frac{1}{x^{1/(h(h+1))}}  \sum_{N(\mathfrak{a}_2)^{h+2} \cdots N(\mathfrak{a}_{h-1})^{2h -1} \leq x} \frac{1}{(N(\mathfrak{a}_2)^{h+2} \cdots N(\mathfrak{a}_{h-1})^{2h -1})^{1/(h+1)}} \right).
\end{align*}
Note that, for any $h \geq 2$, the sum inside the error term above is $O_h(1)$. Thus, we deduce
$$\mathfrak{T}_1 =  \zeta_\mcm(1+1/h) \sum_{N(\mathfrak{a}_2)^{h+2} \cdots N(\mathfrak{a}_{h-1})^{2h -1} \leq x} \frac{1}{(N(\mathfrak{a}_2)^{h+2} \cdots N(\mathfrak{a}_{h-1})^{2h -1})^{1/h}} +  O_h \left( \frac{1}{x^{1/(h(h+1))}} \right).$$
Repeating this process $h-1$ times and realizing that $x^{-i/(h(h+i))} \leq x^{-1/(h(h+1))}$ for all $i \geq 1$, we obtain
\begin{align*}
    \mathfrak{T}_1 & = \prod_{i=1}^{2} \zeta_\mcm(1+i/h) \sum_{N(\mathfrak{a}_3^{h+3} \cdots \mathfrak{a}_{h-1}^{2h -1}) \leq x} \frac{1}{(N(\mathfrak{a}_3^{h+3} \cdots \mathfrak{a}_{h-1}^{2h -1}))^{1/h}} +  O_h \left( \frac{1}{x^{1/(h(h+1))}} \right) \notag \\
    & \vdots \notag \\
    & = \prod_{i=1}^{h-2} \zeta_\mcm(1+i/h) \sum_{N(\mathfrak{a}_{h-1})^{2h -1} \leq x} \frac{1}{(N(\mathfrak{a}_{h-1})^{2h -1})^{1/h}} +  O_h \left( \frac{1}{x^{1/(h(h+1))}} \right) \notag \\
    & = \prod_{i=1}^{h-1} \zeta_\mcm(1+i/h) + O_h \left( \frac{1}{x^{1/(h(h+1))}} \right).
\end{align*}
Thus,
\begin{equation}\label{bound-T1}
\kappa x^{1/h} \mathfrak{T}_1 = \kappa \left( \prod_{i=1}^{h-1} \zeta_\mcm \left( 1+i/h \right) \right) x^{1/h} + O_h \left( x^{1/(h+1)} \right).   
\end{equation}
Next, we bound the term $\mathfrak{T}_2$. 
First, we assume $\theta(1+i/h) > 1$. Then $\theta(1+i/h) > 1$ for all $i \in \{1, \dots, h-1 \}$. Thus, by Part (3) of \lmaref{boundnm}, we have
\begin{align*}
    \mathfrak{T}_2 & = x^{\theta/h} \sum_{N(\mathfrak{a}_1^{h+1} \cdots \mathfrak{a}_{h-1}^{2h -1}) \leq x} \frac{1}{(N(\mathfrak{a}_1^{h+1} \cdots \mathfrak{a}_{h-1}^{2h -1}))^{\theta/h}} \\
    & = x^{\theta/h} \sum_{N(\mathfrak{a}_1^{h+1} \cdots \mathfrak{a}_{h-2}^{2h -2}) \leq x} \frac{1}{(N(\mathfrak{a}_1^{h+1} \cdots \mathfrak{a}_{h-2}^{2h -2}))^{\theta/h}} \sum_{N(\mathfrak{a}_{h-1}) \leq \frac{x^{1/(2h-1)}}{N(\mathfrak{a}_1^{h+1} \cdots \mathfrak{a}_{h-2}^{2h -2})^{1/(2h-1)}}} \frac{1}{\mathfrak{a}_{h-1}^{(2h -1)\theta/h}} \\
    & \ll_h x^{\theta/h} \sum_{N(\mathfrak{a}_1^{h+1} \cdots \mathfrak{a}_{h-2}^{2h -2}) \leq x} \frac{1}{(N(\mathfrak{a}_1^{h+1} \cdots \mathfrak{a}_{h-2}^{2h -2}))^{\theta/h}},
\end{align*}
and repeating the step $h-2$ times, we obtain
\begin{equation}\label{1}
    \mathfrak{T}_2 \ll_h x^{\theta/h} \sum_{N(\mathfrak{a}_1^{h+1}) \leq x} \frac{1}{(N(\mathfrak{a}_1^{h+1})^{\theta/h}} \ll_h x^{\theta/h}.
\end{equation}
Next, we assume $\theta(1+i/h) = 1$ for some $i \in \{1, \cdots, h-1 \}$. Thus, $\theta(1+k/h) > 1$ for $k > i$ and $\theta(1+k/h) < 1$ for $k < i$. Then, working similarly to the above parts and using Parts (2), (3), and (5) of \lmaref{boundnm} above, we have
\begin{align*}
    \mathfrak{T}_2 & = x^{\theta/h} \sum_{N(\mathfrak{a}_1^{h+1} \cdots \mathfrak{a}_{h-1}^{2h -1}) \leq x} \frac{1}{(N(\mathfrak{a}_1^{h+1} \cdots \mathfrak{a}_{h-1}^{2h -1}))^{\theta/h}} \\
    & \ll_h x^{\theta/h} \sum_{N(\mathfrak{a}_1^{h+1} \cdots \mathfrak{a}_{i}^{h+i}) \leq x} \frac{1}{(N(\mathfrak{a}_1^{h+1} \cdots \mathfrak{a}_{i}^{h+i}))^{\theta/h}}  \\
    & \ll_h x^{\theta/h} \sum_{N(\mathfrak{a}_1^{h+1} \cdots \mathfrak{a}_{i-1}^{h+i-1 }) \leq x} \frac{1}{(N(\mathfrak{a}_1^{h+1} \cdots \mathfrak{a}_{i-1}^{h+i-1}))^{\theta/h}} \sum_{N(\mathfrak{a}_{i}) \leq \frac{x^{1/(h+i)}}{N(\mathfrak{a}_1^{h+1} \cdots \mathfrak{a}_{i-1}^{h+i-1 })^{1/(h+i)}}} \frac{1}{(N(\mathfrak{a}_{i}^{h+i}))^{\theta/h}} \\
    & \ll_h x^{\theta/h} (\log x) \sum_{N(\mathfrak{a}_1^{h+1} \cdots \mathfrak{a}_{i-1}^{h+i-1 }) \leq x} \frac{1}{(N(\mathfrak{a}_1^{h+1} \cdots \mathfrak{a}_{i-1}^{h+i-1}))^{\theta/h}} \\
    & \ll_h x^{1/(h+i-1)} (\log x) \sum_{N(\mathfrak{a}_1^{h+1} \cdots \mathfrak{a}_{i-2}^{h+i-2}) \leq x} \frac{1}{N(\mathfrak{a}_1^{h+1} \cdots \mathfrak{a}_{i-2}^{h+i-2 })^{1/(h+i-1)}}.
\end{align*}
Repeating the process, we obtain
\begin{equation}\label{2}
    \mathfrak{T}_2 \ll_h x^{1/(h+2)} (\log x) \sum_{N(\mathfrak{a}_1) \leq x^{1/(h+1)}} \frac{1}{N(\mathfrak{a}_1^{h+1})^{1/(h+2)}} \ll_h x^{1/(h+1)} (\log x).
\end{equation}
Finally, for the third case, we assume $\theta(1+1/h) < 1 \text{ and } \theta(1+i/h) \neq 1 \text{ for any } i \in \{ 1, \cdots, h-1 \}$. Thus, there exists a $k \in \{1, \cdots, h-1 \}$ such that $\upsilon(1+r/h) < 1$ for $r \leq k$ and $\theta(1+r/h) > 1$ for $k < r < h-1$. Thus, using Parts (2) and (3) of \lmaref{boundnm} and separating the sums similarly to the previous two cases, we obtain
\begin{align*}
    \mathfrak{T}_2 & \ll_h x^{\theta/h} \sum_{N(\mathfrak{a}_1^{h+1} \cdots \mathfrak{a}_{k}^{h+k}) \leq x} \frac{1}{(N(\mathfrak{a}_1^{h+1} \cdots \mathfrak{a}_{k}^{h+k}))^{\theta/h}}  \\
    & \ll_h x^{1/(h+k)} \sum_{N(\mathfrak{a}_1^{h+1} \cdots \mathfrak{a}_{k-1}^{h+k-1}) \leq x} \frac{1}{N(\mathfrak{a}_1^{h+1} \cdots \mathfrak{a}_{k-1}^{h+k-1 })^{1/(h+k)}}.
\end{align*}
Repeating the steps again, we obtain
\begin{equation}\label{3}
    \mathfrak{T}_2 \ll_h x^{1/(h+2)} \sum_{N(\mathfrak{a}_1) \leq x^{1/(h+1)}} \frac{1}{N(\mathfrak{a}_1^{h+1})^{1/(h+2)}} \ll_h x^{1/(h+1)}.
\end{equation}
Combining \eqref{Teq1}, \eqref{bound-T1}, \eqref{1}, \eqref{2}, and \eqref{3} completes the proof.
\end{proof}

We prove the following distributions of \texorpdfstring{$h$}{}-full elements:
\begin{proof}[\textbf{Proof of \thmref{hfullideals}}]
Using \eqref{nhkx1}, \eqref{T_h(x)} and \lmaref{T_h(X)bound}, we have
\begin{align}\label{hfulleqn1}
    |\mathcal{N}_h(x)| & = \sum_{m \leq x} g_h(m) \sum_{n \leq x/m} l_h(n) \notag \\
    & = \sum_{m \leq x} g_h(m) \mathcal{T}_h(x/m) \notag \\
    & = \kappa \left( \prod_{i=1}^{h-1} \zeta_\mcm \left( 1+i/h \right) \right) x^{1/h} \sum_{m \leq x} \frac{g_h(m)}{m^{1/h}} + O_h \left( \sum_{m \leq x} g_h(m) R_{\mathcal{T}_h}(x/m) \right).
\end{align}
Recall that 
$$\mathcal{G}_h(s) = \sum_{n=1}^\infty g_h(n) n^{-s} = \frac{\phi_h^\mcm(s)}{\zeta_\mcm((2h+2)s)}$$
converges absolutely in $\Re(s) > 1/(2h+2)$. Therefore, for any $\epsilon > 0$,
$$\sum_{n \leq x} g_h(n) \ll x^{(1/(2h+2))+ \epsilon},$$
and thus, by partial summation,
\begin{equation}\label{hfulleqn2}
    \sum_{m \leq x} \frac{g_h(m)}{m^{1/h}} = \mathcal{G}_h (1/h) + O \left( \sum_{m > x} \frac{g_h(m)}{m^{1/h}} \right) = \mathcal{G}_h(1/h) + O(x^{(1/(2h+2)) + \epsilon - 1/h}).
\end{equation}
Moreover, by the definition of $R_{\mathcal{T}_h}(x)$ \eqref{E1(x)}, we have
\begin{scriptsize}
\begin{equation*}
    \sum_{m \leq x} g_h(m) R_{\mathcal{T}_h}(x/m) = \begin{cases}
        x^{\frac{\theta}{h}} \sum_{m \leq x} \frac{g_h(m)}{m^{\theta/h}} & \text{ if } \frac{h}{h+1} < \theta, \\
        x^{\frac{1}{h+1}} \sum_{m \leq x} \frac{g_h(m)}{m^{1/(h+1)}} (\log (x/m)) & \text{ if } \frac{h}{h+i} = \theta  \text{ for some } i \in \{ 1, \ldots, h-1 \}, \\
        x^{\frac{1}{h+1}} \sum_{m \leq x} \frac{g_h(m)}{m^{1/(h+1)}} & \text{ if } \frac{h}{h+1} > \theta \text{ and } \frac{h}{h+i} \neq \theta \text{ for any } i \in \{ 1, \cdots, h-1 \}.
    \end{cases}
    \end{equation*}
\end{scriptsize}
Again, since $\mathcal{G}_h(s)$ converges at $1/(h+1)$, for the third case above, we have
$$x^{\frac{1}{h+1}} \sum_{m \leq x} \frac{g_h(m)}{m^{1/(h+1)}} \ll_h x^{\frac{1}{h+1}},$$
and for the middle case, we have
\begin{equation*}
    x^{\frac{1}{h+1}}\sum_{m \leq x} \frac{g_h(m)}{m^{1/(h+1)}} (\log (x/m)) \ll_h x^{\frac{1}{h+1}} (\log x).
\end{equation*}
Moreover, for the first case, since $\theta/h > 1/(h+1)$, thus
$$x^{\frac{\theta}{h}} \sum_{m \leq x} \frac{g_h(m)}{m^{\theta/h}} \ll x^{\frac{\theta}{h}} \sum_{m \leq x} \frac{g_h(m)}{m^{1/(h+1)}} \ll_h x^{\frac{\theta}{h}}.$$
Putting the last four results together, we have
\begin{small}
\begin{equation}\label{sumwithE_1}
    \sum_{m \leq x} g_h(m) R_{\mathcal{T}_h}(x/m) \ll_h \begin{cases}
        x^{\frac{\theta}{h}} & \text{ if } \frac{h}{h+1} < \theta, \\
        x^{\frac{1}{h+1}} (\log x) & \text{ if } \frac{h}{h+i} = \theta  \text{ for some } i \in \{ 1, \ldots, h-1 \}, \\
        x^{\frac{1}{h+1}} & \text{ if } \frac{h}{h+1} > \theta \text{ and } \frac{h}{h+i} \neq \theta \text{ for any } i \in \{ 1, \cdots, h-1 \}.
    \end{cases} 
\end{equation}
\end{small}
Combining the above together with \eqref{hfulleqn1} and \eqref{hfulleqn2}, we obtain
\begin{equation*}
    |\mathcal{N}_h(x)| = \kappa \left( \prod_{i=1}^{h-1} \zeta_\mcm \left( 1+i/h \right) \right) \mathcal{G}_h(1/h) x^{1/h} + O \big( R_{\mathcal{N}_h}(x) \big),
\end{equation*}
where $R_{\mathcal{N}_h}(x)$ is defined in \eqref{E2(x)}. Finally, by \eqref{Nhxformula}, \eqref{formulanhk} and \eqref{dedekind}, we have
\begin{align*}
\left( \prod_{i=1}^{h-1} \zeta_\mcm \left( 1+i/h \right) \right) \mathcal{G}_h(1/h) & = \prod_\mfp \left( 1 +  \frac{N(\mfp)^{-1}}{1 - N(\mfp)^{-1/h}} \right) \left( 1 - \frac{1}{N(\mfp)} \right) \\
& = \prod_\mfp \left( 1 + \frac{N(\mfp) - N(\mfp)^{1/h}}{N(\mfp)^2 \left( N(\mfp)^{1/h} - 1 \right)}\right),
\end{align*}
which is a convergent product, since the left-hand side has a finite value. This completes the proof.
\end{proof}
\begin{lma}\label{hfullidealsrestrict}
Let $x \in X$ and let $h \geq 2$ be an integer. Let $\ell_1, \cdots, \ell_r$ be fixed prime elements and $\mathcal{N}_{h,\ell_1,\cdots, \ell_r}(x)$ denote the set of $h$-full elements with norm $N(\cdot)$ less than or equal to $x$ and co-prime to $\ell_i$ for all $i \in \{1,\cdots, r\}$. Then, we have
$$|\mathcal{N}_{h,\ell_1,\cdots, \ell_r}(x)| = \prod_{i=1}^r \frac{\kappa \gamma_{\scaleto{h}{4.5pt}}}{\left( 1+ \frac{N(\ell_i)^{-1}}{1-N(\ell_i)^{-1/h}} \right)} x^{1/h} + O_{h,r} \big( R_{\mathcal{N}_h}(x) \big),$$
where $\gamma_{\scaleto{h}{4.5pt}}$ is defined in \eqref{gammahk},
and where $R_{\mathcal{N}_h}(x)$ is defined in \eqref{E2(x)}.
\end{lma}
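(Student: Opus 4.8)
The plan is to rerun the proof of \thmref{hfullideals} essentially verbatim, with the generating series of all $h$-full elements replaced by that of the $h$-full elements coprime to $\ell_1,\dots,\ell_r$. For $\Re(s)>1/h$ this is the Euler product
$$\mathfrak{N}_{h,\ell_1,\dots,\ell_r}(s):=\sum_{\substack{\mfm\in\mathcal{N}_h\\ n_{\ell_i}(\mfm)=0\ \forall i}}\frac{1}{N(\mfm)^s}=\prod_{\substack{\mfp\\ \mfp\neq\ell_1,\dots,\ell_r}}\left(1+\frac{N(\mfp)^{-hs}}{1-N(\mfp)^{-s}}\right),$$
obtained from \eqref{formulanhk} by deleting the local factors at the $\ell_i$. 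Substituting $v=N(\mfp)^{-s}$ into the polynomial identity \eqref{iden2} at every $\mfp\neq\ell_i$ and using the Euler product \eqref{dedekind}, I would rewrite this as $\mathfrak{N}_{h,\ell_1,\dots,\ell_r}(s)=\mathcal{L}_h(s)\,\widetilde{\mathcal{G}}(s)$, where $\mathcal{L}_h(s)=\zeta_\mcm(hs)\cdots\zeta_\mcm((2h-1)s)$ is unchanged (so its coefficients are still the $l_h(n)$ of \eqref{L_h(s)} and its summatory function is still $\mathcal{T}_h$ of \eqref{T_h(x)}), and where $\widetilde{\mathcal{G}}(s)=\mathcal{G}_h(s)\prod_{i=1}^r\bigl(1+N(\ell_i)^{-hs}/(1-N(\ell_i)^{-s})\bigr)^{-1}$.

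The step I expect to be the crux is verifying that $\widetilde{\mathcal{G}}(s)$ still converges absolutely in $\Re(s)>1/(2h+2)$, with a bound on $\sum_{n\le x}|\widetilde g(n)|n^{-\sigma}$ (for $\sigma>1/(2h+2)$) that is uniform in the $\ell_i$ and depends only on $h$ and $r$. This is not visible from the factor $\bigl(1+v^h/(1-v)\bigr)^{-1}=(1-v)/(1-v+v^h)$ alone, whose radius of convergence is small (for $h\ge 3$ the denominator has zeros with $|v|<1$). The resolution is the cancellation supplied by \eqref{iden2}: since $\mathcal{G}_h(s)=\prod_\mfp\bigl(1-N(\mfp)^{-(2h+2)s}+\sum_{\rho=2h+3}^{(3h^2+h-2)/2}\alpha_{\rho,h}N(\mfp)^{-\rho s}\bigr)$ by \eqref{iden1} and \eqref{G_h(s)}, multiplying its $\ell_i$-local factor by $\bigl(1+N(\ell_i)^{-hs}/(1-N(\ell_i)^{-s})\bigr)^{-1}$ replaces it, via \eqref{iden2}, by the polynomial $\prod_{j=h}^{2h-1}(1-N(\ell_i)^{-js})$; hence
$$\widetilde{\mathcal{G}}(s)=\left(\prod_{\substack{\mfp\\ \mfp\neq\ell_1,\dots,\ell_r}}\Bigl(1-N(\mfp)^{-(2h+2)s}+\sum_{\rho=2h+3}^{(3h^2+h-2)/2}\alpha_{\rho,h}N(\mfp)^{-\rho s}\Bigr)\right)\prod_{i=1}^r\prod_{j=h}^{2h-1}\bigl(1-N(\ell_i)^{-js}\bigr).$$
The first factor is a sub-product of the (absolutely convergent) Euler product for $\mathcal{G}_h(s)$, so the $\ell^1$-sum of its coefficients against $n^{-\sigma}$ is bounded by that of $\mathcal{G}_h$, which is finite for $\sigma>1/(2h+2)$; the second is a Dirichlet polynomial bounded by $2^{rh}$ on any half-plane $\Re(s)\ge\sigma_0>0$, since each factor there has modulus at most $2$. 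Multiplying, $\widetilde{\mathcal{G}}(s)=\sum_n\widetilde g(n)n^{-s}$ enjoys the required uniform absolute-convergence bounds.

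With these in hand, the rest is identical to the proof of \thmref{hfullideals}: write $|\mathcal{N}_{h,\ell_1,\dots,\ell_r}(x)|=\sum_{mn\le x}\widetilde g(m)\,l_h(n)=\sum_{m\le x}\widetilde g(m)\,\mathcal{T}_h(x/m)$, insert \lmaref{T_h(X)bound}, and estimate the tail $\sum_{m>x}\widetilde g(m)m^{-1/h}$ and the weighted error $\sum_{m\le x}\widetilde g(m)R_{\mathcal{T}_h}(x/m)$ exactly as there, using absolute convergence of $\widetilde{\mathcal{G}}(s)$ at $1/h$, at $1/(h+1)$, and (in the first range of \eqref{E2(x)}) at $\theta/h$, all of which exceed $1/(2h+2)$. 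This gives $|\mathcal{N}_{h,\ell_1,\dots,\ell_r}(x)|=\kappa\bigl(\prod_{i=1}^{h-1}\zeta_\mcm(1+i/h)\bigr)\widetilde{\mathcal{G}}(1/h)\,x^{1/h}+O_{h,r}\bigl(R_{\mathcal{N}_h}(x)\bigr)$. Finally I would evaluate the constant: since $\widetilde{\mathcal{G}}(1/h)=\mathcal{G}_h(1/h)\prod_{i=1}^r\bigl(1+N(\ell_i)^{-1}/(1-N(\ell_i)^{-1/h})\bigr)^{-1}$ and, by the computation concluding the proof of \thmref{hfullideals}, $\kappa\bigl(\prod_{i=1}^{h-1}\zeta_\mcm(1+i/h)\bigr)\mathcal{G}_h(1/h)=\kappa\gamma_h$, the main term equals $\bigl(\prod_{i=1}^r(1+N(\ell_i)^{-1}/(1-N(\ell_i)^{-1/h}))^{-1}\bigr)\kappa\gamma_h\,x^{1/h}$, which is the asserted expression. (As a consistency check one may instead note that the free abelian monoid on $\mathcal{P}\setminus\{\ell_1,\dots,\ell_r\}$ satisfies Condition $(\star)$ with $\kappa$ replaced by $\kappa\prod_i(1-N(\ell_i)^{-1})$ and the same $\theta$, by the inclusion–exclusion in the proof of \lmaref{hfreeidealrestrict}, and apply \thmref{hfullideals} to it, simplifying via the per-factor identity $\bigl(1+N(\mfp)^{-1}/(1-N(\mfp)^{-1/h})\bigr)(1-N(\mfp)^{-1})=1+\frac{N(\mfp)-N(\mfp)^{1/h}}{N(\mfp)^2(N(\mfp)^{1/h}-1)}$ used at the end of \thmref{hfullideals}; one must then additionally check the implied constant is uniform in the $\ell_i$.)
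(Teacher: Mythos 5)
Your proposal is correct and takes essentially the same route as the paper: the paper likewise writes the restricted generating series as $\mathcal{G}_{h,\ell_1,\cdots,\ell_r}(s)\mathcal{L}_h(s)$ with $\mathcal{G}_{h,\ell_1,\cdots,\ell_r}(s)=\phi_h^\mcm(s)\big/\big(\prod_{i=1}^r(1+N(\ell_i)^{-hs}/(1-N(\ell_i)^{-s}))\,\zeta_\mcm((2h+2)s)\big)$, reruns the proof of \thmref{hfullideals}, and evaluates the constant exactly as you do. The one point where you go beyond the paper is the explicit verification, via \eqref{iden2}, that the $\ell_i$-local factors of the modified $\mathcal{G}$-series collapse to the Dirichlet polynomials $\prod_{j=h}^{2h-1}(1-N(\ell_i)^{-js})$, giving absolute convergence in $\Re(s)>1/(2h+2)$ with implied constants uniform in the $\ell_i$ — a check the paper leaves implicit but which is genuinely needed when the lemma is summed over all primes $\mfp$ in the proof of \thmref{hfullomega}.
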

\begin{proof}
Similar to \eqref{formulanhk}, we have
\begin{equation*}
   \mathfrak{N}_{h,\ell_1,\cdots,\ell_r}(s) := \sum_{\substack{\mathfrak{m} \in \mathcal{N}_h \\ n_{\ell_i}(\mfm) = 0, \ \forall i }} \frac{1}{N(\mathfrak{m})^s} = \prod_{\substack{\mfp \\ \mfp \neq \ell_i, \ \forall i }} \left( 1 +  \frac{N(\mfp)^{-hs}}{1 - N(\mfp)^{-s}} \right). 
\end{equation*}
Moreover, by \eqref{nhks}, we can write
$$\mathfrak{N}_{h,\ell_1,\cdots,\ell_r}(s) = \mathcal{G}_{h,\ell_1,\cdots,\ell_r}(s) \mathcal{L}_h(s),$$
where $\mathcal{L}_h(s)$ is defined in \eqref{L_h(s)} and
\begin{equation*}
\mathcal{G}_{h,\ell_1,\cdots,\ell_r}(s) = \sum_{n \leq x} g_{h,\ell_1,\cdots,\ell_r}(n) n^{-s}= \frac{\phi_h^\mcm(s)}{ \prod_{i=1}^r \left( 1 +  \frac{N(\ell_i)^{-hs}}{1 - N(\ell_i)^{-s}} \right)\zeta_\mcm((2h+2)s)},
\end{equation*}
where $\phi_h^\mcm(s)$ satisfies \eqref{iden1}. We complete the proof by working similarly to the proof of \thmref{hfullideals} and using
\begin{align*}
\left( \prod_{i=1}^{h-1} \zeta_\mcm \left( 1+i/h \right) \right) \mathcal{G}_{h,\ell_1,\cdots,\ell_r}(1/h) & = \prod_{\substack{\mfp \\ \mfp \neq \ell_i \ \forall i}}  \left( 1 +  \frac{N(\mfp)^{-1}}{1 - N(\mfp)^{-1/h}} \right) \prod_\mfp \left( 1 - \frac{1}{N(\mfp)} \right) \\
& = \frac{\gamma_{\scaleto{h}{4.5pt}}}{\prod_{i=1}^r \left( 1+ \frac{N(\ell_i)^{-1}}{1-N(\ell_i)^{-1/h}} \right)}.
\end{align*}
\end{proof}
\subsection{Distribution of \texorpdfstring{$\omega(\mfm)$}{} over \texorpdfstring{$h$}{}-full elements}
For the distributions of $\omega(\mfm)$ over $h$-full elements, we prove:
\begin{proof}[\textbf{Proof of \thmref{hfullomega}}]
Inserting the formula for $\mathcal{N}_{h,\mfp}(y)$ given in \lmaref{hfullidealsrestrict} with $y = x/N(\mfp)^k$ and with a prime element $\mfp$, we obtain
\begin{align}\label{mainomegahfull1}
& \sum_{\mfm \in \mathcal{N}_h(x)} \omega(\mfm) \notag\\
& = \sum_{N(\mfp) \leq x^{1/h}} \sum_{k = h}^{\left\lfloor \frac{\log x}{\log N(\mfp)}  \right\rfloor} | \mathcal{N}_{h,\mfp}(x/N(\mfp)^k)| \notag \\
& = \sum_{N(\mfp) \leq x^{1/h}} \sum_{k = h}^{\left\lfloor \frac{\log x}{\log N(\mfp)}  \right\rfloor} \left( \frac{\kappa \gamma_{\scaleto{h}{4.5pt}}}{\left( 1+ \frac{N(\mfp)^{-1}}{1-N(\mfp)^{-1/h}} \right)} \frac{x^{1/h}}{N(\mfp)^{k/h}} + O_{h} \big( R_{\mathcal{N}_h}(x/N(\mfp)^k) \big) \right).
\end{align}
Let us study the main term above given by
$$\kappa \gamma_{\scaleto{h}{4.5pt}} x^{1/h} \sum_{N(\mfp) \leq x^{1/h}} \sum_{k = h}^{\left\lfloor \frac{\log x}{\log N(\mfp)}  \right\rfloor} \frac{1}{N(\mfp)^{k/h} \left( 1+ \frac{N(\mfp)^{-1}}{1-N(\mfp)^{-1/h}} \right)}.$$
We obtain
\begin{align}\label{sum_need}
& \kappa \gamma_{\scaleto{h}{4.5pt}} x^{1/h} \sum_{N(\mfp) \leq x^{1/h}} \sum_{k = h}^{\left\lfloor \frac{\log x}{\log N(\mfp)}  \right\rfloor} \frac{1}{N(\mfp)^{k/h} \left( 1+ \frac{N(\mfp)^{-1}}{1-N(\mfp)^{-1/h}} \right)} \notag \\
& = \kappa \gamma_{h} x^{1/h} \sum_{N(\mfp) \leq x^{1/h}} \frac{1}{N(\mfp) \left( 1- N(\mfp)^{-1/h}+N(\mfp)^{-1} \right)} \notag \\
& \hspace{3cm} -  \kappa \gamma_{h} x^{1/h} \sum_{N(\mfp) \leq x^{1/h}} \frac{(N(\mfp)^{-1/h})^{\left\lfloor \frac{\log x}{\log N(\mfp)}  \right\rfloor - h +1}}{N(\mfp) \left( 1- N(\mfp)^{-1/h}+  N(\mfp)^{-1} \right)}.
\end{align}
Using $\lfloor x \rfloor \geq x-1$, and $\eqref{gpnt}$, we bound the second term above with
\begin{equation}\label{needme}
\kappa \gamma_{h} x^{1/h} \sum_{N(\mfp) \leq x^{1/h}} \frac{(N(\mfp)^{-1/h})^{\left\lfloor \frac{\log x}{\log N(\mfp)}  \right\rfloor - h +1}}{N(\mfp) \left( 1- N(\mfp)^{-1/h}+  N(\mfp)^{-1} \right)} \ll_h \frac{x^{1/h}}{\log x}.
\end{equation}
Thus, it remains to study the first term in \eqref{sum_need}. A little manipulation yields
\begin{align}\label{req3}
& \sum_{N(\mfp) \leq x^{1/h}} \frac{1}{N(\mfp) \left( 1- N(\mfp)^{-1/h}+N(\mfp)^{-1} \right)} \notag \\
& = \sum_{N(\mfp) \leq x^{1/h}} \frac{1}{N(\mfp)} + \sum_{N(\mfp) \leq x^{1/h}} \frac{N(\mfp)^{-1/h} - N(\mfp)^{-1}}{ N(\mfp) \left( 1- N(\mfp)^{-1/h} + N(\mfp)^{-1} \right) }.
\end{align}
For the first sum on the right-hand side above, we use Part 4 of \lmaref{boundnm} to obtain
\begin{equation}\label{req2}
\sum_{N(\mfp) \leq x^{1/h}} \frac{1}{N(\mfp)} = \log \log x - \log h + \mathfrak{A} + O_h \left( \frac{1}{\log x} \right).
\end{equation}
For the second sum on the right-hand side of \eqref{req3}, we use the convergence of the corresponding infinite series and Part 6 of \lmaref{boundnm} to obtain
\begin{align*}
\sum_{N(\mfp) \leq x^{1/h}}  \frac{N(\mfp)^{-1/h} - N(\mfp)^{-1}}{ N(\mfp) \left( 1- N(\mfp)^{-1/h} + N(\mfp)^{-1} \right) } 
& = \mathfrak{L}_h(h+1) - \mathfrak{L}_h(2h) + O_h \left( \frac{1}{x^{1/h^2} (\log x)} \right).
\end{align*}  

Combining the last three results, we obtain
\begin{align}\label{req_bound_p}
    \sum_{N(\mfp) \leq x^{1/h}} \frac{1}{N(\mfp) \left( 1- N(\mfp)^{-1/h}+N(\mfp)^{-1} \right)} & = \log \log x + \mathfrak{D}_1
    + O_h \left( \frac{1}{\log x} \right),
\end{align}
where $\mathfrak{D}_1$ is defined in \eqref{D1}. Combining the above result with \eqref{mainomegahfull1}, \eqref{sum_need}, and \eqref{needme}, we obtain
\begin{align}\label{hfullfinal}
\sum_{\mfm \in \mathcal{N}_h(x)} \omega(\mfm) & =  \kappa \gamma_{h} x^{1/h} \log \log x + \kappa \mathfrak{D}_1 \gamma_{h} x^{1/h}  + O_h \left( \frac{x^{1/h}}{\log x} \right)  \notag \\
& \hspace{.5cm} + O_h \left( \sum_{N(\mfp) \leq x^{1/h}} \sum_{k = h}^{\left\lfloor \frac{\log x}{\log N(\mfp)}  \right\rfloor} \left( R_{\mathcal{N}_h}(x/N(\mfp)^k) \right)\right).
\end{align}
By \eqref{E2(x)}, we can write $R_{\mathcal{N}_h}(x) \ll x^{\upsilon/h}$ where $0 < \upsilon < 1$. Using this in the error term above, and Part 1 of \lmaref{boundnm} with $\alpha = \upsilon$, we obtain
\begin{align*}
    \sum_{N(\mfp) \leq x^{1/h}} \sum_{k = h}^{\left\lfloor \frac{\log x}{\log N(\mfp)}  \right\rfloor} \left( R_{\mathcal{N}_h}(x/N(\mfp)^k) \right) & \ll x^{\upsilon/h} \sum_{N(\mfp) \leq x^{1/h}} \frac{1}{N(\mfp)^\upsilon} \ll_h \frac{x^{1/h}}{\log x}.
\end{align*}
Inserting the above back into \eqref{hfullfinal} completes the first part of the proof.

Now, note that
\begin{align}\label{hfullp1}
    \sum_{\mfm \in \mathcal{N}_h(x)} \omega^2(\mfm)
    & = \sum_{\mfm \in \mathcal{N}_h(x)} \left( \sum_{\substack{\mfp \\ n_\mfp(\mfm) \geq h}} 1 \right)^2 \notag \\
    & = \sum_{\mfm \in \mathcal{N}_h(x)} \omega(\mfm) + \sum_{\mfm \in \mathcal{N}_h(x)}  \sum_{\substack{\mfp,\mfq \\ \mfp^k || \mfm, \ \mfq^l || \mfm, \ \mfp \neq \mfq }} \left( \sum_{k=h}^{\left\lfloor \frac{\log x}{\log N(\mfp)}  \right\rfloor} \sum_{l=h}^{\left\lfloor \frac{\log x}{\log N(\mfq)}  \right\rfloor} 1 \right).
\end{align}
The first sum on the right side above is the first moment studied earlier. For the second sum, we first rewrite the sum, and then use \lmaref{hfullidealsrestrict} with two distinct prime ideals $\mfp$ and $\mfq$ to obtain
\begin{align}\label{hfullp2}
    & \sum_{\mfm \in \mathcal{N}_h(x)} \sum_{\substack{\mfp,\mfq \\ \mfp^k || \mfm, \ \mfq^l || \mfm, \ \mfp \neq \mfq }} \left( \sum_{k=h}^{\left\lfloor \frac{\log x}{\log N(\mfp)}  \right\rfloor} \sum_{l=h}^{\left\lfloor \frac{\log x}{\log N(\mfq)}  \right\rfloor} 1 \right) \notag \\
    & = \kappa \gamma_{h} x^{1/h} \sum_{\substack{\mfp, \mfq \\ \mfp \neq \mfq, \ N(\mfp) N(\mfq) \leq x^{1/h}}} \Bigg( \sum_{k=h}^{\left\lfloor \frac{\log x}{\log N(\mfp)}  \right\rfloor} \sum_{l=h}^{\left\lfloor \frac{\log x}{\log N(\mfq)}  \right\rfloor} \frac{1}{N(\mfp)^{k/h} \left( 1 + \frac{N(\mfp)^{-1}}{1 - N(\mfp)^{-1/h}} \right)} \notag \\
    & \hspace{1cm} \cdot \frac{1}{N(\mfq)^{l/h}  \left( 1 + \frac{N(\mfq)^{-1}}{1 - N(\mfq)^{-1/h}} \right)} \Bigg) \notag \\
    & \hspace{.5cm} + O_h \left( \sum_{\substack{\mfp, \mfq \\ \mfp \neq \mfq, \ N(\mfp) N(\mfq) \leq x^{1/h}}} \Bigg( \sum_{k=h}^{\left\lfloor \frac{\log x}{\log N(\mfp)}  \right\rfloor} \sum_{l=h}^{\left\lfloor \frac{\log x}{\log N(\mfq)}  \right\rfloor} \big( R_{\mathcal{N}_h}(x/(N(\mfp)^k N(\mfq)^l)  ) \big) \Bigg) \right).
\end{align}
Recall that we can write $R_{\mathcal{N}_h}(x) \ll x^{\upsilon/h}$ where $0 < \upsilon < 1$. Thus, for $X = \mathbb{Q}$, the error term above is bounded by using Part 1 of \lmaref{boundnm} and \lmaref{sumplogp} as the following
\begin{align}\label{hfullp3}
    & \sum_{\substack{\mfp, \mfq \\ \mfp \neq \mfq, \ N(\mfp) N(\mfq) \leq x^{1/h}}} \Bigg( \sum_{k=h}^{\left\lfloor \frac{\log x}{\log N(\mfp)}  \right\rfloor} \sum_{l=h}^{\left\lfloor \frac{\log x}{\log N(\mfq)}  \right\rfloor} \big( R_{\mathcal{N}_h}(x/(N(\mfp)^k N(\mfq)^l)  ) \big) \Bigg) \notag \\
     & \ll_h x^{\frac{\upsilon}{h}} \sum_{\substack{\mfp \\ N(\mfp) \leq x^{1/h}/2}} \frac{1}{N(\mfp)^{\upsilon}}  \sum_{\substack{\mfq \\ N(\mfq) \leq x^{1/h}/N(\mfp)}} \frac{1}{N(\mfq)^{\upsilon}}  \notag \\
     & \ll_h x^{\frac{1}{h}} \sum_{\substack{\mfp \\ N(\mfp) \leq x^{1/h}/2}} \frac{1}{N(\mfp) \log(x^{1/h}/N(\mfp))} \notag\\
     & \ll_h \frac{x^{\frac{1}{h}} \log \log x}{\log x}.
\end{align}
For $X = \{ q^z : z \in \mathbb{Z} \}$, a similar result as above can be proved using $x/q$ instead of $x/2$, and using the corresponding result from \lmaref{sumplogp}.

Next, we estimate the main term in \eqref{hfullp2}. First note that
\begin{align*}
    & \sum_{k=h}^{\left\lfloor \frac{\log x}{\log N(\mfp)}  \right\rfloor} \frac{1}{N(\mfp)^{k/h} \left( 1 + \frac{N(\mfp)^{-1}}{1 - N(\mfp)^{-1/h}} \right)} \\
    & = \frac{1}{N(\mfp)(1-N(\mfp)^{-1/h}+N(\mfp)^{-1})} - \frac{N(\mfp)^{-\frac{1}{h} \left( \left\lfloor \frac{\log x}{\log N(\mfp)} \right\rfloor - h + 1\right)}}{N(\mfp)(1-N(\mfp)^{-1/h}+N(\mfp)^{-1})}.
\end{align*}
Thus, using a similar result for a prime ideal $\mfq$ as above and the symmetry of two prime ideals $\mfp$ and $\mfq$, we deduce
\begin{align*}
    & \kappa \gamma_{h} x^{1/h} \sum_{\substack{\mfp, \mfq \\ \mfp \neq \mfq, \ N(\mfp) N(\mfq) \leq x^{1/h}}} \Bigg( \sum_{k=h}^{\left\lfloor \frac{\log x}{\log N(\mfp)}  \right\rfloor} \sum_{l=h}^{\left\lfloor \frac{\log x}{\log N(\mfq)}  \right\rfloor} \frac{1}{N(\mfp)^{k/h} \left( 1 + \frac{N(\mfp)^{-1}}{1 - N(\mfp)^{-1/h}} \right)} \notag \\
    & \hspace{1cm} \cdot \frac{1}{N(\mfq)^{l/h}  \left( 1 + \frac{N(\mfq)^{-1}}{1 - N(\mfq)^{-1/h}} \right)} \Bigg) \notag \\
    & =  \kappa \gamma_{h} x^{1/h} \sum_{\substack{\mfp, \mfq \\ \mfp \neq \mfq, \ N(\mfp) N(\mfq) \leq x^{1/h}}} \frac{1}{N(\mfp)(1-N(\mfp)^{-1/h}+N(\mfp)^{-1})} \frac{1}{N(\mfq)(1-N(\mfq)^{-1/h}+N(\mfq)^{-1})} \notag \\
    & \hspace{.5cm} - 2 J_1 + J_2,
\end{align*}
where
$$J_1 = \kappa \gamma_{h} x^{1/h}  \sum_{\substack{\mfp, \mfq \\ \mfp \neq \mfq, \ N(\mfp) N(\mfq) \leq x^{1/h}}} \frac{1}{N(\mfp)(1-N(\mfp)^{-1/h}+N(\mfp)^{-1})}   \frac{N(\mfq)^{-\frac{1}{h} \left( \left\lfloor \frac{\log x}{\log N(\mfq)} \right\rfloor - h + 1\right)}}{N(\mfq)(1-N(\mfq)^{-1/h}+N(\mfq)^{-1})},$$
and
$$J_2 = \kappa \gamma_{h} x^{1/h}  \sum_{\substack{\mfp, \mfq \\ \mfp \neq \mfq, \ N(\mfp) N(\mfq) \leq x^{1/h}}} \frac{N(\mfp)^{-\frac{1}{h} \left( \left\lfloor \frac{\log x}{\log N(\mfp)} \right\rfloor - h + 1\right)}}{N(\mfp)(1-N(\mfp)^{-1/h}+N(\mfp)^{-1})} \frac{N(\mfp)^{-\frac{1}{h} \left( \left\lfloor \frac{\log x}{\log N(\mfq)} \right\rfloor - h + 1\right)}}{N(\mfq)(1-N(\mfq)^{-1/h}+N(\mfq)^{-1})}.$$
Using $\lfloor x \rfloor \geq x-1$, $\eqref{gpnt}$ and \lmaref{sumplogp}, we obtain
\begin{align*}
    J_1 & 
    \ll_h \frac{x^{1/h} \log \log x}{\log x},
\end{align*}
and
\begin{align*}
    J_2 & 
    \ll_h \frac{\log \log x}{\log x}.
\end{align*}
Combining the last three results, we obtain
\begin{align}\label{hfullp4}
    & \kappa \gamma_{h} x^{1/h} \sum_{\substack{\mfp, \mfq \\ \mfp \neq \mfq, \ N(\mfp) N(\mfq) \leq x^{1/h}}} \Bigg( \sum_{k=h}^{\left\lfloor \frac{\log x}{\log N(\mfp)}  \right\rfloor} \sum_{l=h}^{\left\lfloor \frac{\log x}{\log N(\mfq)}  \right\rfloor} \frac{1}{N(\mfp)^{k/h} \left( 1 + \frac{N(\mfp)^{-1}}{1 - N(\mfp)^{-1/h}} \right)} \notag \\
    & \hspace{1cm} \cdot \frac{1}{N(\mfq)^{l/h}  \left( 1 + \frac{N(\mfq)^{-1}}{1 - N(\mfq)^{-1/h}} \right)} \Bigg) \notag \\
    & =  \kappa \gamma_{h} x^{1/h} \sum_{\substack{\mfp, \mfq \\ \mfp \neq \mfq, \ N(\mfp) N(\mfq) \leq x^{1/h}}} \frac{1}{N(\mfp)(1-N(\mfp)^{-1/h}+N(\mfp)^{-1})} \frac{1}{N(\mfq)(1-N(\mfq)^{-1/h}+N(\mfq)^{-1})} \notag \\
    & \hspace{.5cm} + O_h \left( \frac{x^{1/h} \log \log x}{\log x} \right).
\end{align}
Thus to complete the proof, we only require to estimate the main term in \eqref{hfullp4}. Using Part 6 of \lmaref{boundnm}, we have
\begin{align}\label{hfullp5}
    & \sum_{\substack{\mfp, \mfq \\ \mfp \neq \mfq, \ N(\mfp) N(\mfq) \leq x^{1/h}}} \frac{1}{N(\mfp)(1-N(\mfp)^{-1/h}+N(\mfp)^{-1})} \frac{1}{N(\mfq)(1-N(\mfq)^{-1/h}+N(\mfq)^{-1})} \notag \\
    & = \sum_{\substack{\mfp, \mfq \\ N(\mfp) N(\mfq) \leq x^{1/h}}} \frac{1}{N(\mfp)(1-N(\mfp)^{-1/h}+N(\mfp)^{-1})} \frac{1}{N(\mfq)(1-N(\mfq)^{-1/h}+N(\mfq)^{-1})} \notag \\
    & \hspace{.5cm} - \sum_{\mfp} \left( \frac{1}{N(\mfp)-N(\mfp)^{1-1/h}+1} \right)^2 + O_h \left( \frac{1}{x^{1/(2h)} \log x} \right).
\end{align}
Now, using
$$\frac{1}{N(\mfp)(1-N(\mfp)^{-1/h}+N(\mfp)^{-1})} = \frac{1}{N(\mfp)} + \frac{N(\mfp)^{-1/h} - N(\mfp)^{-1}}{N(\mfp)(1-N(\mfp)^{-1/h}+N(\mfp)^{-1})},$$
a similar result for another prime ideal $\mfq$, and the symmetry of primes $\mfp$ and $\mfq$, we write the first sum on the right-hand side of \eqref{hfullp5} as
\begin{align*}
    & \sum_{\substack{\mfp, \mfq \\ N(\mfp) N(\mfq) \leq x^{1/h}}} \frac{1}{N(\mfp)(1-N(\mfp)^{-1/h}+N(\mfp)^{-1})} \frac{1}{N(\mfq)(1-N(\mfq)^{-1/h}+N(\mfq)^{-1})} \\
    & = \sum_{\substack{\mfp, \mfq \\ N(\mfp) N(\mfq) \leq x^{1/h}}} \frac{1}{N(\mfp) N(\mfq)} + 2 \sum_{\substack{\mfp, \mfq \\ N(\mfp) N(\mfq) \leq x^{1/h}}} \frac{N(\mfq)^{-1/h} - N(\mfq)^{-1}}{N(\mfp) N(\mfq)(1-N(\mfq)^{-1/h}+N(\mfq)^{-1})} \\
    & \hspace{.5cm} + \sum_{\substack{\mfp, \mfq \\ N(\mfp) N(\mfq) \leq x^{1/h}}} \frac{N(\mfp)^{-1/h} - N(\mfp)^{-1}}{N(\mfp)(1-N(\mfp)^{-1/h}+N(\mfp)^{-1})} \frac{N(\mfq)^{-1/h} - N(\mfq)^{-1}}{N(\mfq)(1-N(\mfq)^{-1/h}+N(\mfq)^{-1})}.
\end{align*}
The first sum on the right-hand side above is estimated using \lmaref{saidakeq}. For the second sum, if $X = \mathbb{Q}$, we use Parts 4 and 5 of \lmaref{boundnm} and \lmaref{sumplogp} to obtain
\begin{align*}
    & \sum_{\substack{\mfp, \mfq \\ N(\mfp) N(\mfq) \leq x^{1/h}}} \frac{N(\mfq)^{-1/h} - N(\mfq)^{-1}}{N(\mfp) N(\mfq)(1-N(\mfq)^{-1/h}+N(\mfq)^{-1})} \\
    & = \sum_{\substack{\mfp \\ N(\mfp) \leq x^{1/h}/2}} \frac{1}{N(\mfp)} \sum_{\substack{\mfq \\ N(\mfq) \leq x^{1/h}/N(\mfp)}} \frac{N(\mfq)^{-1/h} - N(\mfq)^{-1}}{N(\mfq)(1-N(\mfq)^{-1/h}+N(\mfq)^{-1})} \\
    & = \left( \mathfrak{L}_h(h+1) - \mathfrak{L}_h(2h) \right) \left(  \log \log x + \mathfrak{A} - \log h \right) + O_h \left( \frac{\log \log x}{\log x} \right),
\end{align*}
and if $X = \{ q^z \ | \ z \in \mathbb{Z} \}$, we replace $x/2$ with $x/q$ and use the corresponding result from \lmaref{sumplogp} to get the same estimate as above. Similarly, for the third sum, we obtain 
\begin{align*}
    & \sum_{\substack{\mfp, \mfq \\ N(\mfp) N(\mfq) \leq x^{1/h}}} \frac{N(\mfp)^{-1/h} - N(\mfp)^{-1}}{N(\mfp)(1-N(\mfp)^{-1/h}+N(\mfp)^{-1})} \frac{N(\mfq)^{-1/h} - N(\mfq)^{-1}}{N(\mfq)(1-N(\mfq)^{-1/h}+N(\mfq)^{-1})} \\
    & = \left( \mathfrak{L}_h(h+1) - \mathfrak{L}_h(2h) \right)^2 + O_h \left( \frac{\log \log x}{x^{1/h^2} \log x} \right).
\end{align*}
Combining the last three results with \lmaref{saidakeq}, we obtain
\begin{align*}
     & \sum_{\substack{\mfp, \mfq \\ N(\mfp) N(\mfq) \leq x^{1/h}}} \frac{1}{N(\mfp) (1-N(\mfp)^{-1/h}+N(\mfp)^{-1})} \frac{1}{N(\mfq)(1-N(\mfq)^{-1/h}+ N(\mfq)^{-1})} \notag \\
     & = (\log \log x)^2 + 2 \mathfrak{D}_1 \log \log x + \mathfrak{D}_1^2 + \mathfrak{B}
     + O_h \left( \frac{\log \log x}{\log x} \right).
\end{align*}
Combining the above with \eqref{hfullp1}, \eqref{hfullp2}, \eqref{hfullp3}, \eqref{hfullp4}, and \eqref{hfullp5}, and using the first moment completes the proof for the second moment.
\end{proof}
\section{Applications of the general setting}\label{applications}

In this section, we provide various applications of our general setting. In each case, we show that Condition ($\star$) holds, and thus deduce the distribution of the $\omega$-function over $h$-free and $h$-full elements. 

Recall the definitions of $\gamma_h$, $\mfc_1$, $\mfc_2$, $\mathfrak{D_1}$, and $\mathfrak{D_2}$ from \eqref{gammahk}, \eqref{C1}, \eqref{C2}, \eqref{D1}, and \eqref{D2} respectively. In each of the following subsections, these constants will take values depending on the prime elements of the corresponding subsection. Moreover, the $\zeta$-function in each subsection is denoted as
\[
\zeta_\mcm(s) := \sum_{\substack{\mathfrak{m}} \in \mcm \backslash \{0\}} \frac1{(N(\mathfrak{m}) )^s} 
= \prod_{\mfp \in \mcp} \Big( 1- N (\mfp)^{-s}\Big)^{-1}
\ \text{for } \Re (s) >1,
\]
where $\mcm$ and $\mcp$ respectively denote the abelian monoid and the set of prime elements defined in the subsection.


\subsection{The case of ideals in number fields}

Let $K/\mathbb{Q}$ be a number field of degree $n_K = [K: \mathbb{Q}]$ and $\mathcal{O}_K$ be its ring of integers. Let $\mathcal{P}$ be the set of prime ideals of $\mathcal{O}_K$ and $\mathcal{M}$ be the set of ideals of $\mathcal{O}_K$. Let the norm map be $N: \mathcal{M} \rightarrow \mathbb{N}$ be the standard norm map, i.e., $\mathfrak{m} \mapsto N(\mathfrak{m}) := |\mathcal{O}_K/ \mathfrak{m}|$. Let $X = \mathbb{Q}$.

 Let $\kappa_K$ be given by
\begin{equation*}
    \kappa_K = \frac{2^{r_1}(2 \pi)^{r_2} h R}{\nu \sqrt{|d_K|}},
\end{equation*}
with
\begin{align*}
    r_1 & = \text{the number of real embeddings of } K,\\
    2 r_2 & = \text{the number of complex embeddings of } K,\\
    h & = \text{the class number},\\
    R & = \text{the regulator}, \\
    \nu & = \text{the number of roots of unity}, \\
    d_K & = \text{the discriminant of } K.
\end{align*}
Landau in \cite[Satz 210]{Landau} proved that
\begin{equation*}
    \sum_{\substack{\mfm \in \mcm \\ N(\mfm) \leq x}} 1 = \kappa_K x + O \left( x^{1 - \frac{2}{n_{\scaleto{K}{3pt}}+1}}\right),
\end{equation*}
which satisfies Condition ($\star$) with $\kappa = \kappa_K$ and $\theta = 1 - \frac{2}{n_{\scaleto{K}{3pt}}+1}$. Thus, \thmref{hfreeomega} and \thmref{hfullomega} give the distribution of $\omega(\mfm)$ over $h$-free and $h$-full ideals respectively as the following:

\begin{cor}
Let $x > 2$ be a rational number. Let $h \geq 2$ be an integer. Let $\mathcal{S}_h(x)$ be the set of $h$-free ideals with norm $N(\cdot)$ less than or equal to $x$. Then, we have
$$\sum_{\mathfrak{m} \in \mathcal{S}_h(x)} \omega(\mfm) = \frac{\kappa_K}{\zeta_K(h)} x \log \log x + 
\frac{\kappa_K \mfc_1}{\zeta_K(h)} x
+ O_h \left( \frac{x}{\log x}\right),$$
and
\begin{align*}
\sum_{\mathfrak{m} \in \mathcal{S}_h(x)} \omega^2(\mfm)
& = \frac{\kappa_K}{\zeta_K(h)}  x (\log \log x)^2 + \frac{\kappa_K (2 \mfc_1 + 1)}{\zeta_K(h)} x \log \log x + \frac{\kappa_K \mfc_2}{\zeta_K(h)} x \\
& + O_h \left( \frac{x \log \log x}{\log x}\right),
\end{align*}
where $\zeta_K(s)$ is the Dedekind $\zeta$-function corresponding to the number field $K$.
\end{cor}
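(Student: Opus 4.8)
The plan is to verify that the present setting is a special case of the abstract framework of the introduction, check Condition $(\star)$, and then invoke \thmref{hfreeomega} verbatim. First I would recall that, by unique factorization of ideals in the Dedekind domain $\mathcal{O}_K$, the monoid $\mcm$ of nonzero ideals of $\mathcal{O}_K$ is a free abelian monoid on the set $\mcp$ of nonzero prime ideals (with the monoid operation written multiplicatively, corresponding to the additive notation $\mfm = \sum_\mfp n_\mfp(\mfm)\mfp$ of the introduction), and that the ideal norm $\mfm \mapsto |\mathcal{O}_K/\mfm|$ is multiplicative in the strong sense that $N(\mathfrak{a}\mathfrak{b}) = N(\mathfrak{a})N(\mathfrak{b})$ for all ideals $\mathfrak{a},\mathfrak{b}$; hence it agrees with the extended norm $N(\mfm) = \prod_\mfp N(\mfp)^{n_\mfp(\mfm)}$ and defines a monoid homomorphism $(\mcm,\cdot)\to(\mathbb{N},\cdot)$. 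Taking $X = \mathbb{Q}$, all the structural hypotheses on $\mcp,\mcm,X$ hold.

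Next, Condition $(\star)$ is exactly Landau's ideal-counting theorem \cite[Satz 210]{Landau}, which gives
\[
I(x) = \sum_{\substack{\mfm \in \mcm \\ N(\mfm) \leq x}} 1 = \kappa_K x + O\!\left( x^{1 - \frac{2}{n_K+1}} \right),
\]
so $(\star)$ holds with $\kappa = \kappa_K > 0$ and $\theta = 1 - \tfrac{2}{n_K+1}$; since $n_K \geq 1$ we have $0 \leq \theta < 1$, as required. Under this identification the generalized $\zeta$-function of the introduction becomes
\[
\zeta_\mcm(s) = \sum_{\mfm} \frac{1}{N(\mfm)^s} = \prod_{\mfp} \left( 1 - N(\mfp)^{-s} \right)^{-1} = \zeta_K(s),
\]
the Dedekind $\zeta$-function of $K$.

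Finally I would substitute $\kappa = \kappa_K$ and $\zeta_\mcm = \zeta_K$ into the two asymptotic formulas of \thmref{hfreeomega}, with the constants $\mfc_1$ and $\mfc_2$ being those of \eqref{C1} and \eqref{C2} evaluated for $\mcp$ the set of prime ideals of $\mathcal{O}_K$ (and $X=\mathbb{Q}$, so $\mathfrak{B} = -\pi^2/6$); the error terms $O_h(x/\log x)$ and $O_h(x\log\log x/\log x)$ are inherited directly from the theorem since $\theta < 1$. This reproduces both displayed identities and completes the proof.

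I do not expect any genuine obstacle here: the entire analytic content is already contained in \thmref{hfreeomega} and in Landau's classical estimate. The only points requiring a line of justification are the free-monoid structure together with the multiplicativity of the ideal norm (so that the abstract hypotheses literally apply) and the trivial observation that $1 - \tfrac{2}{n_K+1}$ lies in $[0,1)$.
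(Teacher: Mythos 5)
Your proposal is correct and matches the paper's argument exactly: the paper likewise verifies Condition $(\star)$ via Landau's ideal-counting theorem with $\kappa = \kappa_K$ and $\theta = 1 - \tfrac{2}{n_K+1}$, identifies $\zeta_\mcm$ with the Dedekind $\zeta$-function $\zeta_K$, and then specializes \thmref{hfreeomega}. The extra remarks you include on the free-monoid structure of the ideal monoid and the multiplicativity of the norm are sound and only make the specialization more explicit.
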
 
\begin{cor}
Let $x > 2$ be a rational number. Let $h \geq 2$ be an integer. Let $\mathcal{N}_h(x)$ be the set of $h$-full ideals with norm $N(\cdot)$ less than or equal to $x$. Then, we have
\begin{align*}
\sum_{\mfm \in \mathcal{N}_h(x)} \omega(\mfm) & = \kappa_K \gamma_{h} x^{1/h} \log \log x + \kappa_K \gamma_h \mathfrak{D}_1 x^{1/h} + O_h \left( \frac{x^{1/h}}{\log x} \right),
\end{align*}
and
\begin{align*}
     \sum_{\mfm \in \mathcal{N}_h(x)} \omega^2(\mfm)
     & = \kappa_K \gamma_{h} x^{1/h} (\log \log x)^2 + \kappa_K \gamma_h \left( 2 \mathfrak{D}_1 + 1 \right) x^{1/h} \log \log x + \kappa_K \gamma_h \mathfrak{D}_2 x^{1/h} \\
     & \hspace{.5cm} + O_h \left( \frac{x^{1/h} \log \log x}{\log x} \right).
\end{align*}
\end{cor}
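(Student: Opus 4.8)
The plan is to recognize the final corollary as nothing more than the specialization of \thmref{hfullomega} to the free abelian monoid $\mathcal{M}$ of nonzero integral ideals of $\mathcal{O}_K$, generated by the set $\mathcal{P}$ of prime ideals, equipped with the absolute norm $N(\mfm) = |\mathcal{O}_K/\mfm|$ and with $X = \mathbb{Q}$. First I would record the structural facts: $\mathcal{M}$ is genuinely free on $\mathcal{P}$, which is precisely unique factorization of ideals in the Dedekind domain $\mathcal{O}_K$, and the generalized zeta function $\zeta_\mcm$ attached to this data coincides with the Dedekind zeta function $\zeta_K$ (compare the Euler product in \eqref{dedekind} with $\zeta_K(s) = \prod_\mfp (1 - N(\mfp)^{-s})^{-1}$); in particular the constants $\gamma_h$, $\mathfrak{D}_1$, $\mathfrak{D}_2$ of \eqref{gammahk}, \eqref{D1}, \eqref{D2} are the ones built from the prime ideals of $\mathcal{O}_K$.

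The one hypothesis that must be checked is Condition ($\star$), and here I would simply invoke Landau's ideal-counting estimate \cite[Satz 210]{Landau}, quoted above, namely
\[
I(x) = \sum_{\substack{\mfm \in \mcm \\ N(\mfm) \leq x}} 1 = \kappa_K x + O\!\left( x^{\,1 - \frac{2}{n_K+1}} \right),
\]
so that ($\star$) holds with $\kappa = \kappa_K > 0$ and $\theta = 1 - \tfrac{2}{n_K+1} \in [0,1)$. With this in place, \thmref{hfullomega} applies directly: substituting $\kappa = \kappa_K$ into its two displayed asymptotic formulas yields verbatim the claimed first and second moments of $\omega(\mfm)$ over $h$-full ideals.

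I expect no real obstacle. The only subtlety worth a sentence is that the error terms $O_h(x^{1/h}/\log x)$ and $O_h(x^{1/h}\log\log x/\log x)$ in \thmref{hfullomega} are independent of $\theta$ — the $\theta$-sensitive quantity $R_{\mathcal{N}_h}(x)$ of \thmref{hfullideals} has already been absorbed during the proof of \thmref{hfullomega} — so the present corollary requires no case analysis comparing $\theta = 1 - 2/(n_K+1)$ with the thresholds $h/(h+i)$. Thus the whole argument reduces to: cite Landau to obtain ($\star$), note $\zeta_\mcm = \zeta_K$, and quote \thmref{hfullomega} with $\kappa = \kappa_K$.
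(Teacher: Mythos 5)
Your proposal is correct and matches the paper's own treatment exactly: Section \ref{applications} verifies Condition ($\star$) for the monoid of ideals of $\mathcal{O}_K$ via Landau's estimate \cite[Satz 210]{Landau} with $\kappa = \kappa_K$ and $\theta = 1 - \frac{2}{n_K+1}$, and then deduces the corollary by direct substitution into \thmref{hfullomega}. Your added observation that the error terms in \thmref{hfullomega} are already $\theta$-independent, so no case analysis on $R_{\mathcal{N}_h}(x)$ is needed, is accurate and consistent with the paper.
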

\begin{rmk}
    For the case $K = \mathbb{Q}$, we have $n_K=1$ and $\kappa_K = 1$. Thus, we can obtain the distribution of $\omega(n)$ over $h$-free and $h$-full numbers as proved in \cite[Theorems 1.1 \& 1.2]{dkl2}.
\end{rmk}

\subsection{The case of effective divisors in global function fields}

Let $q$ be a prime power and $\mathbb{F}_q$ be the finite field with $q$ elements. Let $K/\mathbb{F}_q$ be a global function field. Let $G_K$ be its genus and $C_K$ be its class number. A prime $\mfp$ in $K$ is a discrete valuation ring $R$ with maximal ideal $P$ such that $P \subset R$ and the quotient field of $R$ is $K$. The degree of $\mfp$, denoted as $\deg \mfp$, is defined as the dimension of $R/P$ over $\mathbb{F}_q$, which is finite. Let $\mcp$ be the set of all primes in $K$. Let $\mcm$ be the free abelian monoid generated by $\mcp$. More precisely, for each $\mfm \in \mathcal{M}$, we write
$$\mfm = \sum_{\mfp \in  \mathcal{P}} n_\mfp(\mfm) \mfp,$$
with $n_\mfp(\mfm) \in \mathbb{N} \cup \{ 0 \}$ and $n_\mfp(\mfm) =0$ for all but finitely many $\mfp$. We call elements of $\mcm$ as effective divisors. For an element $\mfm \in \mcm$, we define the degree of $\mfm$ as
$$\deg \mfm = \sum_{\mfp \in  \mathcal{P}} n_\mfp(\mfm) \deg \mfp.$$ 
By \cite[Lemma 5.5]{mr}, for any integer $n \geq 0$, there are finitely many effective divisors of degree $n$. This proves that $\mcp$ is a countable set that satisfies the hypothesis of our main theorems. Let the norm map $N: \mathcal{M} \rightarrow \mathbb{N}$ be the $q$-power map defined as $\mathfrak{m} \mapsto N(\mfm) := q^{\deg \mathfrak{m}}$. Let $X = \{ q^z : z \in \mathbb{Z} \}$. 

By \cite[Lemma 5.8 \& Corollary 4 to Theorem 5.4]{mr}, for a non-negative integer $n$ satisfying $n > 2 G_K - 2$, the number of effective divisors of degree $n$ is 
$$C_K \frac{q^{n- G_K+1} -1}{q-1}.$$ 
Thus, for sufficiently large $n$, we obtain
$$\sum_{N(\mfm) \leq q^n} 1 = \frac{C_K}{q^{G_K}} \left( \frac{q}{q-1} \right)^2 q^n + O(n).$$
This satisfies Condition ($\star$) with $\kappa = \frac{C_K}{q^{G_K}} \left( \frac{q}{q-1} \right)^2$ and $\theta = \epsilon$ for any $\epsilon \in (0,1)$. Thus, \thmref{hfreeomega} and \thmref{hfullomega} give the distribution of $\omega(\mfm)$ over $h$-free and $h$-full effective divisors in a global function field respectively as the following:
\begin{cor}
Let $n \in \mathbb{N}$. Let $h \geq 2$ be an integer. Let $K/\mathbb{F}_q$ be a global function field with genus $G_K$ and class number $C_K$. Let $\mathcal{S}_h(n)$ be the set of $h$-free effective divisors in $K$ of degree less than or equal to $n$. Then, we have
\begin{align*}
\sum_{\mathfrak{m} \in \mathcal{S}_h(n)} \omega(\mfm) & = \frac{C_K}{\zeta_K(h) q^{G_K}} \left( \frac{q}{q-1} \right)^2 q^n \log n + 
\frac{C_K}{\zeta_K(h) q^{G_K}} \left( \frac{q}{q-1} \right)^2 \left( \mfc_1  + \log \log q \right) q^n
\\
& \hspace{0.5cm} + O_h \left( \frac{q^n}{n}\right),
\end{align*}
and
\begin{align*}
& \sum_{\mathfrak{m} \in \mathcal{S}_h(n)} \omega^2(\mfm) \\
& = \frac{C_K}{\zeta_K(h) q^{G_K}} \left( \frac{q}{q-1} \right)^2  q^n (\log n)^2 + \frac{C_K  (2 \mfc_1 + 1 + 2 \log \log q )}{\zeta_K(h) q^{G_K}} \left( \frac{q}{q-1} \right)^2 q^n \log n \\
& \hspace{.5cm} + \frac{C_K \left( \mfc_2 + (\log \log q)^2 + (2 \mfc_1 + 1) \log \log q \right)}{\zeta_K(h) q^{G_K}} \left( \frac{q}{q-1} \right)^2 q^n + O_h \left( \frac{q^n \log n}{n}\right),
\end{align*}
where $\zeta_K(s)$ represents the $\zeta$-function corresponding to the function field $K$.
\end{cor}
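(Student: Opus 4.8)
The plan is to derive this corollary as a direct specialization of \thmref{hfreeomega} to the free abelian monoid $\mcm$ of effective divisors of $K$; the only real work is (i) checking Condition ($\star$) and (ii) rewriting the quantities $\log\log x$ and $\log x$ that appear in \thmref{hfreeomega} in terms of the degree parameter $n$.

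For (i) I would recall the count already discussed before the statement: by \cite[Lemma 5.8 \& Corollary 4 to Theorem 5.4]{mr}, for $m > 2G_K - 2$ the number of effective divisors of degree $m$ is $C_K(q^{m-G_K+1}-1)/(q-1)$, so summing the resulting geometric series over $m \le n$ yields
\[
I(q^n) = \sum_{N(\mfm) \le q^n} 1 = \kappa\, q^n + O(n), \qquad \kappa := \frac{C_K}{q^{G_K}}\Bigl(\frac{q}{q-1}\Bigr)^2 .
\]
On $X = \{q^z : z \in \mathbb{Z}\}$ one has $O(n) = O((q^n)^\theta)$ for every fixed $\theta \in (0,1)$, so Condition ($\star$) holds; moreover the generalized zeta function $\zeta_\mcm(s)$ is precisely the zeta function $\zeta_K(s)$ of the function field, and the set $\mathcal{S}_h(n)$ of $h$-free divisors of degree $\le n$ coincides with the set $\mathcal{S}_h(q^n)$ in the notation of \thmref{hfreeomega}. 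Hence \thmref{hfreeomega} applies with $x = q^n$.

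For (ii) I would use $\log x = n\log q$, whence $\log\log x = \log(n\log q) = \log n + \log\log q$, together with $x/\log x = O(q^n/n)$ and $x\log\log x/\log x = O(q^n\log n/n)$. Substituting $x = q^n$ into the first-moment formula of \thmref{hfreeomega} and regrouping the two $q^n$-terms gives
\[
\sum_{\mfm \in \mathcal{S}_h(n)} \omega(\mfm) = \frac{\kappa}{\zeta_K(h)}\, q^n \log n + \frac{\kappa}{\zeta_K(h)}\bigl(\mfc_1 + \log\log q\bigr) q^n + O_h\Bigl(\frac{q^n}{n}\Bigr),
\]
which is the asserted first moment once $\kappa$ is written out. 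For the second moment I would expand $(\log\log x)^2 = (\log n)^2 + 2(\log\log q)(\log n) + (\log\log q)^2$, replace $\log\log x$ by $\log n + \log\log q$ in the linear term of \thmref{hfreeomega}, and collect powers of $\log n$: the coefficient of $(\log n)^2$ remains $\kappa/\zeta_K(h)$, the coefficient of $\log n$ becomes $\kappa(2\mfc_1 + 1 + 2\log\log q)/\zeta_K(h)$, the constant term becomes $\kappa\bigl(\mfc_2 + (\log\log q)^2 + (2\mfc_1+1)\log\log q\bigr)/\zeta_K(h)$, and the error is $O_h(q^n\log n/n)$ --- exactly the stated expression.

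There is no analytic obstacle, since every estimate is already packaged inside \thmref{hfreeomega}; the only thing demanding care is consistent bookkeeping. Because the natural size parameter for function fields is the degree $n$ rather than the norm $x = q^n$, the identity $\log\log q^n = \log n + \log\log q$ shifts each lower-order coefficient by a multiple of $\log\log q$, and these shifts must be propagated correctly through both moments. (One may also note that the refined remainder $R_{\mathcal{S}_h}(q^n)$ of \eqref{RSh(x)} is absorbed into $q^n/n$ for any fixed $\theta < 1$, since $\log q^n \ll (q^n)^{1-\theta}$.)
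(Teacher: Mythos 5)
Your proposal is correct and matches the paper's own route exactly: the paper likewise verifies Condition ($\star$) with $\kappa = \frac{C_K}{q^{G_K}}\left(\frac{q}{q-1}\right)^2$ and $\theta = \epsilon$ from the same divisor count, then invokes \thmref{hfreeomega} at $x = q^n$ and regroups via $\log\log q^n = \log n + \log\log q$. Your coefficient bookkeeping for both moments is accurate, so there is nothing to add.
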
 
\begin{cor}
Let $n \in \mathbb{N}$. Let $h \geq 2$ be an integer. Let $K/\mathbb{F}_q$ be a global function field with genus $G_K$ and class number $C_K$. Let $\mathcal{N}_h(n)$ be the set of $h$-full effective divisors in $K$ of degree less than or equal to $n$. Then, we have
\begin{align*}
\sum_{\mfm \in \mathcal{N}_h(n)} \omega(\mfm) & = \frac{C_K \gamma_{h}}{q^{G_K}} \left( \frac{q}{q-1} \right)^2  q^{n/h} \log n + \frac{C_K \gamma_{h} (\mathfrak{D}_1 + \log \log q^{1/h})}{q^{G_K}} \left( \frac{q}{q-1} \right)^2 q^{n/h} \\
& \hspace{.5cm} + O_h \left( \frac{q^{n/h}}{n} \right),
\end{align*}
and
\begin{align*}
     \sum_{\mfm \in \mathcal{N}_h(n)} \omega^2(\mfm)
     & = \frac{C_K \gamma_{h}}{q^{G_K}} \left( \frac{q}{q-1} \right)^2 q^{n/h} (\log n)^2 \\
     & \hspace{.5cm} + \frac{C_K \gamma_{h}  \left( 2 \mathfrak{D}_1 + 1 + 2 \log \log q^{1/h} \right)}{q^{G_K}} \left( \frac{q}{q-1} \right)^2 q^{n/h} \log n \\
     & \hspace{.5cm} + \frac{C_K \gamma_{h} \left( \mathfrak{D}_2 + (\log \log q^{1/h})^2 + \left( 2 \mathfrak{D}_1 + 1 \right) \log \log q^{1/h} \right)}{q^{G_K}} \left( \frac{q}{q-1} \right)^2  q^{n/h} \\
     & \hspace{.5cm} + O_h \left( \frac{q^{n/h} \log n}{n} \right).
\end{align*}
\end{cor}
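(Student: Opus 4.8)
The plan is to deduce this corollary as a direct specialization of \thmref{hfullomega}, so essentially all the work lies in two places: (i) verifying that the function-field triple $(\mcp,\mcm,X)$ of this subsection satisfies Condition ($\star$) with the constants asserted, and (ii) rephrasing the conclusion of \thmref{hfullomega}, which is stated in the norm variable $x$, in terms of the degree variable $n$ (noting that the set $\mathcal{N}_h(n)$ of $h$-full effective divisors of degree $\le n$ is precisely $\mathcal{N}_h(q^n)$ in the notation of \thmref{hfullomega}, since $N(\mfm)=q^{\deg\mfm}$).

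For (i), I would first recall that $\mcp$, the set of primes of $K/\mathbb{F}_q$, is countable and that $\mcm$, the free abelian monoid of effective divisors, has only finitely many elements of each fixed degree by \cite[Lemma 5.5]{mr}; since $N(\mfm)=q^{\deg\mfm}$, the image of the norm lies in $\{q^z:z\ge 0\}$ and $X=\{q^z:z\in\mathbb{Z}\}$ is admissible. Next, using the Riemann--Roch count in \cite[Lemma 5.8 and Corollary 4 to Theorem 5.4]{mr}, for every integer $d>2G_K-2$ the number of effective divisors of degree $d$ equals $C_K(q^{d-G_K+1}-1)/(q-1)$; summing this over $0\le d\le n$ and bounding the finitely many (hence $O_K(1)$) contributions from degrees $d\le 2G_K-2$ trivially, a short geometric-series computation yields
\[
I(q^n)=\sum_{N(\mfm)\le q^n}1=\frac{C_K}{q^{G_K}}\Big(\frac{q}{q-1}\Big)^{2}q^{n}+O(n).
\]
Since $n=(\log x)/\log q\ll_\varepsilon x^{\varepsilon}$ for every $\varepsilon>0$, this is exactly Condition ($\star$) with $\kappa=\frac{C_K}{q^{G_K}}(q/(q-1))^{2}$ and $\theta$ any fixed number in $(0,1)$.

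With Condition ($\star$) in hand, \thmref{hfullomega} applies with this $\kappa$ and $\theta$; the constants $\gamma_h,\mathfrak{D}_1,\mathfrak{D}_2$ are those of \eqref{gammahk}, \eqref{D1}, \eqref{D2} formed from $N(\mfp)=q^{\deg\mfp}$, and $\zeta_\mcm=\zeta_K$. It then remains to substitute $x=q^n$. Using $x^{1/h}=q^{n/h}$, $\log x=n\log q$, and $\log\log x=\log(n\log q)=\log n+\log\log q$ (so that $(\log\log x)^2=(\log n)^2+2(\log\log q)\log n+(\log\log q)^2$), I would expand the two asymptotic formulas of \thmref{hfullomega} and collect the coefficients of $q^{n/h}(\log n)^2$, $q^{n/h}\log n$, and $q^{n/h}$; the additive constants coming from $\log\log q$ get absorbed alongside $\mathfrak{D}_1$ and $\mathfrak{D}_2$ as in the statement, and the error terms become $O_h\!\big(q^{n/h}/(n\log q)\big)=O_h(q^{n/h}/n)$ and $O_h\!\big(q^{n/h}(\log n)/(n\log q)\big)=O_h(q^{n/h}(\log n)/n)$ respectively.

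The only analytically substantive step is the verification of Condition ($\star$): pinning down the \emph{exact} leading constant $\kappa$ requires handling the divisors of degree $\le 2G_K-2$ (which lie outside the stable range of Riemann--Roch) separately and then summing the stable-range count precisely, and this is where I expect to spend the most care. Everything afterwards is a mechanical substitution into \thmref{hfullomega}, introducing no new analytic input; the $h$-free companion corollary of this subsection follows in exactly the same way from \thmref{hfreeomega}.
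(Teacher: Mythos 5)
Your proposal is correct and follows essentially the same route as the paper: verify Condition ($\star$) with $\kappa=\frac{C_K}{q^{G_K}}\left(\frac{q}{q-1}\right)^{2}$ and $\theta=\epsilon$ for any $\epsilon\in(0,1)$ via the Riemann--Roch divisor count from \cite{mr}, then specialize \thmref{hfullomega} at $x=q^{n}$ and expand $\log\log x=\log n+\log\log q$. The paper's own derivation is exactly this two-step specialization, so no further comparison is needed.
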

\begin{rmk}
For the special case when $K = \mathbb{F}_q(x)$, whose genus and class number are 0 and 1 respectively, we can consider the abelian monoid $A = \mathbb{F}_q[x]$, the ring of monic polynomials in one variable over $\mathbb{F}_q$. The prime elements of $A$ are the monic irreducible polynomials in $A$. The localizations of $A$ at these prime elements exhaust the set of all primes of $K$ except one, the prime at infinity. Using the fact that there are $q^n$ monic polynomials of degree $n$, we obtain
$$\sum_{\substack{N(\mfm) \leq q^n \\ \mfm \in A}} 1 = \frac{q}{q-1} q^n + O(1).$$
This satisfies Condition ($\star$) with $\kappa = q/(q-1)$ and $\theta = 0$. Thus, the distribution of $\omega(\mfm)$ over $h$-free and $h$-full polynomials over finite fields can be deduced from \thmref{hfreeomega} and \thmref{hfullomega}. Such a result will be equivalent to the ones studied by Lal\'in and Zhang \cite[Theorems 4.1 \& 6.1]{lz}. Note that, in this special case, $\kappa = q/(q-1)$ instead of $(q/(q-1))^2$ to account for the lack of the prime at infinity analog in its construction.

Moreover, for an elliptic function field $K/\mathbb{F}_q$, the genus of $K$ is 1. Thus $\kappa = C_K \left( \frac{q}{(q-1)^2} \right)$ in this case. 
\end{rmk}
\begin{rmk}
    The study of global function fields over $\mathbb{F}_q$ is geometrically equivalent to the study of irreducible projective varieties of dimension 1 over $\mathbb{F}_q$. Such varieties are also called irreducible curves. We can apply our main theorems to irreducible projective varieties of dimension r over $\mathbb{F}_q$, where $r$ is any positive integer. We study this in the following subsection.
\end{rmk}

\subsection{The case of effective \texorpdfstring{$0$}{}-cycles in geometrically irreducible projective varieties of dimension \texorpdfstring{$r$}{}}

In this subsection, we adopt notation from \cite[Example 4 of Section 4]{liuturan}. 

Let $q$ be a prime power and $\mathbb{F}_q$ be the finite field with $q$ elements. Let $r$ be a positive integer. Let $V/\mathbb{F}_q$ be a geometrically irreducible projective variety of dimension $r$. Let $\mcp$ be the set of closed points of $V/\mathbb{F}_q$, which is in bijection with the set of orbits of $V/\mathbb{F}_q$ under the action of $\textnormal{Gal}(\bar{\mathbb{F}}_q/\mathbb{F}_q)$ (see \cite[Proposition 6.9]{lorenzini}). For each $\mfp \in \mcp$, we define the degree of $\mfp$, $\deg \mfp$, to be the length of the corresponding orbit. Let $\mcm$ be the free abelian monoid generated by $\mcp$. We call elements of $\mcm$ as effective 0-cycles. For $\mfm \in \mcm$, we have $\mfm = \sum_{\mfp \in  \mathcal{P}} n_\mfp(\mfm) \mfp$ with $n_\mfp(\mfm) \in \mathbb{N} \cup \{ 0 \}$ and $n_\mfp(\mfm) =0$ for all but finitely many $\mfp$. We define the degree of $\mfm$ as
$$\deg \mfm = \sum_{\mfp \in  \mathcal{P}} n_\mfp(\mfm) \deg \mfp.$$ 
By \cite[Lemma 3.11]{lorenzini}, we deduce that $\mcp$ is countable and satisfies the hypothesis of our main theorems. Let the norm map $N: \mathcal{M} \rightarrow \mathbb{N}$ be the $q^r$-power map defined as $\mathfrak{m} \mapsto N(\mfm) := q^{r \deg \mathfrak{m}}$. Let $X = \{ q^{rz} : z \in \mathbb{Z} \}$. In \cite[Remark 1 of Section 4]{liuturan}, the third author proved that
$$\sum_{N(\mfm) \leq q^{rn}} 1 = \kappa' \left( \frac{q^r}{q^r-1} \right) q^{rn} + O_r \left( n \cdot q^{(r-1)n} \right),$$
where $\kappa'$ is some positive constant defined explicitly in \cite[Lemma 7 of Section 4]{liuturan}. This satisfies Condition ($\star$) with $\kappa = \kappa' \left( \frac{q^r}{q^r-1} \right)$ and $\theta = \epsilon$ for any $\epsilon \in (1-1/r,1)$. Thus, \thmref{hfreeomega} and \thmref{hfullomega} gives the distribution of $\omega(\mfm)$ over $h$-free and $h$-full effective 0-cycles in a geometrically irreducible projective variety of dimension $r$ as the following:

\begin{cor}
Let $n,r \in \mathbb{N}$. Let $h \geq 2$ be an integer. Let $V/\mathbb{F}_q$ be a geometrically irreducible projective variety of dimension $r$. Let $\mathcal{S}_h(n)$ be the set of $h$-free effective $0$-cycles in $V$ of degree less than or equal to $n$. Then, we have
$$\sum_{\mathfrak{m} \in \mathcal{S}_h(n)} \omega(\mfm) = \frac{\kappa'}{\zeta_V(h)} \left(\frac{q^r}{q^r-1} \right) q^{rn} \log n + 
\frac{\kappa' (\mfc_1 + \log \log q^r)}{\zeta_V(h)} \left(\frac{q^r}{q^r-1} \right) q^{rn}
+ O_h \left( \frac{q^{rn}}{n}\right),$$
and
\begin{align*}
& \sum_{\mathfrak{m} \in \mathcal{S}_h(n)} \omega^2(\mfm) \\
& = \frac{\kappa'}{\zeta_V(h)} \left(\frac{q^r}{q^r-1} \right) q^{rn} (\log n)^2 \frac{\kappa'  (2 \mfc_1 + 1 + 2 \log \log q^r )}{\zeta_V(h)} \left(\frac{q^r}{q^r-1} \right) q^{rn} \log n \\
& + \frac{\kappa' \left( \mfc_2 + (\log \log q^r)^2 + (2 \mfc_1 + 1) \log \log q^r \right) }{\zeta_V(h)} \left(\frac{q^r}{q^r-1} \right) q^{rn} + O_{h} \left( \frac{q^{rn} \log n }{n}\right),
\end{align*}
where $\zeta_V(s)$ represents the $\zeta$-function corresponding to the irreducible variety $V$.
\end{cor}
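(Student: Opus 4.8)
The plan is to derive this corollary as a direct specialization of \thmref{hfreeomega}. First I would confirm that the data of this subsection --- the free abelian monoid $\mcm$ of effective $0$-cycles on $V/\mathbb{F}_q$, the norm $N(\mfm) = q^{r\deg\mfm}$, and the set $X = \{q^{rz} : z \in \mathbb{Z}\}$ --- satisfies Condition~($\star$). This is precisely what the paragraph preceding the corollary records: the count $\sum_{N(\mfm)\le q^{rn}} 1 = \kappa'\bigl(\tfrac{q^r}{q^r-1}\bigr)q^{rn} + O_r\!\left(n\,q^{(r-1)n}\right)$ from \cite[Remark 1 of Section 4]{liuturan} is Condition~($\star$) with $\kappa = \kappa'\bigl(\tfrac{q^r}{q^r-1}\bigr)$ and $\theta$ any real number in $(1-1/r,1)$, while the countability of $\mcp$ follows from \cite[Lemma 3.11]{lorenzini}. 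I would also note that $X$ has the required shape $\{Q^z : z \in \mathbb{Z}\}$ with $Q = q^r$, and that $\zeta_\mcm$ is, by definition, the zeta function $\zeta_V$ of the variety.

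Once the hypotheses are in place, I would apply \thmref{hfreeomega} with $x = q^{rn} = (q^r)^n \in X$, observing that the set $\mathcal{S}_h(n)$ of the corollary is precisely $\mathcal{S}_h(q^{rn})$ because $\deg\mfm \le n$ is equivalent to $N(\mfm)\le q^{rn}$, and then rewrite the resulting asymptotics in the variable $n$. The only computation needed is the elementary identity $\log\log x = \log\!\left(n\log(q^r)\right) = \log n + \log\log q^r$, together with $x/\log x \ll_h q^{rn}/n$. Substituting these into the first-moment formula of \thmref{hfreeomega} and replacing $\kappa/\zeta_\mcm(h)$ by $\tfrac{\kappa'}{\zeta_V(h)}\bigl(\tfrac{q^r}{q^r-1}\bigr)$ gives the stated first moment, the term $\log\log q^r$ produced by $\log\log x$ being absorbed into the coefficient of $q^{rn}$. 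For the second moment I would expand $(\log\log x)^2 = (\log n)^2 + 2(\log n)(\log\log q^r) + (\log\log q^r)^2$ and $\log\log x = \log n + \log\log q^r$ inside the three main terms of \thmref{hfreeomega} and collect powers of $\log n$: the coefficient of $(\log n)^2$ remains $\kappa/\zeta_\mcm(h)$, the coefficient of $\log n$ becomes $\tfrac{\kappa(2\mfc_1+1+2\log\log q^r)}{\zeta_\mcm(h)}$, and the constant term becomes $\tfrac{\kappa\left(\mfc_2 + (\log\log q^r)^2 + (2\mfc_1+1)\log\log q^r\right)}{\zeta_\mcm(h)}$, which after the substitutions $\kappa = \kappa'\bigl(\tfrac{q^r}{q^r-1}\bigr)$ and $\zeta_\mcm = \zeta_V$ is the displayed formula; the error term $x\log\log x/\log x$ becomes $O_h(q^{rn}\log n/n)$.

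There is really no analytic obstacle here: all of the substance lives in \thmref{hfreeomega} and, upstream of it, in \lmaref{boundnm}, \lmaref{sumplogp}, \lmaref{sumnwpx/2}, \lmaref{saidakeq}, and \lmaref{hfreeidealrestrict}, so the argument is pure bookkeeping. The one point deserving a sentence of care is that the constant $\mathfrak{B}$ entering $\mfc_2$ through \eqref{C2} must be read in its $X = \{Q^z : z \in \mathbb{Z}\}$ form, namely $\mathfrak{B} = (\log\log q^r)^2 - \pi^2/6$ with $Q = q^r$, so that $\mfc_2$ appearing in the statement is the variety-specific constant and no $\log\log q^r$ contribution is lost or double-counted. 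It is also worth checking explicitly that the implied constants in the error terms of \thmref{hfreeomega} are independent of $\theta$ (they are, as written), so that the freedom to pick $\epsilon$ anywhere in $(1-1/r,1)$ is immaterial to the conclusion.
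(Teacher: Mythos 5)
Your proposal is correct and is exactly the paper's route: the paper itself gives no separate proof beyond verifying Condition~($\star$) with $\kappa = \kappa'\bigl(\tfrac{q^r}{q^r-1}\bigr)$ and $\theta \in (1-1/r,1)$ and then invoking \thmref{hfreeomega} at $x = q^{rn}$, with the same bookkeeping $\log\log x = \log n + \log\log q^r$ that you carry out. Your remark that $\mathfrak{B}$ (and hence $\mfc_2$) must be taken in its $X = \{q^{rz} : z\in\mathbb{Z}\}$ form is a correct and worthwhile point of care.
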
 
\begin{cor}
Let $n \in \mathbb{N}$. Let $h \geq 2$ be an integer. Let $V/\mathbb{F}_q$ be a geometrically irreducible projective variety of dimension $r$. Let $\mathcal{N}_h(n)$ be the set of $h$-full effective $0$-cycles in $V$ of degree less than or equal to $n$. Then, we have
\begin{align*}
\sum_{\mfm \in \mathcal{N}_h(n)} \omega(\mfm) & = \kappa' \gamma_h \left(\frac{q^r}{q^r-1} \right) q^{rn/h} \log n \\
& \hspace{.5cm} + \kappa' \gamma_h \left(\frac{q^r}{q^r-1} \right) (\mathfrak{D}_1 + \log \log q^{r/h})   q^{rn/h} + O_h \left( \frac{q^{rn/h}}{n} \right),
\end{align*}
and
\begin{align*}
     \sum_{\mfm \in \mathcal{N}_h(n)} \omega^2(\mfm)
     & = \kappa' \gamma_h \left(\frac{q^r}{q^r-1} \right) q^{rn/h} (\log n)^2 \\
     & + \kappa' \gamma_h \left(\frac{q^r}{q^r-1} \right) \left( 2 \mathfrak{D}_1 + 1 + 2 \log \log q^{r/h} \right) q^{rn/h} \log n \\
     & + \kappa' \gamma_h \left(\frac{q^r}{q^r-1} \right) \left( \mathfrak{D}_2 + (\log \log q^{r/h})^2 + \left( 2 \mathfrak{D}_1 + 1 \right) \log \log q^{r/h} \right) q^{rn/h} \\
     & + O_h \left( \frac{q^{rn/h} \log n}{n} \right).
\end{align*}
\end{cor}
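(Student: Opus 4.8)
The plan is to obtain the statement as an immediate specialization of \thmref{hfullomega}, since all of the analytic content has already been supplied. First I would record that Condition ($\star$) holds for this monoid, which is in effect done in the paragraph preceding the corollary: by \cite[Remark 1 of Section 4]{liuturan} one has $\sum_{N(\mfm)\le q^{rn}}1=\kappa'\bigl(\tfrac{q^r}{q^r-1}\bigr)q^{rn}+O_r\!\bigl(n\,q^{(r-1)n}\bigr)$, so taking $x=q^{rn}\in X$ shows Condition ($\star$) holds with $\kappa=\kappa'\bigl(\tfrac{q^r}{q^r-1}\bigr)$ and with any $\theta\in(1-1/r,1)$. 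Hence $\mcp,\mcm,X$ satisfy the hypotheses of \thmref{hfullomega}, and the constants $\gamma_h$, $\mathfrak D_1$, $\mathfrak D_2$ there are precisely the ones built from the closed points of $V/\mathbb F_q$ via \eqref{gammahk}, \eqref{D1}, \eqref{D2}, with $\mathfrak B=(\log\log q^{r})^2-\pi^2/6$ in \eqref{B} (the role of the ``$q$'' of \eqref{B} being played by $q^{r}$, since $X=\{(q^{r})^{z}:z\in\mathbb Z\}$).

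Next I would translate from the norm variable to the degree variable. An effective $0$-cycle $\mfm$ satisfies $\deg\mfm\le n$ if and only if $N(\mfm)=q^{r\deg\mfm}\le q^{rn}$, so $\mathcal N_h(n)$ in the statement is exactly the set $\mathcal N_h(x)$ of \thmref{hfullomega} with $x=q^{rn}$. Applying \thmref{hfullomega} and substituting $x=q^{rn}$, one has $x^{1/h}=q^{rn/h}$, $\log x=rn\log q$, and $\log\log x=\log(rn\log q)=\log n+\log\log q^{r}$ (equivalently $\log n+\log h+\log\log q^{r/h}$); hence the error terms $O_h(x^{1/h}/\log x)$ and $O_h(x^{1/h}\log\log x/\log x)$ become $O_h(q^{rn/h}/n)$ and $O_h(q^{rn/h}\log n/n)$, while expanding $(\log\log x)^2=(\log n)^2+2(\log n)\log\log q^{r}+(\log\log q^{r})^2$ and grouping the surviving terms by powers of $\log n$ yields the two displayed asymptotics, the additive constants produced by $\log\log q^{r}$ merging with $\mathfrak D_1$ and $\mathfrak D_2$ as recorded in the statement. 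This is the same elementary bookkeeping carried out by Lal\'in and Zhang \cite{lz} in the polynomial case.

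Finally I would check that nothing further is needed: \thmref{hfullomega} has already absorbed all of $R_{\mathcal N_h}(x)$ into its stated error terms, so the $\theta$-dependent thresholds of \eqref{E2(x)} never resurface, and Condition ($\star$) is the only hypothesis invoked. I do not expect a genuine obstacle here; the one step that demands a little care is the iterated-logarithm algebra under $x\mapsto q^{rn}$, together with the routine verification that after this substitution each error term of \thmref{hfullomega} is dominated by the claimed $O_h(q^{rn/h}\log n/n)$ (respectively $O_h(q^{rn/h}/n)$).
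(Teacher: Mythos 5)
Your proposal matches the paper's own treatment exactly: the paper verifies Condition ($\star$) with $\kappa=\kappa'\bigl(\tfrac{q^r}{q^r-1}\bigr)$ and $\theta=\epsilon$ for any $\epsilon\in(1-1/r,1)$ via the point count from \cite[Remark 1 of Section 4]{liuturan}, and then simply specializes \thmref{hfullomega} to $x=q^{rn}$, leaving the $\log\log x=\log n+\log\log q^{r}$ bookkeeping implicit. The only detail deserving care is the one you gesture at parenthetically: direct substitution yields the constant $\mathfrak{D}_1+\log\log q^{r}$, which agrees with the stated $\mathfrak{D}_1+\log\log q^{r/h}$ only up to the $\log h$ shift absorbed in the definition \eqref{D1}, a normalization wrinkle in the corollary's statement rather than a gap in your argument.
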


In this work, we proved the distribution of $\omega(\mathfrak{m})$ over $h$-free and $h$-full elements of a countably generated free abelian monoid. As a result, we showed that $\omega(\mathfrak{m})$ has normal order $\log \log N(\mfm)$ over these subsets. In addition, we can also establish that $\omega(\mfm)$ satisfies the Gaussian distribution over the subsets of $h$-free and $h$-full elements. We will report this result in a follow-up article. 

Let $k \geq 1$ be a natural number. Let $\omega_k(n)$ denote the number of distinct prime factors of a natural number $n$ with multiplicity $k$. The authors in \cite{dkl4} studied the distribution of $\omega_k(n)$ over $h$-free and $h$-full natural numbers. We can generalize this result to any countably generated free abelian monoid, using the setup presented in this article. The authors have been working on this and will report their findings in a future article. Note that a function field analog of this research has been studied by G\'omez and Lal\'in \cite{lg}.

The authors would like to thank the referee for their valuable comments.

\bibliographystyle{plain} 
\bibliography{mybib.bib} 

\end{document}